\newcommand\cyr{%
\renewcommand\rmdefault{wncyr}%
\renewcommand\sfdefault{wncyss}%
\renewcommand\encodingdefault{OT2}%
\normalfont
\selectfont}
\DeclareTextFontCommand{\textcyr}{\cyr}
\DeclareFontFamily{OT1}{rsfs}{}	
\DeclareFontShape{OT1}{rsfs}{n}{it}{<-> rsfs10}{}
\DeclareMathAlphabet{\fmathscr}{OT1}{rsfs}{n}{it}
\numberwithin{equation}{section}
\newtheorem{Theoremx}{Theorem}
\newtheorem{theorem}{Theorem}[section]
\newtheorem{lemma}[theorem]{Lemma}
\newtheorem{proposition}[theorem]{Proposition}
\newtheorem{corollary}[theorem]{Corollary}
\newtheorem{claim}[theorem]{Claim}
\newtheorem{question}{Question}
\theoremstyle{definition}
\newtheorem{definition}[theorem]{Definition}
\theoremstyle{remark}
\newcommand{\Ass}{\operatorname{Ass}}
\newcommand{\im}{\operatorname{Im}}
\renewcommand{\ker}{\operatorname{Ker}}
\newcommand{\Spec}{\operatorname{Spec}}
\newcommand{\Ht}{\operatorname{ht}}
\newcommand{\Height}{\operatorname{ht}}
\newcommand{\di}{\operatorname{div}}
\newcommand{\gr}{\operatorname{gr}}
\newcommand{\Mod}{\operatorname{Mod}}
\newcommand{\Ext}{\operatorname{Ext}}
\newcommand{\Hom}{\operatorname{Hom}}
\newcommand{\Ann}{\operatorname{Ann}}
\newcommand{\depth}{\operatorname{depth}}
\newcommand{\coker}{\operatorname{Coker}}
\newcommand{\Proj}{\operatorname{Proj}}
\newcommand{\e}{\operatorname{e_{HK}}}
\newcommand{\rank}{\operatorname{rank}}
\newcommand{\frk}{\operatorname{frk}}
\newcommand{\lcb}{\operatorname{lcb}}
\newcommand{\N}{\mathbb{N}}
\newcommand{\Q}{\mathbb{Q}}
\newcommand{\fm}{\mathfrak{m}}
\newcommand{\fp}{\mathfrak{p}}
\newcommand{\fa}{\mathfrak{a}}
\newcommand{\fb}{\mathfrak{b}}
\newcommand{\fn}{\mathfrak{n}}
\begin{document}
\title{Local cohomology bounds and test ideals}

\author[Ian Aberbach]{Ian Aberbach}
\address{Department of Mathematics, University of Missouri, Columbia, MO 65211, USA}
\email{aberbachi@missouri.edu}

\author[Thomas Polstra]{Thomas Polstra}
\thanks{Polstra was supported in part by NSF Postdoctoral Research Fellowship DMS $\#1703856$.}
\address{Department of Mathematics, University of Virginia, Charlottesville, VA 22903 USA}
\email{tp2tt@virginia.edu}


\begin{abstract} We find sufficient conditions which imply equality of the finitistic test ideal and test ideal in rings of prime characteristic. Utilizing recent progress from the minimal model program we equate the notions of $F$-regular and strongly $F$-regular for 4-dimensional rings essentially of finite type over a field of prime characteristic $p>5$.
\end{abstract}

 \maketitle


\section{Introduction}

Introduced and developed by Hochster and Huneke in \cite{HHStrongFregular, HHJAMS, HHsmall, HHPhantom, HHTAMS, HHJAG}, the theory of tight closure is a central topic in the study of Noetherian rings of prime characteristic $p>0$. Suppose $R$ is a Noetherian ring of prime characteristic $p>0$ and let $R^\circ$ be the set of elements which avoid all minimal primes of $R$. Let $I\subseteq R$ be an ideal of $R$ and denote by $I^{[p^e]}$ the expansion of $I$ along the $e$th iterate of the Frobenius endomorphism. The tight closure of $I$ is the ideal $I^*$ consisting of elements $x\in R$ such that there exists an element $c\in R^{\circ}$ with the property that $cx^{p^e}\in I^{[p^e]}$ for all $e\gg 0$. A defining problem of tight closure theory was the question of whether or not tight closure commutes with localization: If $W$ is a multiplicative set and $I\subseteq R$ an ideal, is $I^*R_W=(IR_W)^*$? There are scenarios when tight closure does commute with localization, e.g., \cite{AHH} and \cite{Yaofinite}. However, there exist hypersurfaces for which tight closure does not commute with localization, \cite{BrennerMonsky}.  Brenner's and Monsky's counterexample to the localization problem leaves open the intriguing problem if the property of tight closure being a trivial operation on ideals commutes with localization.

Continue to let $R$ be a Noetherian ring of prime characteristic $p>0$. The ring $R$ is called \emph{weakly $F$-regular} if every ideal is tight closed, that is $I=I^*$ for every ideal $I$.\footnote{A defining property of tight closure theory is that every regular ring is weakly $F$-regular.} A ring is called \emph{$F$-regular} if every localization of $R$ is weakly $F$-regular. Let $F^e_*R$ denote the restriction of scalars of $R$ along the $e$th iterate Frobenius endomorphism $F^e:R\to R$. We say that $R$ is \emph{strongly $F$-regular} if for each nonzero element $c\in R$ there exists $e\in \N$ such that the $R$-linear map $R\to  F^e_*R$ defined by $1\mapsto F^e_*c$ is pure. Every strongly $F$-regular ring is weakly $F$-regular and the property of being strongly $F$-regular passes to localization. It is conjectured that all three notions of $F$-regularity agree. Utilizing recent progress from the prime characteristic minimal model program, \cite{DasWaldronArxiv}, our main contribution towards this problem is the following:

\begin{Theoremx}\label{Main theorem dimension 4} Let $(R,\fm,k)$ be a ring of Krull dimension no more than $4$ which is essentially of finite type over a field of prime characteristic $p>5$. If $R$ is $F$-regular then $R$ is strongly $F$-regular.
\end{Theoremx}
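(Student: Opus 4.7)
The plan is to reduce the theorem, via localization on $\Spec(R)$, to the paper's main technical result on the equality of the finitistic test ideal and the test ideal, using the recent minimal model program input of Das--Waldron to handle the non-maximal primes.

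First, I would pass to the local case: if the statement holds locally, then strong $F$-regularity (which is a local property) follows globally, so assume $(R,\fm,k)$ is local of dimension at most $4$. Since $F$-regularity passes to localizations, every $R_\fp$ with $\fp\in\Spec(R)$ is weakly $F$-regular. For any non-maximal prime $\fp$, the ring $R_\fp$ is essentially of finite type over a field of characteristic $p>5$ and has Krull dimension at most $3$. The Das--Waldron results \cite{DasWaldronArxiv} invoked in the paper imply that in this dimension range, weakly $F$-regular and strongly $F$-regular coincide, so $R_\fp$ is strongly $F$-regular for every $\fp\neq\fm$. Consequently, the non-strongly-$F$-regular locus of $\Spec(R)$ is contained in $\{\fm\}$.

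Next, I would apply the paper's sufficient condition for the equality of the test ideal $\tau(R)$ and the finitistic test ideal $\tau^{\mathrm{fg}}(R)$. The situation produced by the previous paragraph—strong $F$-regularity failing only at the closed point—is exactly the kind of input the paper's local cohomology bounds are designed to exploit: the discrepancy between $\tau(R)$ and $\tau^{\mathrm{fg}}(R)$ is measured by tight-closure data supported at $\fm$, and the criterion forces $\tau(R)=\tau^{\mathrm{fg}}(R)$. Because $R$ is weakly $F$-regular, $\tau^{\mathrm{fg}}(R)=R$, and hence $\tau(R)=R$; this means precisely that $R$ is strongly $F$-regular, finishing the proof.

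The main obstacle is not the reduction above but the criterion $\tau(R)=\tau^{\mathrm{fg}}(R)$ itself. Given only that strong $F$-regularity can fail at $\fm$, one has to show that the test ideal, which is a priori controlled by tight-closure containments in possibly non-finitely generated modules (such as the local cohomology modules $H^i_\fm(R)$), is already detected by finitely generated modules. This is where the local cohomology bounds of the title enter, and carrying them out without Cohen--Macaulay hypotheses is the essential technical difficulty; the dimension-$4$ theorem is then a clean corollary once the MMP input rules out non-strong-$F$-regular points of positive dimension.
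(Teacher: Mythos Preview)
Your outline has the right shape---localize, handle the punctured spectrum, then invoke the paper's criterion for $\tau(R)=\tau_{fg}(R)$---but there is a genuine gap in how you connect these pieces.

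The paper's criterion (Theorem~B / Corollary~\ref{Main prime characteritic corollary}) does \emph{not} take as input ``strongly $F$-regular on the punctured spectrum.'' Its hypothesis is that some symbolic power of the \emph{anti-canonical ideal} has analytic spread at most~$2$ at every non-closed point (together with $\Q$-Gorenstein in codimension~$2$, etc.). Knowing that $R_\fp$ is strongly $F$-regular for $\fp\neq\fm$ does not, by itself, give this: a strongly $F$-regular local ring need not have Noetherian anti-canonical algebra, so there is no automatic bound on the analytic spread of $K_1^{(N)}$. You have to \emph{produce} the analytic spread~$\leq 2$ condition, and your proposal never does.

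This is also where you have misplaced the role of Das--Waldron. The equivalence of weak and strong $F$-regularity in dimension~$\leq 3$ is Williams' theorem, not Das--Waldron. The Das--Waldron input is used one step later and for a different purpose: at each height-$3$ prime $P$, the (now strongly $F$-regular) ring $R_P$ admits a boundary $\Delta$ making $(\Spec R_P,\Delta)$ KLT, and \cite[Corollary~1.12]{DasWaldronArxiv} then says the anti-canonical algebra of $R_P$ is Noetherian. Via Theorem~\ref{Criterion for finite generation} this is exactly the statement that some symbolic power of the anti-canonical ideal has analytic spread~$\leq 2$ on the punctured spectrum, which is what Theorem~B actually needs. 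Once you insert this step, the argument is essentially the paper's proof (Corollary~\ref{$F$-regular implies strong dimension 4}); without it, the appeal to the paper's criterion is unjustified.
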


A key ingredient to the proof of Theorem~\ref{Main theorem dimension 4} is that $3$-dimensional strongly $F$-regular rings essentially of finite type over a field of prime characteristic $p>5$ have Noetherian anti-canonical algebra. More specifically, Das and Waldron have proven every $3$-dimensional local KLT singularity pair essentially of finite type over a field of prime characteristic $p>5$ has Noetherian anti-canonical algebra, \cite[Corollary~1.12]{DasWaldronArxiv}. If $(R,\fm,k)$ is a local strongly $F$-regular ring then there exists an effective boundary divisor $\Delta$ such that $(\Spec(R),\Delta)$ is globally $F$-regular (or just $F$-regular since $\Spec(R)$ is affine), \cite[Corollary~6.9]{SchwedeSmith}, and therefore has KLT singularities by \cite[Theorem~3.3]{HaraWatanabe}. Hence every $4$-dimensional $F$-regular ring essentially of finite type over a field of prime characteristic $p>5$ satisfy the hypotheses of Theorem~\ref{Main theorem dimension 4} and therefore are strongly $F$-regular. Moreover, every weakly $F$-regular ring finite type over a field with infinite trancendence degree over $\mathbb{F}_p$ with $p>5$ and of Krull dimension no more than $4$ is now known to be strongly $F$-regular, see \cite[Theorem~8.1]{HHTAMS}.

There has been tremendous effort to equate the various notions of $F$-regularity since the theory of tight closure was introduced. Efforts to equate at least two of the notions of $F$-regularity have typically required making desirable geometric assumptions on the ring $R$. For example:

\begin{enumerate}
\item Hochster and Huneke showed weak implies strong for Gorenstein rings, \cite{HHStrongFregular};
\item Weakly $F$-regular is equivalent to strongly $F$-regular whenever $R$ is $\N$-graded over a field by work on Lyubeznik and Smith, \cite{LyubeznikSmith};
\item MacCrimmon showed weakly $F$-regular is equivalent to strongly $F$-regular if $R$ is assumed to be $\mathbb{Q}$-Gorenstein at non-maximal points of $\Spec(R)$, \cite{Maccrimmon};
\item Murthy proved that weakly $F$-regular and $F$-regular are equivalent conditions whenever the ring $R$ is finite type over an uncountable field, see \cite[Theorem~12.2]{HunekeBook}. Hochster and Huneke extended Murthy's result in \cite[Theorem~8.1]{HHTAMS} to rings finite type over a field which has infinite transcendence degree over its prime field $\mathbb{F}_p$;
\item The conjectured equivalence of weak and strong follows by an unpublished result of Singh, provided the symbolic Rees ring associated with an anti-canonical ideal is Noetherian, see \cite[Corollary~5.9]{CEMS}. See also \cite{Aberbach2002} for a related assumption on the anti-canonical ideal from which validity of the weak implies strong conjecture can be derived. 
\end{enumerate}

There has been limited progress on equating the various notions of $F$-regularity without making conjecturely unnecessary assumptions. Williams' theorem, see \cite{Williams}, equates the notions of weakly $F$-regular and strongly $F$-regular for rings of dimension no more than $3$. Williams proof of the weak implies strong conjecture in dimension $3$ relies on Lipman's theorem that the divisor class groups of the local rings of an excellent surface with at worst rational singularities is finite, \cite{LipmanRationalSingularities}. Specifically, Williams uses that the canonical class of a local three dimensional weakly $F$-regular ring is torsion on the punctured spectrum, an assumption MacCrimmon imposed on larger dimensional rings in \cite{Maccrimmon} in order to extend William's methodology to a large class of rings of arbitrarily large dimension. Lipman's theorem on divisor class groups requires an understanding of minimal resolutions of rational surface singularities by quadratic transforms.

In the spirit of MacCrimmon's theorem, we do not limit ourselves to low dimensions. We instead impose desirable geometric conditions on higher dimensional rings to find a new and interesting class of rings for which the finitistic test ideal and the test ideal agree. To motivate our most general result we rephrase MacCrimmon's theorem from \cite{Maccrimmon} in terms of analytic spread of an anti-canonical ideal. Let $(R,\fm,k)$ be a normal Cohen-Macaulay domain of prime characteristic $p>0$ and $\omega_R\cong J_1\subsetneq R$ a canonical ideal. The ideal $J_1$ is of pure height $1$ and so there exists an element $a\in R$ such that $a$ generates $J_1$ at its components. We can then write $(a)=J_1\cap K_1$ where $K_1$ is pure height one and the components of $K_1$ are disjoint from the components of $J_1$. The ideal $K_1$ is an \emph{anti-canonical ideal} of $R$ and is the inverse of $J_1$ as an element of the divisor class group of $R$. To assume $R$ is $\Q$-Gorenstein on the punctured spectrum is equivalent to assuming that for some natural number $N\geq 1$ the the $N$th symbolic power of $K_1$,  $K_1^{(N)}$, has analytic spread $1$ on the punctured spectrum.

We recover MacCrimmon's result by proving every weakly $F$-regular ring is strongly $F$-regular under the milder hypothesis that some symbolic power of an anti-canonical ideal has analytic spread at most $2$ on the punctured spectrum.


\begin{Theoremx}\label{Main tight closure theorem} Let $(R,\fm,k)$ be a local Cohen-Macaulay normal domain of prime characteristic $p$, Krull dimension $d$, and $\Q$-Gorenstein in codimension $2$. Suppose further that $R$ has a canonical ideal and that some symbolic power of the corresponding anti-canonical ideal has analytic spread at most $2$ on the punctured spectrum. Let $E_R(k)$ be the injective hull of the residue field. Then the finitistic tight closure and the tight closure of the zero submodule of $E_R(k)$ agree. In particular, the finitistic test ideal and the test ideal of $R$ agree. Therefore if $R$ is weakly $F$-regular then $R$ is strongly $F$-regular.
\end{Theoremx}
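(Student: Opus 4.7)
The plan is to establish the equality $0^*_{E_R(k)} = 0^{*fg}_{E_R(k)}$; by Matlis duality this translates to the equality of the test ideal and the finitistic test ideal, and the final assertion that weakly $F$-regular implies strongly $F$-regular is then the standard consequence that $R$ is weakly (respectively strongly) $F$-regular precisely when the finitistic test ideal (respectively the test ideal) is the unit ideal. Since $R$ is Cohen--Macaulay with canonical ideal $J_1 \cong \omega_R$, I would work with the explicit model $E_R(k) \cong H^d_{\fm}(J_1)$, representing elements of $E_R(k)$ via top local cohomology on a system of parameters.

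The key technical device is the canonical--anti-canonical duality for normal rings, which identifies $\Hom_R(F^e_* R, R) \cong F^e_* K_1^{(p^e - 1)}$ for every $e \geq 1$. Thus the supply of $R$-linear maps $F^e_* R \to R$ available for enlarging the finitistic test ideal to the full test ideal is controlled by the symbolic powers of the anti-canonical ideal $K_1$. MacCrimmon's hypothesis, that some symbolic power $K_1^{(N)}$ is principal on the punctured spectrum, amounts to analytic spread at most $1$ at every non-maximal prime; the present task is to run a similar argument under the genuinely weaker assumption that $K_1^{(N)}$ has analytic spread at most $2$ away from $\fm$.

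Given $\eta \in 0^*_{E_R(k)}$ with a fixed tight closure multiplier $c \in R^\circ$, my strategy proceeds in three steps. First, I would choose elements $x, y \in K_1^{(N)}$ whose images generate a minimal reduction of $K_1^{(N)} R_\fp$ at every non-maximal prime $\fp$, using prime avoidance applied to the special fiber ring of the symbolic Rees algebra; at primes of height at most $2$ the $\Q$-Gorenstein in codimension $2$ hypothesis further lets me replace $(x,y)$ by a single generator, locally reducing to MacCrimmon's setting. Second, I would use the two-element reduction together with a \v{C}ech-type computation on $(x,y)$ to exhibit $\eta$ inside a finitely generated submodule $M \subseteq E_R(k)$ for which the test-element verification for $c$ remains valid. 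The Cohen--Macaulay hypothesis guarantees the relevant Koszul complex is acyclic in the range needed, so that the multiplier $c$ is not destroyed during the passage from $E_R(k)$ to $M$.

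The main obstacle I anticipate is patching the local two-element reductions globally: when the analytic spread drops to $1$ a single generator trivializes the bookkeeping, but with analytic spread $2$ one must work with $x$ and $y$ simultaneously and show that a certain obstruction class in $H^2$ of the symbolic Rees module of $K_1$ vanishes on the punctured spectrum. This is precisely a ``local cohomology bound'' in the sense of the paper's title, and establishing it under the Cohen--Macaulay and $\Q$-Gorenstein in codimension $2$ hypotheses is the central difficulty; once the bound is in place, the rest of the argument proceeds by the usual Matlis dualization of test element calculations.
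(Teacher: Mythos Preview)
Your outline identifies several correct ingredients---the two-element reduction of $K_1^{(N)}$ on the punctured spectrum, the relevance of some kind of ``local cohomology bound,'' and the reduction to showing $0^*_{E_R(k)}=0^{*,fg}_{E_R(k)}$---but the mechanism you describe does not match the paper's, and more importantly the proposal leaves the central step unexplained.

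The paper does \emph{not} work through the Cartier-algebra identification $\Hom_R(F^e_*R,R)\cong F^e_*K_1^{(p^e-1)}$, nor through any ``obstruction class in $H^2$ of the symbolic Rees module.'' Instead it translates the problem, via Lemma~\ref{Lemma Equiv way to check weak and strong}, to showing $0^*_{H^{d-1}_\fm(R/J_1)}=0^{*,fg}_{H^{d-1}_\fm(R/J_1)}$, and then studies the Koszul-cohomology direct limit computing $H^{d-1}_\fm(R/J_1^{(mi+1)})$ for varying $i$. The ``local cohomology bound'' of the title is a quantitative statement (Definition~\ref{Key definition}): a bound, linear in $i$, on how many steps in the direct limit system are needed before a class representing zero actually becomes zero. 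This is obtained by computing $\Ext^{h+1}_S(R/J_1^{mi+1},S)\cong K^{(i)}/x^iJ_1$ and then controlling the higher $\Ext^{h+j}_S$ of this module via the two-element reduction $(a,c)$ of $K$ (Theorem~\ref{theorem when are technical condtions met}); the key algebraic fact is the intersection identity $a(a,c)^{i-1}K\cap c^iK=(ac^i)$, which decomposes the relevant Ext modules into copies of $\Ext^{h+j+1}_S(R/K,S)$.

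Two concrete gaps in your proposal: (1) You never use that the reduction $(a,c)$ can be taken with \emph{reduction number one} on the punctured spectrum. The paper needs this (Theorem~\ref{Linquan's Theorem}, via strong $F$-regularity on the punctured spectrum established from the Noetherian anti-canonical algebra there), and without it the Ext decomposition above collapses. (2) Your ``\v{C}ech-type computation on $(x,y)$'' and the claim that the Cohen--Macaulay hypothesis makes the relevant Koszul complex acyclic are not enough: the modules $R/J_1^{(mi+1)}$ are typically \emph{not} Cohen--Macaulay, so one must instead compare them to their $S_2$-ifications $\Ext^{h+1}_S(\Ext^{h+1}_S(R/J_1^{mi+1},S),S)$ and track the discrepancy, which is where the $\Q$-Gorenstein-in-codimension-$2$ hypothesis enters (Corollary~\ref{Inclusion lemma corollary}). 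As written, your sketch does not contain a workable route to the required linear bound.
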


Techniques introduced in this article allow us to equate the finitistic tight closure and the tight closure of the zero submodule of the injective hull of the residue field through a careful analysis of the maps of Koszul cohomologies defining certain local cohomology modules. Our analysis of local cohomology is centered around the notion of a \emph{local cohomology bound} defined in Section~\ref{lcb introduction}.

 The relationship between $F$-signature and relative Hilbert-Kunz multiplicity is also explored.  See \cite{HunekeHK, PolstraTucker} for introductions to the theory of prime characteristic invariants, such as Hilbert-Kunz multiplicity and $F$-signature, in local rings. The $F$-signature of an $F$-finite ring\footnote{The ring $R$ is $F$-finite if $F^e_*R$ is a finitely generated module for some , equivalently each, $e\in \mathbb{N}$.} is the limit
\[
\lim_{e\to \infty}\frac{\frk(F^e_*R)}{\rank(F^e_*R)}
\] 
where $\frk(F^e_*R)$ is the largest rank of an $R$-free summand appearing in a direct sum decomposition of $F^e_*R$. The invariant $F$-signature was shown to exist under the local hypothesis in \cite{Tucker}. If $(R,\fm,k)$ is local of Krull dimension $d$ then the Hilbert-Kunz multiplicity of an $\fm$-primary ideal $I$ is the limit
\[
\e(I)=\lim_{e\to \infty}\frac{\lambda(R/I^{[p^e]})}{p^{ed}}
\]
where $\lambda(R/I^{[p^e]})$ denotes the length of $R/I^{[p^e]}$. Existence of Hilbert-Kunz multiplicity was established by Monsky, \cite{Monsky}.

 If $(R,\fm,k)$ is local, $I$ an $\fm$-primary ideal, and $u\not\in I$ then 
\[
s(R)\leq \e(I)-\e((I,u)),
\]
\cite[Proposition~15]{HunekeLeuschke}. Watanabe and Yoshida explored the notion of minimal realtive Hilbert-Kunz multiplicity and its relation with $F$-signature in \cite{WY}. They suspected that the $F$-signature of $R$ is realized as the minimum of all relative Hilbert-Kunz multiplicities. For example, if $(R,\fm,k)$ is Gorenstein, $I$ a parameter ideal, and $u\in R$ generates the socle mod $I$, then $s(R)=\e(I)-\e(I,u)$ by \cite[Theorem~11]{HunekeLeuschke}. More generally, it is known that the $F$-signature of a local ring agrees with the infimum of all relative Hilbert-Kunz multiplicities by work of the second author and Tucker in \cite[Theorem~A]{PolstraTucker}.

Relating $F$-signature with relative Hilbert-Kunz multiplicities is closely connected with the weak implies strong conjecture. Under mild hypotheses, a local ring $R$ is weakly $F$-regular if and only if $\e(I)-\e((I,u))>0$ for every $\fm$-primary ideal $I$ and $u\in R-I$ by \cite[Proposition~4.16 and Theorem~8.17]{HHJAMS}, and $R$ is strongly $F$-regular if and only if $s(R)>0$ by \cite{AberbachLeuschke}. In particular, if it is known that $F$-signature of a weakly $F$-regular ring can be realized as a relative Hilbert-Kunz multiplicity then the conjecture of weak implies strong would follow. The techniques of this article are used to equate $F$-signature with a relative Hilbert-Kunz multiplicity for strongly $F$-regular rings which satisfy the hypotheses of Theorem~\ref{Main tight closure theorem}.\footnote{The only property in the hypotheses of Theorem~\ref{Main tight closure theorem} which is not enjoyed by every strongly $F$-regular ring is the property that some symbolic power of the anti-canonical ideal has analytic spread at most $2$ on the punctured spectrum.}

\begin{Theoremx}\label{Main theorem prime char invariants}
Let $(R,\fm,k)$ be a local $F$-regular and $F$-finite ring of prime characteristic $p>0$ such that some symbolic power of the anti-canonical ideal has analytic spread at most $2$ on the punctured spectrum. There exists an irreducible $\fm$-primary ideal $I$ and socle generator $u$ mod $I$ such that 
\[
a_e(R)=\frac{\lambda(R/I^{[p^e]})}{p^{e\dim(R)}}-\frac{\lambda(R/(I,u)^{[p^e]})}{p^{e\dim(R)}}
\]
for every $e\in \N$. In particular,
\[
s(R)=\e(I)-\e((I,u)).
\]
\end{Theoremx}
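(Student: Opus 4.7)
The plan is to produce a specific irreducible $\fm$-primary ideal $I$ with socle generator $u$ for which the standard Huneke--Leuschke upper bound
\[
a_e(R)\,p^{e\dim R}\leq \lambda_R(R/(I^{[p^e]}:_R u^{p^e}))=\lambda_R(R/I^{[p^e]})-\lambda_R(R/(I,u)^{[p^e]})
\]
becomes an equality for every $e\in\N$. Once this per-$e$ equality is in hand, dividing by $p^{e\dim R}$ and letting $e\to\infty$ converts it into $s(R)=\e(I)-\e((I,u))$ directly from the definitions of $F$-signature and Hilbert--Kunz multiplicity.

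I would begin by translating the problem to the injective hull $E=E_R(k)$. Realize $E=\varinjlim_n R/I_n$ along a strictly descending chain of irreducible $\fm$-primary ideals $\{I_n\}$ with $\bigcap_n I_n=(0)$, with compatible socle generators $u_n$ of each $R/I_n$ whose images share a single socle generator $u_\infty\in E$. Under Matlis duality, $R/(I_n^{[p^e]}:_R u_n^{p^e})$ parametrizes $R$-linear maps $F^e_*R\to E$ sending $F^e_*1$ into the cyclic submodule generated by $F^e_*u_\infty$; the Huneke--Leuschke bound is sharp exactly when each such map factors through a direct-summand projection $F^e_*R\twoheadrightarrow R$, equivalently, when none of these maps represents a class in the finitistic tight-closure obstruction sitting inside a Frobenius twist of $E$.

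The essential input is Theorem~\ref{Main tight closure theorem}: its hypothesis is the hypothesis of the present theorem, and combined with the $F$-regularity of $R$ it forces $0^*_E=0^{*\mathrm{fg}}_E=0$. What is required, however, is not merely this qualitative vanishing but a uniform version across all Frobenius levels. The local cohomology bound technology driving Theorem~\ref{Main tight closure theorem} supplies such a strengthening: there is an index $N$ such that for every $n\geq N$ and every $e\in\N$, every element of $R/(I_n^{[p^e]}:_R u_n^{p^e})$ arises from a genuine free-summand projection of $F^e_*R$. Setting $(I,u):=(I_N,u_N)$ delivers the per-$e$ equality, and the second assertion follows by the limit argument above. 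The principal difficulty is precisely this uniformity in $e$: eliminating the finitistic obstruction for any fixed $e$ is easy from the tight-closure vanishing alone, but producing a single pair $(I,u)$ that works simultaneously for all $e$ requires the quantitative local cohomology bounds rather than a naive diagonal argument.
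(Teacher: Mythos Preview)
Your overall strategy matches the paper's: realize $E_R(k)$ as $\varinjlim_t R/I_t$ along irreducible ideals with compatible socle generators $u_t$, recall that the ascending chain $\{(I_t^{[p^e]}:u_t^{p^e})\}_t$ stabilizes at the Frobenius degeneracy ideal $I_e$, and then argue that a fixed index works uniformly in $e$. Where your proposal becomes imprecise is exactly at the step you flag as the principal difficulty, and the mechanism you sketch is not quite what actually happens.

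The local cohomology bound machinery behind Theorem~\ref{Main tight closure theorem} does \emph{not} directly produce an index $N$ at which the chain stabilizes for all $e$. What it yields (tracing through Theorem~\ref{theorem when are technical condtions met}, Propositions~\ref{proposition technical lcb} and~\ref{proposition on how to make test ideals agree}, and the proof of Theorem~\ref{theorem how to make test ideals agree using lcbs}) is only a \emph{near}-stabilization: there is a fixed nonzero element $c\in R^\circ$ (concretely $c=x_1^{4m-1}$) and a fixed index $N$ (concretely $N=3$ after adjusting parameters) such that
\[
(I_N^{[p^e]}:u_N^{p^e})\;\subseteq\;I_e\;\subseteq\;(I_N^{[p^e]}:u_N^{p^e}):c
\]
for every $e$. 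This is all the uniform lcb input buys you, and it is not yet an equality.

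The missing step, which your proposal does not isolate, is a short tight closure argument that removes the colon by $c$. Given $r\in I_e$, the compatibility $I_e^{[p^{e_0}]}\subseteq I_{e+e_0}$ together with the displayed containment gives $c\,(u_N^{p^e}r)^{p^{e_0}}\in (I_N^{[p^e]})^{[p^{e_0}]}$ for all $e_0$, so $u_N^{p^e}r\in (I_N^{[p^e]})^*$. Now the hypothesis that $R$ is $F$-regular (so every ideal is tightly closed) forces $u_N^{p^e}r\in I_N^{[p^e]}$, i.e.\ $r\in (I_N^{[p^e]}:u_N^{p^e})$. Thus $I_e=(I_N^{[p^e]}:u_N^{p^e})$, which is the per-$e$ equality you want. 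Your appeal to $0^*_E=0^{*\mathrm{fg}}_E=0$ is morally related, but the actual argument uses $F$-regularity in this more concrete form on the specific ideals $I_N^{[p^e]}$; the Matlis-dual/free-summand framing you describe does not by itself supply this last reduction.
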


Section~\ref{background section} is devoted to background and preliminary results. Central to this paper is the notion of a local cohomology bound. Local cohomology bounds are of independent interest and are defined and discussed in Section~\ref{lcb introduction}. Section~\ref{section weak implies strong method} is the technical heart of the paper and is where  proofs of Theorem~\ref{Main theorem dimension 4} and Theorem~\ref{Main tight closure theorem} are given. The proof of Theorem~\ref{Main theorem prime char invariants} is found in Section~\ref{section $F$-signature and relative Hilbert-Kunz multiplicity}. In Section~\ref{section open problems} we list some open problems of interest.

\section{Background and preliminary results}\label{background section}

\subsection{Tight closure}
 Let $R$ be a ring of prime characteristic $p>0$ and let $R^\circ$ be complement of the union of the minimal primes of $R$. The $e$th Frobenius functor, or the $e$th Peskine-Szpiro functor, is the functor $F^e:\Mod(R)\to \Mod(R)$ obtained by extending scalars along the $e$th iterate of the Frobenius endomorphism. If $N\subseteq M$ are $R$-modules and $m\in M$, then $m$ is in the tight closure of $N$ relative to $M$ if there exists a $c\in R^\circ$ such that for all $e\gg0$ the element $m$ is in the kernel of the following composition of maps:
  \[
 M\to M/N\to F^e(M/N)\xrightarrow{\cdot c} F^e(M/N).
 \]
 In particular, if we consider an inclusion of $R$-modules of the form $I\subseteq R$ then $F^e(R/I)\cong R/I^{[p^e]}$ where $I^{[p^e]}=(r^{p^e}\mid r\in I)$, and an element $r\in R$ is in the tight closure of $I$ relative to $R$ if there exists a $c\in R^\circ$ such that $cr^{p^e}\in I^{[p^e]}$ for all $e\gg 0$.  The tight closure of the module $N$ relative to the module $M$ is denoted $N^*_{M}$. In the case that $M=R$ and $N=I$ is an ideal then we denote the tight closure of $I$ relative to $R$ as $I^*$. We say that $N$ is tightly closed in $M$ if $N=N^*_M$. If an ideal is tightly closed in $R$ then we simply say that the ideal is tightly closed. The finitistic tight closure of $N\subseteq M$ is denoted $N^{*,fg}_M$ and is the union of $(N\cap M')^*_{M'}$ where $M'$ runs over all finitely generated submodules of $M$.

The notions of weak $F$-regularity and strong $F$-regularity can be compared by studying the finitistic tight closure and tight closure of the zero submodule of the injective hull of a local ring by \cite[Proposition~8.23]{HHJAMS} and \cite[Proposition~7.1.2]{SmithThesis}. Suppose that $(R,\fm,k)$ is complete local and $E_R(k)$ is the injective hull of the residue field. The finitistic test ideal of $R$ is $\tau_{fg}(R)=\bigcap_{I\subseteq R} \Ann_R(I^*/I)$ and agrees with $\Ann_R(0^{*,fg}_{E_R(k)})$. The test ideal of $R$ is $\tau(R)=\bigcap_{N\subseteq M\in \Mod(R)}\Ann_R(N^*_M/N)$ and agrees with $\Ann_R(0^*_{E_R(k)})$. The ring $R$ is weakly $F$-regular if and only if $\tau_{fg}(R)=R$ and $R$ is strongly $F$-regular if and only if $\tau(R)=R$. Thus to prove the conjectured equivalence of weak and strong $F$-regularity it is enough to show $0^*_{E_R(k)}=0^{*,fg}_{E_R(k)}$ under hypotheses satisfied by rings which are weakly $F$-regular.

To explore the tight closure of the zero submodule of $E_R(k)$ we exploit the structure of $E_R(k)$ as direct limit of $0$-dimensional Gorenstein quotients of $R$ described in \cite{HochsterPurity}. Suppose $(R,\fm,k)$ is a complete local Cohen-Macaulay domain of Krull dimension $d$ and $J_1\subsetneq R$ a canonical ideal. Let $0\neq x_1\in J_1$, $x_2,\ldots,x_d\in R$ a parameter sequence on $R/J_1$, and for each $t\in \N$ let $I_t=(x_1^{t-1}J_1,x_2^t,\ldots,x_d^t)$. The sequences of ideals $\{I_t\}$ form a decreasing sequence of irreducible $\fm$-primary ideals cofinal with $\{\fm^t\}$. Moreover, the direct limit  system $\varinjlim R/I_t\xrightarrow{\cdot x_1\cdots x_d}R/I_{t+1}$ is isomorphic to $E_R(k)$. There is flexibility in choosing parameters when realizing the injective hull as a direct limit just described and it will be beneficial to choose the parameter sequence to satisfy some additional properties.

\begin{definition} Let $(R, \fm, k)$ be a local catenary domain of dimension $d$, and let $J$ be an ideal of $R$ of pure height $1$.  We say that the sequence of elements $x_1, \ldots, x_d \in \fm$ is {\it suitable with respect to $J$}  (or merely {\it suitable}, if $J$ is clear) if
\begin{enumerate}
\item $x_1, \ldots, x_d$ is a system of parameter for $R$,
\item $x_ 1 \in J$ and $x_2, \ldots, x_d$ are parameters for $R/J$,
\item if $J_P$ is principal for all minimal primes of $J$, then $J_{x_2}$ is principal,
\item if $J$ is principal in codimension $2$, then $J_{x_3}$ is principal.
\end{enumerate}
\end{definition}

Observe that if $J\subseteq R$ is an ideal of pure height $1$ which is principal in codimension $2$ then there exists a parameter sequence which is suitable with respect to $J$.

\begin{lemma}\label{direct limit of finitisitc tight closure} Let $(R,\fm,k)$ be a complete Cohen-Macaulay local ring of prime characteristic $p>0$ and of Krull dimension $d$. Let $J_1\subsetneq R$ be a choice of canonical ideal and $x_1,\ldots, x_d$ a suitable system of parameters. Make the following identifications of $E_R(k)$ and $H^{d-1}_\fm(R/J_1)$:
\[
E_R(k)\cong \varinjlim \left(\frac{R}{(x_1^{t-1}J_1,x_2^t,\cdots x_d^t)}\xrightarrow{\cdot x_1\cdots x_d}\frac{R}{(x_1^{t}J_1,x_2^{t+1},\cdots x_d^{t+1})}\right)
\]
\[
H^{d-1}_\fm(R/J_1)\cong \varinjlim \left(\frac{R}{(J_1,x_2^t,\cdots x_d^t)}\xrightarrow{\cdot x_2\cdots x_d}\frac{R}{(J_1,x_2^{t+1},\cdots x_d^{t+1})}\right)
\]
Then under the above identifications of $E_R(k)$ and $H^{d-1}_\fm(R/J_1)$ we have that
\[
0^{*,fg}_{E_R(k)}\cong \varinjlim \left(\frac{(x_1^{t-1}J_1,x_2^t,\cdots x_d^t)^*}{(x_1^{t-1}J_1,x_2^t,\cdots x_d^t)}\xrightarrow{\cdot x_1\cdots x_d}\frac{(x_1^{t}J_1,x_2^{t+1},\cdots x_d^{t+1})^*}{(x_1^{t}J_1,x_2^{t+1},\cdots x_d^{t+1})}\right)
\]
and
\[
0^{*,fg}_{H^{d-1}_\fm(R/J)}\cong \varinjlim \left(\frac{(J_1,x_2^t,\cdots x_d^t)^*}{(J_1,x_2^t,\cdots x_d^t)}\xrightarrow{\cdot x_2\cdots x_d}\frac{(J_1,x_2^{t+1},\cdots x_d^{t+1})^*}{(J_1,x_2^{t+1},\cdots x_d^{t+1})}\right).
\]
\end{lemma}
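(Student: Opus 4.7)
The plan is to reduce to computing the tight closure of zero in each of the finitely generated submodules $R/I_t$ with $I_t = (x_1^{t-1}J_1, x_2^t, \ldots, x_d^t)$, and to check that the structure maps of the direct limit representation of $E_R(k)$ restrict to the claimed maps on tight closures. By construction $E_R(k) = \varinjlim R/I_t$ with injective transition maps, so every finitely generated submodule of $E_R(k)$ is contained in the image of some $R/I_t$; hence the subsystem $\{R/I_t\}$ is cofinal among the finitely generated submodules of $E_R(k)$. Combining this with the functoriality of tight closure of zero -- namely, any $R$-linear map $\varphi \colon M_1 \to M_2$ satisfies $\varphi(0^*_{M_1}) \subseteq 0^*_{M_2}$, since the Peskine-Szpiro functor is a functor -- yields $0^{*,fg}_{E_R(k)} = \bigcup_t 0^*_{R/I_t}$ inside $E_R(k)$.

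Next I would compute $0^*_{R/I_t}$ explicitly. Since $F^e(R/I_t) = R/I_t^{[p^e]}$, an element $\bar r \in R/I_t$ lies in $0^*_{R/I_t}$ exactly when there exists $c \in R^\circ$ with $cr^{p^e} \in I_t^{[p^e]}$ for all $e \gg 0$, that is, precisely when $r \in I_t^*$. Hence $0^*_{R/I_t} = I_t^*/I_t$. To see that the transition map in the resulting direct system is multiplication by $x_1 \cdots x_d$, I observe that $x_1 \cdots x_d \cdot I_t \subseteq I_{t+1}$ combined with the routine containment $a \cdot I^* \subseteq (aI)^*$ for $a \in R$ (immediate from the definition of tight closure) shows that multiplication by $x_1 \cdots x_d$ sends $I_t^*$ into $I_{t+1}^*$. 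This induces the required map $I_t^*/I_t \to I_{t+1}^*/I_{t+1}$, which is the restriction of the transition map $R/I_t \to R/I_{t+1}$ in the direct system defining $E_R(k)$, giving the first identification.

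The argument for $H^{d-1}_\fm(R/J_1)$ is entirely parallel. Since $R/J_1$ is Cohen-Macaulay of dimension $d-1$ with system of parameters $x_2, \ldots, x_d$, the standard \v{C}ech-complex calculation presents $H^{d-1}_\fm(R/J_1)$ as the stated direct limit with injective structure maps, and repeating the analysis above with $I_t$ replaced by $(J_1, x_2^t, \ldots, x_d^t)$ yields the second formula. I expect the only mildly delicate step to be checking injectivity of the structure maps on each term of the directed system, so that tight closure in the image inside $E_R(k)$ agrees with tight closure in $R/I_t$ itself; this reduces to the colon computation $(I_{t+1} :_R x_1 \cdots x_d) = I_t$, which follows from the Cohen-Macaulay hypothesis on $R$ together with the regular sequence properties of $x_1, x_2, \ldots, x_d$ modulo $J_1$.
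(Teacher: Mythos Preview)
Your proposal is correct and follows essentially the same approach as the paper's proof, just with considerably more detail filled in. The paper's argument is only two sentences: the containment $\supseteq$ is declared clear from the definition of finitistic tight closure, and $\subseteq$ is said to follow because the transition maps in the direct limit system are injective under the Cohen--Macaulay hypothesis. Your cofinality argument, the explicit identification $0^*_{R/I_t}=I_t^*/I_t$, and the verification that injectivity reduces to the colon computation $(I_{t+1}:x_1\cdots x_d)=I_t$ are exactly the points the paper is leaving implicit.
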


\begin{proof} The containments $\supseteq$ are clear by definition of finitistic tight closure. The containments $\subseteq$ are also straight forward since the maps in the direct limit systems above are injective under the Cohen-Macaulay assumption.
\end{proof}

\begin{lemma}\label{Lemma Equiv way to check weak and strong} Let $(R,\fm,k)$ be a complete Cohen-Macaulay local normal domain of Krull dimension $d$ and of prime characteristic $p>0$.  Then $0^*_{E_R(k)}=0^{*,fg}_{E_R(k)}$ if and only if $0^*_{H^{d-1}_\fm(R/J_1)}=0^{*,fg}_{H_\fm^{d-1}(R/J_1)}$ for every choice of canonical ideal $J_1\subsetneq R$.
\end{lemma}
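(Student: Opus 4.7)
The plan is to combine the direct limit descriptions of Lemma~\ref{direct limit of finitisitc tight closure} with a careful analysis of the embedding $H^{d-1}_\fm(R/J_1)\hookrightarrow E_R(k)$ coming from applying local cohomology to $0 \to J_1 \to R \to R/J_1 \to 0$. Under $J_1\cong\omega_R$ and the Cohen--Macaulay hypothesis (which give $H^{d-1}_\fm(R)=0$ and $H^d_\fm(J_1)\cong E_R(k)$), the connecting homomorphism yields the short exact sequence $0\to H^{d-1}_\fm(R/J_1)\to E_R(k)\to H^d_\fm(R)\to 0$.

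Fix a canonical ideal $J_1$ with suitable parameters $x_1,\ldots,x_d$ and abbreviate $I_t=(x_1^{t-1}J_1,x_2^t,\ldots,x_d^t)$ and $K_t=(J_1,x_2^t,\ldots,x_d^t)$. The first step is to identify the embedding $H^{d-1}_\fm(R/J_1)\hookrightarrow E_R(k)$ level-wise as the injective map $R/K_t\xrightarrow{\cdot x_1^{t-1}}R/I_t$; injectivity follows from the colon identity $(I_t:x_1^{t-1})=K_t$, which is immediate from $x_1$ being a nonzerodivisor on $R/(x_2^t,\ldots,x_d^t)$. A direct check shows this map commutes with the transition maps $\cdot(x_2\cdots x_d)$ and $\cdot(x_1\cdots x_d)$ of the respective direct limits.

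The technical heart of the proof is to verify the two identifications
\[
0^*_{H^{d-1}_\fm(R/J_1)}=0^*_{E_R(k)}\cap H^{d-1}_\fm(R/J_1), \qquad 0^{*,fg}_{H^{d-1}_\fm(R/J_1)}=0^{*,fg}_{E_R(k)}\cap H^{d-1}_\fm(R/J_1).
\]
For the first equality, the non-trivial containment requires that the Frobenius pushforward $F^e(H^{d-1}_\fm(R/J_1))\to F^e(E_R(k))$ be injective for every $e\geq 0$. The same CM colon argument applies after raising to $p^e$-th powers: $(I_t^{[p^e]}:x_1^{(t-1)p^e})=K_t^{[p^e]}$ still holds because $x_1$ remains a nonzerodivisor on $R/(x_2^{tp^e},\ldots,x_d^{tp^e})$. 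Hence each map $R/K_t^{[p^e]}\xrightarrow{\cdot x_1^{(t-1)p^e}}R/I_t^{[p^e]}$ is injective, and injectivity is preserved under direct limits. The finitistic version uses the analogous identity $(I_t^*:x_1^{t-1})=K_t^*$, shown by the same colon computation, together with the CM colon $(K_{t+s}:(x_2\cdots x_d)^s)=K_t$ to correctly match representatives across the direct limit.

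With these preparations the $\Rightarrow$ direction is immediate: the assumption $0^*_{E_R(k)}=0^{*,fg}_{E_R(k)}$ intersected with $H^{d-1}_\fm(R/J_1)$ gives $0^*_{H^{d-1}_\fm(R/J_1)}=0^{*,fg}_{H^{d-1}_\fm(R/J_1)}$ for any chosen canonical $J_1$. For the $\Leftarrow$ direction, given $\eta\in 0^*_{E_R(k)}$, I would use that $E_R(k)$ is $\fm$-torsion to find $n$ with $\fm^n\subseteq\Ann_R(\eta)$. For any nonzero $a\in\fm^n$, the ideal $aJ_1$ is again a canonical ideal of $R$ (isomorphic to $J_1$ via the injective multiplication map $\cdot a$, $R$ being a domain) and satisfies $aJ_1\subseteq\fm^n\subseteq\Ann_R(\eta)$. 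Thus $\eta\in(0:_{E_R(k)}aJ_1)=H^{d-1}_\fm(R/aJ_1)$. Applying the first identification to the canonical ideal $aJ_1$ (with a suitable parameter system for $aJ_1$) gives $\eta\in 0^*_{H^{d-1}_\fm(R/aJ_1)}$; the hypothesis applied to $aJ_1$ yields $\eta\in 0^{*,fg}_{H^{d-1}_\fm(R/aJ_1)}$; and the second identification gives $\eta\in 0^{*,fg}_{E_R(k)}$. The principal obstacle is the Frobenius-twisted colon identity, but this reduces directly to the CM property of $R$, which is preserved after taking $p^e$-th powers of parameters, so the argument goes through routinely.
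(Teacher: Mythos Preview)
Your proposal is correct and follows essentially the same route as the paper. Both arguments hinge on the colon identities $(I_t:x_1^{t-1})=K_t$, $(I_t^{[p^e]}:x_1^{(t-1)p^e})=K_t^{[p^e]}$, and $(I_t^*:x_1^{t-1})=K_t^*$, all of which come from $x_1$ being regular on $R/(x_2^t,\ldots,x_d^t)$. The paper proves these inline via explicit element chases, whereas you package them as the two structural identifications $0^*_{H^{d-1}_\fm(R/J_1)}=0^*_{E_R(k)}\cap H^{d-1}_\fm(R/J_1)$ and $0^{*,fg}_{H^{d-1}_\fm(R/J_1)}=0^{*,fg}_{E_R(k)}\cap H^{d-1}_\fm(R/J_1)$.

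The only cosmetic difference is in the $\Leftarrow$ step where one must land $\eta$ inside some $H^{d-1}_\fm(R/J')$. The paper does this concretely: if $\eta=r+(x_1^{s-1}J_1,x_2^s,\ldots,x_d^s)$, it replaces $J_1$ by $x_1^{s-1}J_1$ and the parameters by their $s$-th powers, so $\eta$ sits at level~$1$ of the new system. You instead invoke the annihilator description $(0:_{E_R(k)}aJ_1)=H^{d-1}_\fm(R/aJ_1)$. Both work; your version tacitly uses that the level-wise embedding has image exactly $(0:_{E_R(k)}J_1)$, which follows because $R/J_1$ is Gorenstein (so $H^{d-1}_\fm(R/J_1)\cong E_{R/J_1}(k)\cong (0:_{E_R(k)}J_1)$ and an injective map between Artinian modules of the same length is onto). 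The paper's choice avoids this small extra verification, but the approaches are otherwise interchangeable.
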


\begin{proof}
 Let $x_1,\ldots,x_d\in R$ be a suitable system of parameters with respect to $J_1$, and identify the injective hull of the residue field as
\[
E_R(k)=\varinjlim_{t}\left(\frac{R}{(x_1^{t-1}J_1,x_2^t,\ldots,x_d^t)}\xrightarrow{\cdot x_1\cdots x_d}\frac{R}{(x_1^{t}J_1,x_2^{t+1},\ldots,x_d^{t+1})}\right).
\]

Suppose that $0^*_{H^{d-1}_\fm(R/J_1)}=0^{*,fg}_{H^{d-1}_\fm(R/J_1)}$ for every $t\in\mathbb{N}$ for every choice of canonical ideal $J_1\subsetneq R$.  Let $\eta\in 0^*_{E_R(k)}$. Suppose that $\eta=r+(x_1^{s-1}J_1,x_2^s,\ldots,x_d^s)$. If we replace the canonical ideal $J_1$ with the canonical ideal $x_1^{s-1}J_1$ and the parameter sequence $x_1,x_2,\ldots,x_d$ with $x_1^{s},x_2^s,\ldots,x_d^s$ then we may suppose that $\eta=r+(J_1,x_2,\ldots,x_d)$. Then there exists a $c\in R^\circ$ such that $c\eta^{p^e}=0$ for all $e\geq 1$. Equivalently, for every $e\in \N$ there exists a $t\in \N$ such that 
\[
cr^{p^e}(x_1\cdots x_d)^{(t-1)p^e}\in (x_1^{t-1}J_1,x^t_2,\ldots,x^t_d)^{[p^e]},
\]
in which case there exists an element $s\in J_1^{[p^e]}$ such that 
\[
(cr^{p^e}(x_2\cdots x_d)^{(t-1)p^e}-s)x_1^{(t-1)p^e}\in (x_2^t,\ldots,x^t_d)^{[p^e]}.
\]
But $x_1,x_2,\ldots,x_d$ is a regular sequence and therefore 
\[
cr^{p^e}(x_2\cdots x_d)^{(t-1)p^e}-s\in (x_2^t,\ldots,x^t_d)^{[p^e]}
\] and hence 
\[
cr^{p^e}(x_2\cdots x_d)^{(t-1)p^e}\in (J_1,x^t_2,\ldots,x^t_d)^{[p^e]}.
\]
If we identify $H^{d-1}_\fm(R/J_1)$ as 
\[
\varinjlim\left(\frac{R}{(J_1,x_2^t,\ldots,x_d^t)}\xrightarrow{\cdot x_2\cdots x_d}\frac{R}{(J_1,x_2^{t+1},\ldots,x_d^{t+1})}\right)
\]
then the above shows that the class of $r+(J_1,x_2,\ldots ,x_d)$ is an element of $0^*_{H^{d-1}_\fm(R/J_1)}$. Moreover, under the direct limit identification of $H^{d-1}_\fm(R/J_1)$ we have by Lemma~\ref{direct limit of finitisitc tight closure} that
\[
0^*_{H^{d-1}_\fm(R/J_1)}=0^{*,fg}_{H^{d-1}_\fm(R/J_1)}\cong \varinjlim\left(\frac{(J_1,x_2^t,\ldots,x_d^t)^*}{(J_1,x_2^t,\ldots,x_d^t)}\xrightarrow{\cdot x_2\cdots x_d}\frac{(J_1,x_2^{t+1},\ldots,x_d^{t+1})^*}{(J_1,x_2^{t+1},\ldots,x_d^{t+1})}\right).
\]
In particular, there exists a $t\in \N$ such that $(x_2\cdots x_d)^t r \in (J_1,x_2^{t+1},\ldots,x_d^{t+1})^*$. It follows that $(x_1x_2\cdots x_d)^t r  \in (x_1^tJ_1,x_2^{t+1},\ldots,x_d^{t+1})^*$ and therefore $\eta\in 0^{*,fg}_{E_R(k)}$.

Conversely, suppose that $0^*_{E_R(k)}=0^{*,fg}_{E_R(k)}$ and let $\eta\in 0^*_{H^{d-1}_\fm(R/J_1)}$. As above, we may replace our choice of canonical ideal and parameters $x_2,\ldots,x_d$ and assume that $\eta=r+(J_1,x_2,\ldots,x_d)$. Then there exists a $c\in R^\circ$ such that for every $e\in \N$ there exists a $t\in \N$ such that 
\[
cr^{p^e}(x_2\cdots x_d)^{(t-1)p^e}\in (J_1,x_2^t,\cdots,x_d^t)^{[p^e]}.
\]
It then follows that 
\[
cr^{p^e}(x_1x_2\cdots x_d)^{(t-1)p^e}\in (x_1^{t-1}J_1,x_2^t,\cdots,x_d^t)^{[p^e]}
\]
and therefore the element $r+(J_1,x_2,\ldots, x_d)$ of $E_R(k)$ is an element of $0^*_{E_R(k)}$. Under the direct limit identification of $E_R(k)$ we have by Lemma~\ref{direct limit of finitisitc tight closure} that
\[
0^*_{E_R(k)}=0^{*,fg}_{E_R(k)}\cong \varinjlim\left(\frac{(x_1^{t-1}J_1,x_2^t,\cdots,x_d^t)^*}{(x_1^{t-1}J_1,x_2^t,\cdots,x_d^t)}\xrightarrow{\cdot x_1\cdots x_d}\frac{(x_1^{t}J_1,x_2^{t+1},\cdots,x_d^{t+1})^*}{(x_1^{t}J_1,x_2^{t+1},\cdots,x_d^{t+1})}\right).
\]
Therefore there exists a $t\in \N$ such that $(x_1\cdots x_d)^tr\in (x_1^{t}J_1,x_2^{t+1},\ldots,x_d^{t+1})^*$, i.e., there exists a $c\in R^\circ$ such that 
\[
c((x_1\cdots x_d)^tr)^{p^e}\in (x_1^{t}J_1,x_2^{t+1},\ldots,x_d^{t+1})^{[p^e]}
\]
for every $e\in \N$. Thus for every $e\in \N$ there exists a $s\in J^{[p^e]}$ such that 
\[
(c((x_2\cdots x_d)^tr)^{p^e}-s)x_1^{tp^e}\in (x_2^{t+1},\cdots x_d^{t+1})^{[p^e]}.
\]
But $x_1,\ldots, x_d$ is a regular sequence and it follows that \[
c((x_2\cdots x_d)^tr)^{p^e}\in (J_1,x_2^{t+1},\ldots,x_d^{t+1})^{[p^e]}
\] 
for every $e\in \N$. In particular, $(x_2\cdots x_d)^tr\in (J_1,x_2^{t+1},\ldots,x_d^{t+1})^*$ and therefore $
\eta=(x_2\cdots x_d)^tr+ (J_1,x_2^{t+1},\ldots,x_d^{t+1})$
is an element of $0^{*,fg}_{H^{d-1}_\fm(R/J_1)}$.
 \end{proof}
 
 \subsection{$S_2$-ification and higher Ext-modules}
 
 Though we do not directly use the results of \cite{DuttaI, DuttaII}, we would like to mention that important aspects of our techniques are inspired by these two articles. For example, suppose $(S,\fn,k)$ is a Cohen-Macaulay local domain of dimension $d$ and $M$ a finitely generated $S$-module such that $\Ht(\Ann_S(M))=h$. Let $(F_\bullet,\partial_\bullet)$ be the minimal free resolution of $M$, let $(-)^*$ denote $\Hom_S(-,S)$, and consider the dual complex $(F_\bullet^*,\partial_\bullet^*)$. Because  $\Ht(\Ann_S(M))=h$ we have that the following complex is exact:
 \[
 0\to F_0^*\xrightarrow{\partial_1^*}F_1^*\to \ldots\to F_{h-1}^*\xrightarrow{\partial_{h}^*} F_h^*\to \coker(\partial_{h}^*)\to 0. 
 \]
In particular, $\depth(\coker(\partial_{h}^*))=d-h$. Moreover, there is a short exact sequence
\[
0\to \Ext^{h}_S(M,S)\to \coker(\partial_h^*)\to \im(\partial_{h+1}^*)\to 0.
\]
The module $\im(\partial_{h+1}^*)$ is torsion-free and therefore has depth at least $1$. If $d-h\geq 2$ then $\Ext^{h}_S(M,S)$ has depth at least $2$. If $d-h=1$ then the depth of $\Ext^{h}_S(M,S)$ is $1$. If $d-h=0$ then $M$ is $0$-dimensional. Therefore if $\Ht(\Ann_S(M))=h$ then $\Ext^{h}_S(M,S)$ is an $(S_2)$-module over its support, an observation we record for future reference.

\begin{lemma}\label{lemma s2-module} Let $(S,\fm,k)$ be a Cohen-Macaulay local domain and $M$ a finitely generated $S$-module such that $\Ht(\Ann_S(M))=h$. Then $\Ext^{h}_S(M,S)$ is an $(S_2)$-module over its support.
\end{lemma}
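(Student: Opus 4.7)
The paragraph preceding the lemma already outlines the argument; what remains is to make the depth estimates explicit and then globalize to Serre's $(S_2)$ condition. I would first invoke the Cohen--Macaulay hypothesis in the standard way: grade equals height for any ideal of $S$, so $\grade(\Ann_S M,S)=\Ht(\Ann_S M)=h$, and consequently $\Ext^i_S(M,S)=0$ for $0\leq i<h$. This vanishing is exactly what yields the acyclicity of the truncated dualized complex
$$0\to F_0^*\xrightarrow{\partial_1^*}F_1^*\to\cdots\to F_h^*\to \coker(\partial_h^*)\to 0$$
recorded in the excerpt.

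Next, I would break this acyclic complex into consecutive short exact sequences and apply the depth lemma $h$ times, each time using $\depth_S F_i^*=d$, to obtain $\depth_S\coker(\partial_h^*)\geq d-h$. Combining this with the short exact sequence
$$0\to\Ext^h_S(M,S)\to\coker(\partial_h^*)\to\im(\partial_{h+1}^*)\to 0$$
and the observation that $\im(\partial_{h+1}^*)\hookrightarrow F_{h+1}^*$ is torsion-free, hence of depth at least $1$ when $d\geq 1$, one more application of the depth lemma yields $\depth_S\Ext^h_S(M,S)\geq \min(2,d-h)$, which is the estimate informally described in the excerpt.

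Finally, to obtain Serre's $(S_2)$ condition over the support, I would localize at an arbitrary $\fp\in\Supp\Ext^h_S(M,S)$. Because both $\Ext$ and minimal free resolutions commute with localization, non-vanishing of $\Ext^h_{S_\fp}(M_\fp,S_\fp)$ forces $\Ht(\Ann_{S_\fp}M_\fp)=h$, since a strictly larger grade would, by the vanishing statement above, kill the module. Running the entire argument for $S_\fp$ in place of $S$ then produces
$$\depth_{S_\fp}\Ext^h_S(M,S)_\fp\;\geq\;\min(2,\dim S_\fp-h)\;\geq\;\min\bigl(2,\dim_{S_\fp}\Ext^h_S(M,S)_\fp\bigr),$$
where the second inequality uses the Cohen--Macaulay identity $\dim S_\fp-h=\dim(S_\fp/\Ann_{S_\fp}M_\fp)$ together with the containment $\Ann_{S_\fp}\Ext^h_S(M,S)_\fp\supseteq\Ann_{S_\fp}M_\fp$. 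This is precisely Serre's $(S_2)$ inequality at $\fp$. The only point requiring real care is the depth bookkeeping through the acyclic complex; once that is in hand, the globalization to localizations is essentially formal.
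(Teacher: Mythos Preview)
Your proof is correct and follows essentially the same approach as the paper's: both use the acyclicity of the dualized truncated resolution to obtain $\depth\coker(\partial_h^*)=d-h$, and then the short exact sequence with the torsion-free image $\im(\partial_{h+1}^*)$ to bound $\depth\Ext^h_S(M,S)\geq\min(2,d-h)$. You have made explicit the localization step needed to pass from a depth estimate at the maximal ideal to Serre's $(S_2)$ condition at every prime in the support, which the paper leaves implicit.
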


Continue to consider the ring $S$, the module $M$, and the resolution $(F_\bullet,\partial_\bullet)$ as above. Also consider the minimal free resolution $(G_\bullet,\delta_\bullet)$ of $\Ext^{h}_S(M,S)$. If $\depth(M)=d-h$ is maximal, then $\Ext^{h}_S(M,S)=\coker(\partial_h^*)$ and therefore $(G_\bullet,\delta_\bullet)$ is the complex
\[
 0\to F_0^*\xrightarrow{\partial_1^*}F_1^*\to \ldots\to F_{h-1}^*\xrightarrow{\partial_{h}^*} F_h^*\to 0.
\]
In particular, if $\depth(M)=d-h$ then $\Ext^{h}_S(\Ext^h_S(M,S),S)\cong M$. Suppose $\depth(M)<d-h$ and let $(F_\bullet^*,\partial_\bullet^*)_{tr}$ be the complex obtained by truncating $(F_\bullet^*,\partial_\bullet^*)$ at the $h$th spot. That is $(F_\bullet^*,\partial_\bullet^*)_{tr}$ is the minimal free resolution of $\coker(\partial_h^*)$. Then the natural inclusion $\Ext^h_S(M,S)\subseteq \coker(\partial_h^*)$ lifts to a map of complexes $(G_\bullet,\delta_\bullet)\to (F_\bullet^*,\partial_\bullet^*)_{tr}$ and therefore there is an induced map $M\to \Ext^h_S(\Ext^h_S(M,S),S)$.

\begin{lemma}\label{Inclusion lemma} Let $(R,\fm,k)$ be a complete local domain of dimension at least $3$ and $J\subseteq R$ a pure height $1$ ideal. Suppose $(S,\fn,k)$ is a regular local ring mapping onto $R$, $R\cong S/P$, and $\Ht(P)=h$. Then for every integer $i$ the kernel of the natural map $R/J^i\to \Ext^{h+1}_S(\Ext^{h+1}_S(R/J^i,S),S)$ is $J^{(i)}/J^i$. In particular, for every integer $i$ there is a natural inclusion $R/J^{(i)}\subseteq \Ext^{h+1}_S(\Ext^{h+1}_S(R/J^i,S),S)$. Moreover, the natural inclusion $R/J^{(i)}\subseteq \Ext^{h+1}_S(\Ext^{h+1}_S(R/J^i,S),S)$ is an isomorphism whenever localized at prime ideal $\fp\in V(J)$ such that $(R/J^{(i)})_\fp$ is Cohen-Macaulay.
 \end{lemma}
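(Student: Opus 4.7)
The plan is to reduce the claim to injectivity of the biduality map for $R/J^{(i)}$ and then verify that injectivity by a local check at associated primes. Throughout, let $\phi_M : M \to \Ext^{h+1}_S(\Ext^{h+1}_S(M,S),S)$ denote the natural, functorial biduality map constructed in the paragraph preceding this lemma; it is defined whenever $M$ is a finitely generated $S$-module with $\Ht_S\Ann_S M = h+1$. Since $J$ is pure height one in $R$ and $S$ is catenary, $\Ann_S(R/J^i)$ indeed has radical of height exactly $h+1$ in $S$, so $\phi_{R/J^i}$ is defined and is the map in question.

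Next, apply $\Ext^\bullet_S(-,S)$ to the short exact sequence $0\to J^{(i)}/J^i \to R/J^i \xrightarrow{q} R/J^{(i)} \to 0$. The module $J^{(i)}/J^i$ is supported only on the embedded associated primes of $J^i$, all of height at least two in $R$, so $\Ht_S\Ann_S(J^{(i)}/J^i) \geq h+2$. Since $S$ is regular, this forces $\Ext^j_S(J^{(i)}/J^i, S) = 0$ for $j \leq h+1$, and the long exact Ext sequence yields an isomorphism $\Ext^{h+1}_S(R/J^{(i)},S) \xrightarrow{\cong} \Ext^{h+1}_S(R/J^i, S)$, which upon dualizing gives an isomorphism on the double Exts. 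Naturality of $\phi$ with respect to $q$ then gives $\ker \phi_{R/J^i} = q^{-1}(\ker \phi_{R/J^{(i)}})$, so $\ker \phi_{R/J^i}\supseteq J^{(i)}/J^i$ with equality if and only if $\phi_{R/J^{(i)}}$ is injective.

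The main obstacle is therefore the injectivity of $\phi_{R/J^{(i)}}$. Because $J$ is pure height one, $R/J^{(i)}$ has no embedded primes; each associated prime of $R/J^{(i)}$ is a minimal prime of $J$ and has height one in $R$. At such an associated prime $\fp$ with preimage $\fq\subset S$, the module $(R/J^{(i)})_\fp = R_\fp/J^i R_\fp$ is a finite-length quotient of the one-dimensional local domain $R_\fp$, hence Cohen-Macaulay over $R_\fp$, and over $S_\fq$ satisfies
\[
\depth_{S_\fq}((R/J^{(i)})_\fp) = 0 = \dim S_\fq - (h+1),
\]
so it is Cohen-Macaulay over $S_\fq$ of maximal depth for its codimension. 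By the discussion following Lemma~\ref{lemma s2-module}, $(\phi_{R/J^{(i)}})_\fp$ is then an isomorphism; hence $\ker \phi_{R/J^{(i)}}$ vanishes at every associated prime of $R/J^{(i)}$ and, being a submodule of $R/J^{(i)}$, must be zero. The identical local argument, now applied at any $\fp\in V(J)$ for which $(R/J^{(i)})_\fp$ is Cohen-Macaulay, shows that $(\phi_{R/J^{(i)}})_\fp$ is an isomorphism there as well, yielding the ``moreover'' clause.
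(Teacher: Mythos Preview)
Your proof is correct. Both your argument and the paper's rest on the same local observation—that the biduality map localizes to an isomorphism at any prime where $R/J^{(i)}$ is Cohen--Macaulay, in particular at the minimal primes of $J$—but you organize the passage from $R/J^i$ to $R/J^{(i)}$ differently.

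The paper argues directly: since the map $R/J^i \to \Ext^{h+1}_S(\Ext^{h+1}_S(R/J^i,S),S)$ is an isomorphism at each minimal prime of $J$, the kernel is exactly the submodule of $R/J^i$ vanishing at those primes, namely $J^{(i)}/J^i$. The containment $J^{(i)}/J^i \subseteq \ker$ is left implicit and relies on the target being $(S_2)$ over its support (two applications of Lemma~\ref{lemma s2-module}), so that an element of the target vanishing at all minimal primes of $J$ is zero. You instead pass through the short exact sequence $0\to J^{(i)}/J^i \to R/J^i \to R/J^{(i)}\to 0$, use the height bound on the support of $J^{(i)}/J^i$ to identify the double Exts, and reduce everything to the injectivity of $\phi_{R/J^{(i)}}$, which you then check at associated primes. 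Your route makes the containment $J^{(i)}/J^i \subseteq \ker\phi_{R/J^i}$ automatic from the preimage formula $\ker\phi_{R/J^i} = q^{-1}(\ker\phi_{R/J^{(i)}})$, and it avoids any separate appeal to the $(S_2)$ property of the target. The cost is that you need naturality of the biduality map, which is standard but not spelled out in the paper's construction. Either way the ``moreover'' clause follows identically from the maximal--depth criterion in the discussion preceding Lemma~\ref{lemma s2-module}.
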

\begin{proof}
It only remains to show that the kernel of $R/J^i\to \Ext^{h+1}_S(\Ext^{h+1}_S(R/J^i,S),S)$ is $J^{(i)}/J^i$. But this follows from the observation that the map 
\[
R/J^i\to \Ext^{h+1}_S(\Ext^{h+1}_S(R/J^i,S),S)
\]
is an isomorphism when localized at any minimal component of $J$ by the discussion proceeding the statement of the lemma.
 \end{proof}
 
 We record a corollary of Lemma~\ref{Inclusion lemma} for future reference.
 
 \begin{corollary}\label{Inclusion lemma corollary}
 Let $(R,\fm,k)$ be a complete local Cohen-Macaulay domain, which is $\Q$-Gorenstein in codimension $2$, and $J_1\subsetneq R$ a choice of canonical ideal. Let $m\in\N$ be an integer such that $J_1^{(m)}$ is principal in codimension $2$. Suppose $(S,\fn,k)$ is a regular local ring mapping onto $R$, $R\cong S/P$, and $\Ht(P)=h$. Then for every integer $i$ the natural inclusion $R/J_1^{(mi+1)}\to \Ext^{h+1}_S(\Ext^{h+1}_S(R/J_1^{mi+1},S),S)$ is an isomorphism whenever localized at a prime ideal of $R$ of height $2$ or less.
 \end{corollary}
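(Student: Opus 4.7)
The plan is to invoke Lemma~\ref{Inclusion lemma} with the pure height one ideal $J = J_1$ and the exponent $i$ replaced by $mi+1$. That lemma already produces the natural inclusion $R/J_1^{(mi+1)}\hookrightarrow \Ext^{h+1}_S(\Ext^{h+1}_S(R/J_1^{mi+1},S),S)$ and states that this inclusion becomes an isomorphism after localizing at any $\fp\in V(J_1)$ for which $(R/J_1^{(mi+1)})_\fp$ is Cohen-Macaulay. So the whole proof should reduce to verifying Cohen-Macaulayness of this quotient at each prime of height at most two contained in $V(J_1)$, together with a trivial check at primes of height at most two not in $V(J_1)$.

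First I would dispose of the trivialities. For a prime $\fp$ of height at most two with $\fp\notin V(J_1)$, we have $J_1R_\fp=R_\fp$, so $(R/J_1^{(mi+1)})_\fp=0$; since the target Ext module is annihilated by a power of $J_1$ it also vanishes at $\fp$, and the inclusion is trivially an isomorphism. For a height one prime $\fp\in V(J_1)$, which must be a minimal prime of $J_1$, the ring $R_\fp$ is a DVR in which symbolic and ordinary powers coincide, so $R_\fp/J_1^{(mi+1)}R_\fp$ is an Artinian quotient of a DVR and is Cohen-Macaulay by inspection.

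The substantive case will be a height two prime $\fp\in V(J_1)$, where $R_\fp$ is a two-dimensional Cohen-Macaulay local domain. By the defining property of symbolic powers, all associated primes of $J_1^{(mi+1)}$ are minimal primes of $J_1$, hence of height one, so $I:=J_1^{(mi+1)}R_\fp$ is an unmixed height one ideal of $R_\fp$. Then $R_\fp/I$ will have dimension one; and because no associated prime of $I$ equals $\fp R_\fp$, prime avoidance supplies a nonzerodivisor on $R_\fp/I$, giving depth one as well. Thus $R_\fp/I$ is Cohen-Macaulay, and Lemma~\ref{Inclusion lemma} completes the proof. I do not anticipate any real obstacle here; the only point requiring care is the standard observation that symbolic powers of a pure height one ideal never acquire embedded associated primes, so that localizing such a symbolic power in a two-dimensional Cohen-Macaulay ring automatically produces an unmixed ideal.
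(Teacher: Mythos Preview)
Your proof is correct and follows the same reduction as the paper: invoke Lemma~\ref{Inclusion lemma} and verify that $(R/J_1^{(mi+1)})_\fp$ is Cohen--Macaulay for primes $\fp$ of height at most $2$. The only difference is in how Cohen--Macaulayness is checked at height~$2$ primes. The paper observes that the hypothesis $J_1^{(m)}$ principal in codimension~$2$ forces $J_1^{(mi+1)}R_\fp\cong J_1R_\fp$ to be a canonical ideal of $R_\fp$, so the quotient is Gorenstein; you instead argue directly that any symbolic power is unmixed of height~$1$, and in a $2$-dimensional Cohen--Macaulay local ring an unmixed height~$1$ ideal automatically has Cohen--Macaulay quotient. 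Your argument is more elementary and in fact never uses the $\Q$-Gorenstein in codimension~$2$ hypothesis or the specific choice of $m$, so it proves a slightly stronger statement than the corollary records.
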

 \begin{proof}
Immediate by Lemma~\ref{Inclusion lemma} since $J_1^{(mi+1)}R_\fp\cong J_1R_\fp$ is a canonical ideal whenever $\fp$ is a prime of $R$ of height $2$ or less.
 \end{proof}

\subsection{Rees algebras, symbolic Rees algebras, and analytic spread}

Let $R$ be a Noetherian domain and $I\subseteq R$ an ideal. The Rees ring of $I$ is the blowup algebra
\[
R[It]=R\oplus I\oplus I^2\oplus \cdots.
\]
If all associated primes of $I$ are minimal and $W$ is the complement of the union of the prime components of $I$, then the $N$th symbolic power of $I$ is the ideal $I^{(N)}=I^NR_W\cap R$. The symbolic Rees ring of $I$ is the $R$-algebra
\[
\mathcal{R}_I:=R\oplus I\oplus I^{(2)}\oplus \cdots,
\]
an $R$-algebra with the potential of being non-Noetherian, \cite{CutkoskyDuke, Rees58, Roberts}. We will typically be interested symbolic Rees rings associated to ideals of pure height $1$.

Suppose further that $(R,\fm,k)$ is local. Then the analytic spread of $I$ is the Krull dimension of the fiber cone 
\[
k\otimes_R R[It]\cong k\oplus \frac{I}{\fm I}\oplus \frac{I^2}{\fm I^2}\oplus \cdots.
\]
The analytic spread of a nonzero proper ideal $I$ is a natural number between $1$ and $\dim(R)$. If all associated primes of the ideal $I$ are minimal and the symbolic Rees ring $\mathcal{R}_I$ is Noetherian then we can compare the analytic spread of $I^{(N)}$ with the analytic spread of $I$.

 \begin{proposition}\label{Proposition analytic spread of symbolic powers} Let $(R,\fm,k)$ be a excellent local Noetherian normal domain and $I\subseteq R$ an ideal without embedded components. Suppose that the analytic spread of $IR_P$ is no more than $\Height(P)-1$ for each prime $P\supseteq I$ which is not an associated prime of $I$. If the analytic spread of $I$ is $\ell$ then for each integer $N\in\N$ the analytic spread of $I^{(N)}$ is no more than $\ell$.
\end{proposition}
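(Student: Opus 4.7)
I would prove this by induction on $d = \dim R$, combining two general facts. First, symbolic powers commute with localization: for every prime $P \supseteq I$ one has $I^{(N)} R_P = (IR_P)^{(N)}$, since both equal the intersection of the $P_i R_P$-primary components of $I^N R_P$ over the minimal primes $P_i \subseteq P$ of $I$. Second, in an excellent formally equidimensional local ring---which our excellent normal domain is---the Ratliff--McAdam theory provides the localization inequality
\[
\ell(K) \geq \ell(K R_P) + \dim(R/P)
\]
for every ideal $K$ and every prime $P \supseteq K$, with equality attained at some so-called quintasymptotic prime of $K$.

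The base case $d = 1$ is trivial: any nonzero proper ideal in a DVR is $\fm$-primary, so $I^{(N)} = I^N$. For the inductive step, fix a quintasymptotic prime $P \supseteq I$ of $I^{(N)}$ realizing $\ell(I^{(N)}) = \ell(I^{(N)} R_P) + \dim(R/P)$. If $P \in \Ass(I)$ (equivalently, $P$ is minimal over $I$), then $IR_P$ is $PR_P$-primary and $(IR_P)^{(N)} = (IR_P)^N$, hence $\ell(I^{(N)} R_P) = \ell(IR_P)$; applying the localization inequality to $I$ itself at $P$ gives $\ell(IR_P) + \dim(R/P) \leq \ell(I) = \ell$, completing this case. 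If $P \notin \Ass(I)$, the hypothesis yields $\ell(IR_P) \leq \Ht(P) - 1$; when additionally $P \neq \fm$, the ring $R_P$ has dimension strictly less than $d$, the hypotheses of the proposition are inherited by $R_P$ (since the condition $\ell(IR_Q) \leq \Ht(Q) - 1$ for non-associated $Q \subseteq P$ transfers verbatim to $R_P$), and the inductive hypothesis gives $\ell(I^{(N)} R_P) = \ell((IR_P)^{(N)}) \leq \ell(IR_P)$; combined with the localization inequality for $I$ at $P$, this yields $\ell(I^{(N)}) \leq \ell(IR_P) + \dim(R/P) \leq \ell$.

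The delicate case is when the quintasymptotic prime $P$ realizing the max for $I^{(N)}$ equals $\fm$ while $\fm \notin \Ass(I)$, since induction fails there ($R_P = R$). I would rule this case out: when $\fm \notin \Ass(I)$ the hypothesis forces $\ell(I) \leq d - 1$, and since an excellent normal domain is formally equidimensional, McAdam's theorem characterizes $\fm$ as quintasymptotic for $I^{(N)}$ precisely when $\fm$ is an asymptotic associated prime of the powers $(I^{(N)})^n$. Using Brodmann's persistence together with the inclusions $I^{Nn} \subseteq (I^{(N)})^n \subseteq I^{(Nn)}$, and the observation that the only asymptotic associated primes of powers of $I$ are those $P$ with $\ell(IR_P) = \Ht(P)$---which by hypothesis excludes $\fm$---one obtains that $\fm$ cannot become such an asymptotic prime for $I^{(N)}$ either. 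This comparison of asymptotic prime sets $\Ass(R/(I^{(N)})^n)$ versus $\Ass(R/I^m)$, controlled by the pointwise hypothesis on analytic spread, is the principal technical obstacle and is where the hypothesis of no embedded components together with the bound $\ell(IR_P) \leq \Ht(P) - 1$ at non-associated primes is doing its essential work.
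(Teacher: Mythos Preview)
Your proof rests on a claimed localization inequality that is false. You assert that in an excellent formally equidimensional local ring, for any ideal $K$ and any prime $P\supseteq K$ one has $\ell(K)\geq \ell(KR_P)+\dim(R/P)$, with equality at some quintasymptotic prime. Take $R=k[[x,y,z]]$, $K=(x,y)$, and $P=(x,y)$. Then $\ell(K)=2$ (the fiber cone is $k[X,Y]$), while $\ell(KR_P)=\dim R_P=2$ and $\dim(R/P)=1$, so $\ell(KR_P)+\dim(R/P)=3>2=\ell(K)$. Thus the inequality fails, and with it the entire inductive scaffolding. (McAdam's theorem says $P\in\overline{A}^*(K)$ iff $\ell(KR_P)=\Ht(P)$ in this setting; it does not yield an additive formula for $\ell(K)$.) Your treatment of the ``delicate case'' $P=\fm$ is also incomplete: the containments $I^{Nn}\subseteq (I^{(N)})^n\subseteq I^{(Nn)}$ do not by themselves transfer non-membership of $\fm$ in $A^*(I)$ to non-membership in $A^*(I^{(N)})$, and you give no argument bridging this gap.

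The paper's proof takes a completely different and much shorter route. The hypothesis that $\ell(IR_P)\leq\Ht(P)-1$ at every non-associated prime is exactly the criterion of Cutkosky--Herzog--Srinivasan for the symbolic Rees algebra $\mathcal{R}_I$ to sit inside the normalization $\overline{R[It]}$; since $R$ is excellent, this makes $R[It]\to\mathcal{R}_I$ a finite extension. Passing to $N$th Veronese subrings gives that $R[I^Nt]\to\mathcal{R}_{I^{(N)}}$ is finite, and this factors through $R[I^{(N)}t]$, so $R[I^{(N)}t]$ is finite over $R[I^Nt]$. Tensoring with $k$ gives a finite map of fiber cones $k\otimes R[I^Nt]\to k\otimes R[I^{(N)}t]$, whence $\ell(I^{(N)})=\dim\bigl(k\otimes R[I^{(N)}t]\bigr)\leq\dim\bigl(k\otimes R[I^Nt]\bigr)=\ell(I^N)=\ell(I)$. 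No induction, no localization formulas, no case analysis on primes.
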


\begin{proof} Under the assumptions of the proposition  the symbolic Rees ring $\mathcal{R}_I$ is a graded subalgebra of the normalization of $R[It]$, \cite[Theorem~1.1]{CutkoskyHerzogSrinivasan}. In particular, $R[It]\to \mathcal{R}_I$ is finite. Hence the maps of the $N$th Veronese subalgebras
\[
R[I^Nt]\to \mathcal{R}_{I^{(N)}}
\]
are finite for each integer $N$. Observe that the $R$-algebra map above can be factored as
\[
R[I^Nt]\to R[I^{(N)}t]\to \mathcal{R}_{I^{(N)}} .
\]
Therefore the induced map of fiber cones
\[
k\otimes R[I^Nt]\to k\otimes R[I^{(N)}t]
\]
are finite for each integer $N$. In particular, $k\otimes R[I^{(N)}t]$ has Krull dimension no more than the analytic spread of $I^N$ and the analytic spread of $I^N$ is equal to the analytic spread of $I$. \footnote{Jonathan Monta\~no has shown to us Proposition~\ref{Proposition analytic spread of symbolic powers} can be significantly generalized. It is possible to adapt the proof technique of Proposition~\ref{Proposition analytic spread of symbolic powers} under the weaker assumptions that $R$ is assumed to be a domain which is analytically unramified and formally equidimensional. Under these assumptions the normalization of $R[It]$ is Noetherian and one can adapt the proof of \cite[Theorem~1.1]{CutkoskyHerzogSrinivasan} to this scenario.}
\end{proof}

Finite generation of symbolic Rees rings away from the maximal ideal of a local ring allows us to effectively compare ordinary and symbolic powers of an ideal.

\begin{proposition}\label{Proposition finite generation on punctured spectrum} Let $(R,\fm,k)$ be a local domain and $I\subseteq R$ an ideal without embedded components. Suppose that for each $P\in \Spec(R)-\{\fm\}$ that $R_P\otimes \mathcal{R}_I$ is a Noetherian $R_P$-algebra. Then there exists an integer $N\in\N$ such that for all $i\in \N$ the inclusion of ideals $I^{(N)i}\subseteq I^{(Ni)}$ agree when localized at any point of $\Spec(R)-\{\fm\}$.
\end{proposition}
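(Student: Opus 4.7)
My plan is to proceed by a local-to-global argument: for each $P \in U := \Spec(R) \setminus \{\fm\}$, I extract an exponent $N_P$ from the Noetherian hypothesis, spread the resulting equality of ideals to an open neighborhood of $P$, invoke quasi-compactness of $U$ to reduce to finitely many neighborhoods, and then take a least common multiple.

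First, for each $P \in U$, the graded $R_P$-algebra $R_P \otimes \mathcal{R}_I$ is Noetherian and hence finitely generated as an $R_P$-algebra. A standard fact about finitely generated graded algebras over a Noetherian base -- that some Veronese subring is generated in degree one -- produces $N_P \in \N$ such that $(I^{(N_P)} R_P)^i = I^{(N_P i)} R_P$ for every $i \geq 1$. Iterating the degree-one Veronese relation shows that the same equality holds for any multiple $mN_P$ of $N_P$, since $(I^{(mN_P)})^i R_P = ((I^{(N_P)})^m R_P)^i = (I^{(N_P)})^{mi} R_P = I^{(mN_P i)} R_P$.

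The main work is the second step: showing that for each $P \in U$ there exists $f_P \in R \setminus P$ such that $(I^{(N_P)})^i R[1/f_P] = I^{(N_P i)} R[1/f_P]$ for all $i \geq 1$. The idea is to choose $a_1,\ldots,a_r \in I^{(N_P)}$ generating $I^{(N_P)} R_P$ as an $R_P$-module, form the finitely generated (hence Noetherian) $R$-subalgebra $B = R[a_1 t,\ldots,a_r t] \subseteq \mathcal{R}_I^{(N_P)}$, and, after inverting one element of $R \setminus P$ that annihilates the finitely generated $R$-module $I^{(N_P)}/(a_1,\ldots,a_r)$, identify $B$ with the ordinary Rees ring $R[I^{(N_P)} t]$. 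The task then reduces to showing that the cokernel of the inclusion $R[I^{(N_P)} t] \hookrightarrow \mathcal{R}_I^{(N_P)}$ vanishes on a neighborhood of $P$. I would attempt this by establishing that this cokernel is finitely generated as a module over the Noetherian ring $R[I^{(N_P)} t]$; once this is known, its zero stalk at $P$ yields an element of $R \setminus P$ annihilating it, and that element can be inverted to furnish $f_P$. This finite-generation claim -- essentially a module-finiteness statement for a Veronese of the symbolic Rees algebra over the ordinary Rees algebra of the corresponding symbolic power -- is the main obstacle, since in general zero stalks do not force vanishing on an open set when the module is not finitely generated; one should expect to use the graded structure together with the fact that the module-finiteness becomes an equality after localizing at $P$.

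Finally, since $U$ is Noetherian as a topological space it is quasi-compact, so the open cover $\{D(f_P) \cap U : P \in U\}$ admits a finite subcover $D(f_{P_1}) \cap U,\ldots,D(f_{P_k}) \cap U$. Setting $N = \operatorname{lcm}(N_{P_1},\ldots,N_{P_k})$ and writing $N = m_j N_{P_j}$, for every $Q \in D(f_{P_j}) \cap U$ the equality $(I^{(N_{P_j})})^i R_Q = I^{(N_{P_j} i)} R_Q$ holds for all $i$, and the iteration principle from the first step then yields $(I^{(N)})^i R_Q = I^{(Ni)} R_Q$ for all $i$. Since the chosen basic opens cover $U$, this gives the desired uniform equality at every point of $\Spec(R) \setminus \{\fm\}$.
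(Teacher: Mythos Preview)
Your overall architecture---local Veronese exponent, spread to a neighborhood, quasi-compactness plus least common multiple---matches the paper exactly. The gap is precisely where you flag it: the spreading step. You propose to show that the cokernel of $R[I^{(N_P)}t]\hookrightarrow\mathcal{R}_{I^{(N_P)}}$ is a finitely generated module over the Noetherian ring $R[I^{(N_P)}t]$, but this cokernel is $\bigoplus_{i\geq 1} I^{(N_P i)}/(I^{(N_P)})^i$, and its finite generation over the Rees ring is essentially equivalent to the Noetherianness of $\mathcal{R}_{I^{(N_P)}}$ itself---which you only know after localizing at $P$, not globally. Knowing that each graded piece vanishes at $P$ gives, for each $i$ separately, an element of $R\setminus P$ killing that piece, but there is no mechanism in your argument to produce a single such element working for all $i$ simultaneously. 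So the module-finiteness claim is circular, and the ``graded structure together with module-finiteness after localizing at $P$'' does not close the gap.

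The paper replaces this step with a completely different idea. It invokes Brodmann's theorem that the set $\Lambda=\bigcup_{i\geq 1}\Ass\bigl((I^{(N_P)})^i\bigr)$ is finite. One then chooses by prime avoidance an element $s$ lying in every non-minimal member of $\Lambda$ and avoiding the minimal primes of $I$; since $(I^{(N_P)})^iR_P=I^{(N_Pi)}R_P$ has no embedded primes, none of the non-minimal primes in $\Lambda$ are contained in $P$, so one may also take $s\notin P$. On $D(s)$ every $(I^{(N_P)})^iR_Q$ is unmixed, hence equals its own symbolic power $I^{(N_Pi)}R_Q$. The finiteness of asymptotic primes is exactly the uniform control over $i$ that your approach lacks.
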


\begin{proof} Let $P\in\Spec(R)- \{\fm\}$. We aim to show that there exists a natural number $N_P$, depending on $P$, and open neighborhood $U_P- \Spec(R)\setminus\{\fm\}$ of $P$ so that for all $Q\in U$ we have that $R_Q\otimes \mathcal{R}_{I^{(N_P)}}$ is a standard graded $R_Q$-algebra. Once this is accomplished we can then cover $\Spec(R)-\{\fm\}$ by finitely many open sets $U_{P_1},\ldots, U_{P_t}$ so that for each $1\leq i\leq t$ the algebra $R_{Q}\otimes \mathcal{R}_{I^{(N_{P_i})}}$ is a standard graded $R_{Q}$-algebra for each $Q\in U_{P_i}$. We then take $N$ to be a common multiple of $N_1,N_2,\ldots, N_t$ so that $\mathcal{R}_{I^{(N)}}$ loaclizes to a standard graded $R_P$-algebra for each $P\in\Spec(R)-\{\fm\}$, i.e. for each $i\in\mathbb{N}$ the inclusion of ideals $I^{(N)i}\subseteq I^{(Ni)}$ agrees when localized at any point of $\Spec(R)-\{\fm\}$.

To this end we let $P\in\Spec(R)-\{\fm\}$. We are assuming that $R_P\otimes \mathcal{R}_I$ is a Noetherian $R_P$-algebra. Thus there exists an integer $N$ so that the $N$th Veronese of $R_P\otimes \mathcal{R}_I$ is a standard graded $R_P$-algebra, i.e. $R_P\otimes \mathcal{R}_{I^{(N)}}$ is a standard graded $R_P$-algebra. Equivalently, for each $i\in \mathbb{N}$ the inclusion of ideals $I^{(N)i}\subseteq I^{(Ni)}$ agrees when localized at $P$. Now we consider the collection of associated primes $\Lambda=\bigcup_{i\in \mathbb{N}}\Ass\left(I^{(N)i}\right)$. The set $\Lambda$ is a finite set by \cite{Brodmann}, see also \cite{HunekeSmrinov}. By prime avoidance we can choose an element $s\in R$ which is contained in each non-minimal member of $\Lambda$ and avoids each of the prime components of $I$. If $Q$ is a prime ideal in the open set $D(s)=\{Q\in\Spec(R)\mid s\not\in Q\}$ then for each $i\in\mathbb{N}$ the collection of associated primes $\bigcup_{i\in\mathbb{N}}I^{(N)i}R_Q$ agrees with the collection of minimal primes of $I^{(N)}R_Q$, i.e. the inclusion of ideals $I^{(N)i}\subseteq I^{(Ni)}$ agrees when localized at any prime of $D(s)$. 
\end{proof}

Let $I\subseteq R$ be an ideal whose components have the same height and consider the finite collection of associated primes of the set of ideals $\{I^n\}_{n\in \N}$. The finite set of associated primes of the collection of ideals $\{I^n\}_{n\in \N}$ are known as the asymptotic associated primes of $I$. Suppose that $P_1,\ldots, P_n$ are the finitely many non-minimal asymptotic associated primes of $I$ and let $\fa=P_1\cap \cdots \cap P_n$. Then for each integer $N\in \N$ we have that $I^{(N)}=(I^N:\fa^{\infty}):=\{r\in R\mid \fa ^i r \subseteq I^N\, \forall i\gg 0\}$. The analytic spreads of the collection of ideals $\{I^{(N)}R_{P}\}_{N\in \mathbb{N}, P\in V(\fa)}$ and finite generation of the symbolic Rees ring $\mathcal{R}_I$ have an interesting connection.

\begin{theorem}{\cite[Theorem~1.1 and Theorem~1.5]{CutkoskyHerzogSrinivasan}}
\label{Criterion for finite generation}
Let $R$ be an excellent Noetherian normal domain of Krull dimension $d$ and $I\subseteq R$ an ideal without embedded components. Suppose $\fa\subseteq R$ is a reduced ideal of height at least $2$. Then the following are equivalent:
\begin{enumerate}
\item The ring $\displaystyle \bigoplus (I^N:\fa^\infty)$ is Noetherian;
\item There exists an integer $m$ so that for all $P\in V(\fa)$ the analytic spread of $(I^m:\fa^\infty)R_P$ is no more than $\Ht P-1$;
\item There exists an integer $\ell$ so that if $J=(I^\ell:\fa^\infty)$ then there is a containment of $R$-algebras $\displaystyle \bigoplus_N (J^{N}:\fa^\infty)\subseteq \overline{R[Jt]}$ where $\overline{R[Jt]}$ is the normalization of the Rees ring $R[Jt]$.
\end{enumerate}
In particular, if $\fa$ is the intersection of the non-minimal asymptotic primes of $I$ then the symbolic Rees ring $\mathcal{R}_I$ is Noetherian if and only if there exists an integer $m\in \N$ such that the analytic spread of $I^{(m)}R_P$ is no more than $\Ht P-1$ at each $P\in V(\fa)$.
\end{theorem}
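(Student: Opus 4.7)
The plan is to establish the equivalences by the cycle $(3) \Rightarrow (1) \Rightarrow (2) \Rightarrow (3)$, with the ``in particular'' statement following by taking $\fa$ to be the intersection of the finitely many non-minimal asymptotic primes of $I$ (finite by Brodmann), observing that this $\fa$ has height at least $2$ and that $(I^N:\fa^\infty) = I^{(N)}$ for this choice. For $(3) \Rightarrow (1)$, I would use that $R$ is excellent and normal so that the normalization $\overline{R[Jt]}$ is module-finite over $R[Jt]$, hence a Noetherian graded ring. The graded $R$-algebra $\bigoplus_N (J^N:\fa^\infty)$ sits between $R[Jt]$ and $\overline{R[Jt]}$, so as a submodule of a finitely generated module over the Noetherian ring $R[Jt]$ it is itself finitely generated over $R[Jt]$, and hence Noetherian as an $R$-algebra. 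Via the identification $(J^N:\fa^\infty) = (I^{\ell N}:\fa^\infty)$ (from associativity of colons with a reduced ideal), this exhibits the $\ell$th Veronese subring of $\bigoplus_N (I^N:\fa^\infty)$ as Noetherian, from which Noetherianness of the full symbolic Rees algebra follows.

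For $(1) \Rightarrow (2)$, Noetherianness yields an $m \in \N$ for which the $m$th Veronese subalgebra of $\bigoplus_N (I^N:\fa^\infty)$ is standard graded over $R$. Localizing at $P \in V(\fa)$ identifies the analytic spread of $(I^m:\fa^\infty)R_P$ with the Krull dimension of the fiber cone of this localized graded algebra. Because the saturation removes every associated prime of $I^m$ that meets $\fa$, every associated prime of $(I^m:\fa^\infty)R_P$ is a minimal prime of $IR_P$ and is therefore strictly contained in $PR_P$. Standard fiber-cone dimension estimates then bound the analytic spread by $\Ht P - 1$.

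For $(2) \Rightarrow (3)$, which I anticipate is the main technical obstacle, set $J := (I^m:\fa^\infty)$ and aim for $\bigoplus_N (J^N:\fa^\infty) \subseteq \overline{R[Jt]}$. I would argue by the valuative criterion of integral dependence: the containment amounts to checking $v(r) \geq N\, v(J)$ for every Rees valuation $v$ of $J$ and every $r \in (J^N:\fa^\infty)$. The centers of Rees valuations of $J$ correspond to codimension-one primes on the normalized blowup, and the analytic-spread hypothesis at every $P \in V(\fa)$ prevents any such center from lying over $V(\fa)$; a Rees valuation centered at $P$ would force the analytic spread of $JR_P$ to equal $\Ht P$, contradicting (2). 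At primes outside $V(\fa)$ the ideals $(J^N:\fa^\infty)$ and $J^N$ agree, making the valuative inequality trivial at every remaining Rees valuation and completing the containment.
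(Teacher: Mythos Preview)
The paper does not prove this theorem; it is quoted as \cite[Theorem~1.1 and Theorem~1.5]{CutkoskyHerzogSrinivasan}, with the remark immediately following that the criterion is originally due to Katz and Ratliff. There is therefore no proof in the paper to compare your proposal against.

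That said, your outline is broadly aligned with how the cited arguments go, with a couple of soft spots worth noting. In $(3)\Rightarrow(1)$, the step from ``the $\ell$th Veronese is Noetherian'' to ``the full algebra $\bigoplus_N (I^N:\fa^\infty)$ is Noetherian'' is not automatic for arbitrary graded rings; it requires the additional input that each graded piece is a finitely generated $R$-module (true here, since they are ideals). The paper itself invokes \cite[Theorem~2.1]{HerzogHibiTrung} for exactly this passage in the proof of Proposition~\ref{Proposition Noetherian if analytic spread 2}. In $(1)\Rightarrow(2)$, the phrase ``standard fiber-cone dimension estimates'' is hiding the real content: one needs McAdam's theorem that, over a locally formally equidimensional ring, a prime $P$ is an asymptotic associated prime of the powers of an ideal $J$ if and only if the analytic spread of $JR_P$ equals $\Ht P$. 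Once you have arranged $K=(I^m:\fa^\infty)$ with $K^N=(I^{mN}:\fa^\infty)$, no $P\in V(\fa)$ is an associated prime of any $K^N$, and McAdam then gives the bound. Your $(2)\Rightarrow(3)$ via Rees valuations is the standard route and your identification of the key point (that the analytic spread hypothesis forces the centers of all Rees valuations of $J$ to lie outside $V(\fa)$) is correct.
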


The criterion described in Theorem~\ref{Criterion for finite generation} to determine finite generation of symbolic Rees rings is originally due to Katz and Ratliff, \cite[Theorem~A and Corollary~1]{KatzRatliff}. The reader interested in learning more about connections between finite generation of symbolic Rees rings and analytic spread will also be interested in \cite{Schenzel} and \cite{DaoMontano}. We also remark that finite generation of symbolic Rees rings is deeply rooted to progress in the minimal model program. This is because finite generation of certain symbolic Rees rings is equivalent to the existence of flips, see \cite[Lemma~6.2 and Remark~6.3]{KollarMori}.

The following is a consequence of Theorem~\ref{Criterion for finite generation} and will be used in Section~\ref{section weak implies strong method}.

\begin{proposition}
\label{Proposition Noetherian if analytic spread 2} Let $R$ be an excellent Noetherian normal domain. Suppose that $I\subseteq R$ an ideal of pure height $1$ with analytic spread at most $2$ and suppose that as an element of the divisor class group of $R$ the ideal $I$ is torsion in codimension $2$. Then the symbolic Rees ring $\mathcal{R}_I$ is Noetherian.
\end{proposition}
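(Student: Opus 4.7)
The plan is to verify condition (2) of Theorem~\ref{Criterion for finite generation}: exhibit an integer $m$ with $\ell(I^{(m)}R_P)\leq\Height P-1$ for every $P\in V(\fa)$, where $\fa$ denotes the intersection of the non-minimal asymptotic associated primes $P_1,\dots,P_n$ of $I$. An immediate consequence of McAdam's theorem together with the bound $\ell(IR_{P_i})\leq\ell(I)\leq 2$ is that every $P_i$ has height exactly $2$, since each $P_i$ is a non-associated asymptotic prime strictly containing a height-one minimal prime of $I$.

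Next I would produce $m$ from the torsion-in-codimension-two hypothesis. For each height-$2$ prime $Q\supseteq I$, the torsion of $[I]$ in the divisor class group of $R_Q$ yields an index $n_Q$ with $I^{(n_Q)}R_Q$ principal. The loci $V_n:=\{P\in\Spec R:I^{(n)}R_P\text{ is principal}\}$ are open, satisfy $V_n\subseteq V_{kn}$ (since a power of a principal ideal is principal and symbolic powers of principal ideals coincide with ordinary powers), and their union covers every height-$\leq 2$ prime by hypothesis. Noetherianness of $\Spec R$ forces the ascending chain $\bigcup_{k\leq n}V_k$ to stabilize at some index $N$, and the choice $m=N!$ makes $I^{(m)}R_P$ principal at every prime $P$ of height at most two.

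Condition (2) is then immediate at each $P=P_i$ of height two: $\ell(I^{(m)}R_P)=1\leq 1=\Height P-1$. For $P\in V(\fa)$ with $\Height P\geq 3$, I would set $J:=I^{(m)}$ and apply Proposition~\ref{Proposition analytic spread of symbolic powers} in the local ring $R_P$ to $J$, verifying its hypothesis $\ell(JR_Q)\leq\Height Q-1$ at each non-associated $Q\subseteq P$ case-by-case: for $Q$ of height two it is immediate from our choice of $m$ when $Q\in V(\fa)$, and from McAdam applied to $I$ when $Q\notin V(\fa)$ (such $Q$ fails to be asymptotic for $I$, so symbolic and ordinary powers of $I$ agree in $R_Q$, forcing $\ell(JR_Q)=\ell(IR_Q)\leq 1$); for $Q\in V(\fa)$ of intermediate height the bound follows by induction on $\Height P$; for $Q\notin V(\fa)$ of higher height the coincidence of symbolic and ordinary powers again reduces the bound to $\ell(IR_Q)\leq 2\leq\Height Q-1$.

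The main obstacle is the induction step at $Q=P$ itself, where Proposition~\ref{Proposition analytic spread of symbolic powers} only yields the relative comparison $\ell(J^{(N)}R_P)\leq\ell(JR_P)$ and so demands an independent upper bound on $\ell(JR_P)$. The essential input, giving $\ell(JR_P)\leq 2\leq\Height P-1$, is that $J$ is locally principal in codimension two together with the $S_2$-property of $R$: these combine to exclude $P$ from the asymptotic associated primes of $J$, whence McAdam's theorem forces $\ell(JR_P)<\Height P$ as required, and the global bound $\ell(I)\leq 2$ pins this down at $\leq 2$.
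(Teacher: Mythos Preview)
Your argument has a genuine gap at the crucial step: bounding $\ell(I^{(m)}R_P)$ when $\Ht P \geq 3$. You claim that ``$J$ locally principal in codimension two together with the $S_2$-property of $R$ combine to exclude $P$ from the asymptotic associated primes of $J$,'' but this inference is unjustified. Being principal in codimension two says nothing about whether a height-$3$ prime $P$ appears in $\Ass(R/J^n)$ for some $n$; it only rules out height-$2$ embedded primes. Your appeal to ``the global bound $\ell(I)\leq 2$'' to pin down $\ell(JR_P)\leq 2$ is equally circular: there is no direct comparison between $\ell(I^{(m)})$ and $\ell(I)$ without Proposition~\ref{Proposition analytic spread of symbolic powers}, and the hypothesis of that proposition --- namely $\ell(IR_Q)\leq\Ht Q-1$ at every non-associated prime $Q$ --- fails for $I$ at height-$2$ primes, where $IR_Q$ need not be principal before passing to the $m$th symbolic power.

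The paper's proof sidesteps this difficulty by separating the non-minimal asymptotic primes of $I$ into those of height $\geq 3$ (intersection $\fa$) and those of height exactly $2$ (intersection $\fb$), then applying Theorem~\ref{Criterion for finite generation} twice. The first application, saturating with respect to $\fa$, succeeds because $V(\fa)$ contains only primes of height $\geq 3$, where $\ell(IR_P)\leq 2\leq\Ht P-1$ holds automatically. After this saturation the resulting ideal $\tilde I = I^m:\fa^\infty$ has asymptotic primes only among the height-$\leq 2$ asymptotic primes of $I$, so in the second application one only needs the analytic-spread condition at the height-$2$ components of $\fb$, where the torsion hypothesis makes a suitable power principal. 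This two-step decomposition is the missing idea in your attempt: you cannot verify condition~(2) of Theorem~\ref{Criterion for finite generation} for $I^{(m)}$ against the full $V(\fa)$ in one shot.
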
 

\begin{proof}
Let $\fa$ be the intersection of the asymptotic primes of $I$ of height at least $3$ and let $\fb$ be the intersection of the asymptotic primes of $I$ of height $2$. Then the $N$th symbolic power of the ideal $I$ is realized as $(I^N:\fa^\infty):\fb^{\infty}$. The analytic spread of $I$ is at most $2$ and the analytic spread of $I$ does not increase under localization. Therefore the $R$-algebra $\bigoplus (I^N:\fa^\infty)$ is Noetherian by Theorem~\ref{Criterion for finite generation}. Hence there exists an integer $m\in \N$ such that $\bigoplus (I^{mN}:\fa^\infty)$ is a standard graded $R$-algebra. Equivalently, for each integer $N\in \N$ we have that $(I^{m}:\fa^\infty)^N=(I^{mN}:\fa^\infty)$. Let $\tilde{I}=I^m:\fa^\infty$. Because we are assuming $I$ is torsion as an element in the divisor class group in codimension $2$ we can choose an integer $n$ such that $I^{(mn)}=\tilde{I}^n:\fb^\infty$ is principal in codimension $2$. In particular, the analytic spread of $\tilde{I}^n:\fb^\infty$ is $1$ at each of the components of $\fb$. Therefore the symbolic Rees ring $\mathcal{R}_{I^{(mn)}}=\bigoplus_{N\in \N} \tilde{I}^{nN}:\fb^\infty$ is Noetherian by a second application of Theorem~\ref{Criterion for finite generation}. It then follows that the symbolic Rees ring $\mathcal{R}_I$ is Noetherian since the $mn$th Veronese embedding of $\mathcal{R}_I$ is Noetherian, see the proof of \cite[Theorem~2.1]{HerzogHibiTrung}.
\end{proof}

Another important concept surrounding the theory of analytic spread and reductions is the notion of a reduction number. Let $R$ be a Noetherian ring and $J\subseteq I$ ideals such that $J$ forms a reduction of $I$. The reduction number of $I$ with respect $J$ is the least integer $N$ such that $JI^N=I^{N+1}$. A theorem of Hoa allows us to relate reduction numbers with the analytic spread of an ideal via understanding properties of the graded ring $\gr_I(R)=\oplus I^{i}/I^{i+1}$. But first, recall that if $S=S_0\oplus S_1\oplus \cdots $ is a graded ring and $S_+$ is the irrelevant ideal then the $i$th $a$-invariant of $S$ is denoted by $a_i(S)$ and is the largest degree of support of the local cohomology module $H^i_{S_+}(S)$.

\begin{theorem}{\cite[Theorem~2.1]{Hoa}}\label{Hoa's theorem on reduction numbers} Let $(R,\fm,k)$ be a Noetherian local ring and $I\subseteq R$ an ideal. Let $\ell$ be the analytic spread of $I$ and suppose that $a_\ell(\gr_I(R))<0$. Then for all integers $n\gg0$ and reductions $J$ of $I^n$ the ideal $I^n$ has reduction number with respect to $J$ no more than $\ell-1$.
\end{theorem}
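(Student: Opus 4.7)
The plan is to reduce to the case of an infinite residue field and to minimal reductions, then to apply the standard dictionary between reduction numbers and the top $a$-invariant of the associated graded ring, and finally to use the asymptotic behavior of $a$-invariants under taking powers.

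First, I would pass to the faithfully flat extension $R(x) := R[x]_{\fm R[x]}$ to assume the residue field is infinite; this preserves the analytic spread $\ell$ of $I$, every $a$-invariant of $\gr_I(R)$, and all relevant reduction-number questions. Next, for any reduction $J$ of $I^n$, I would choose a minimal reduction $J' \subseteq J$, which is generated by exactly $\ell$ elements (using that the analytic spread of $I^n$ equals that of $I$, since $F_{I^n}$ is the $n$-th Veronese of $F_I$). Because $J \supseteq J'$ gives $JI^{nr} \supseteq J'I^{nr}$, we have $r_J(I^n) \leq r_{J'}(I^n)$, so it suffices to treat minimal reductions.

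For the core step I would invoke the standard dictionary (going back to Trung, and developed by Hoa in the cited paper) between the reduction number of a minimal reduction and the top local cohomology of the associated graded ring. For a superficial sequence $y_1, \ldots, y_\ell \in I^n$ generating $J'$, the initial forms $y_i^\ast$ form a filter-regular sequence of degree-one elements on $\gr_{I^n}(R)$, and a Koszul-complex computation of top local cohomology with respect to these forms yields an estimate of the shape $r_{J'}(I^n) \leq a_\ell(\gr_{I^n}(R)) + \ell$. Thus the problem reduces to arranging $a_\ell(\gr_{I^n}(R)) \leq -1$ for $n \gg 0$.

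For this last step I would appeal to the asymptotic behavior of $a$-invariants under taking powers of $I$: under the hypothesis $a_\ell(G) < 0$ with $G = \gr_I(R)$, one can control $a_\ell(\gr_{I^n}(R))$ for large $n$ by comparing with the $n$-th Veronese of $G$ and filtering $I^{ni}/I^{n(i+1)}$ by the submodules $I^{ni+k}/I^{n(i+1)}$, whose successive quotients are pieces of $G$. The hypothesis then forces the relevant top-cohomology degrees of $\gr_{I^n}(R)$ to be at most $-1$ once $n$ is sufficiently large, yielding $r_{J'}(I^n) \leq \ell - 1$ through the estimate of the previous paragraph. The main obstacle, in my estimation, is this final asymptotic comparison: unlike the case of fiber cones, where $F_{I^n}$ is literally the $n$-th Veronese of $F_I$, the associated graded rings $\gr_{I^n}(R)$ and $G^{(n)}$ are only related by a surjection $\gr_{I^n}(R) \twoheadrightarrow G^{(n)}$, and tracking the contribution of the filtration quotients to the top local cohomology is where the asymptotic hypothesis $n \gg 0$ is genuinely needed.
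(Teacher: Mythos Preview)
The paper does not supply a proof of this statement; it is quoted verbatim from Hoa's paper as \cite[Theorem~2.1]{Hoa} and used as a black box. So there is no ``paper's own proof'' to compare against, and your proposal should be judged on its own merits.

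Your reduction steps (passing to infinite residue field, reducing to minimal reductions) are standard and fine. The real problem is the inequality you invoke in the third step. For a minimal reduction $J'=(y_1,\ldots,y_\ell)$ whose initial forms are a filter-regular sequence on $G=\gr_{I^n}(R)$, the identification $r_{J'}(I^n)=a_0\bigl(G/(y_1^\ast,\ldots,y_\ell^\ast)G\bigr)$ together with the long exact sequences coming from each $y_i^\ast$ gives the bounds
\[
a_\ell(G)+\ell \;\le\; r_{J'}(I^n) \;\le\; \operatorname{reg}(G)=\max_{0\le i\le \ell}\{a_i(G)+i\}.
\]
Thus the estimate $r_{J'}(I^n)\le a_\ell(\gr_{I^n}(R))+\ell$ that you assert is backwards: $a_\ell+\ell$ is a \emph{lower} bound for the reduction number, not an upper bound. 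Even if your step~4 went through and produced $a_\ell(\gr_{I^n}(R))\le -1$, the conclusion would be $r_{J'}(I^n)\ge \ell-1$, which says nothing toward the theorem. To bound $r_{J'}(I^n)$ from above via this route you would need to control \emph{all} of the $a_i(\gr_{I^n}(R))$, not just the top one, and the hypothesis gives you only $a_\ell(\gr_I(R))<0$.

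Your step~4 is, as you yourself flag, also incomplete: the filtration of $I^{ni}/I^{n(i+1)}$ with subquotients taken from $\gr_I(R)$ does not give direct control of local cohomology of $\gr_{I^n}(R)$, and the surjection $\gr_{I^n}(R)\twoheadrightarrow (\gr_I(R))^{(n)}$ goes the wrong way for bounding $a$-invariants of the source. Hoa's actual argument proceeds differently, exploiting asymptotic stabilization of reduction numbers for large powers rather than a direct $a$-invariant estimate for $\gr_{I^n}(R)$; you would need to consult \cite{Hoa} for the details.
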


As a consequence to Theorem~\ref{Hoa's theorem on reduction numbers} we can effectively estimate the reduction numbers of large powers of pure height $1$ ideals of a strongly $F$-regular ring.

\begin{theorem}\label{Linquan's Theorem}
Let $(R,\fm,k)$ be a strongly $F$-regular and $F$-finite local ring of prime characteristic $p>0$ and dimension $d\geq 2$. Suppose further that $I\subseteq R$ is an ideal of pure height $1$  with the property that $I^n=I^{(n)}$ for all $n\in \mathbb{N}$. If $I$ has analytic spread $\ell\geq 2$ then for all $n\gg0$ the reduction number of $I^n$ with respect to any reduction is no more than $\ell-1$.
\end{theorem}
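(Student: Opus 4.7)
The plan is to apply Theorem~\ref{Hoa's theorem on reduction numbers}, so the task reduces to verifying that $a_\ell(\gr_I(R))<0$. Once this vanishing is in hand, Hoa's criterion immediately yields the reduction number bound for all $n\gg 0$.

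The crux is to prove that the associated graded ring $\gr_I(R)$ is Cohen--Macaulay. To access this, I would pass to the extended Rees algebra $T:=R[It,t^{-1}]\subseteq R[t,t^{-1}]$, a Noetherian $\Z$-graded ring in which $t^{-1}$ is a homogeneous nonzerodivisor of degree $-1$ satisfying $T/t^{-1}T\cong \gr_I(R)$. The hypothesis $I^n=I^{(n)}$ for all $n$ forces $T$ to coincide with the symbolic Rees algebra of $I$ in nonnegative degrees and hence makes $T$ a normal Noetherian domain. I would then argue that $T$ is strongly $F$-regular, using that $R$ is strongly $F$-regular and $F$-finite together with the divisorial nature of $I$. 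Concretely, combining the normality of $T$ with the fact that $I$ is pure height $1$ allows one to realize $T$ as a section ring of a globally $F$-regular pair $(\Spec R,\Delta)$ via \cite[Corollary~6.9]{SchwedeSmith}, forcing $T$ to be strongly $F$-regular at its homogeneous maximal ideal and in particular Cohen--Macaulay. Since $t^{-1}$ is a nonzerodivisor, $\gr_I(R)$ is then Cohen--Macaulay of dimension $d$.

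With Cohen--Macaulayness of $\gr_I(R)$ in hand, the analysis splits on $\ell$. If $\ell<d$, then the local cohomology of $\gr_I(R)$ at its homogeneous maximal ideal vanishes in degree $\ell$, so $a_\ell(\gr_I(R))=-\infty<0$ and we are done. If $\ell=d$, I would exploit the fact that a strongly $F$-regular $\N$-graded ring over a field has strictly negative $a$-invariant; combined with the long exact sequence of local cohomology arising from $0\to T(1)\xrightarrow{t^{-1}}T\to \gr_I(R)\to 0$, this pins down $a_d(\gr_I(R))<0$.

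The principal obstacle is establishing strong $F$-regularity (or at minimum Cohen--Macaulayness) of the extended Rees algebra $T$. The equality $I^n=I^{(n)}$ gives normality for free, but upgrading this to strong $F$-regularity requires genuinely using the divisorial structure of $I$ and carefully tracking how a Frobenius splitting on $R$ extends compatibly with the grading on $T$, especially at primes of $R$ lying in the support of $I$.
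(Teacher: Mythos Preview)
Your overall plan—reduce to Hoa's criterion by proving $a_\ell(\gr_I(R))<0$—is exactly what the paper does, but your route to the $a$-invariant bound has a genuine gap. In the case $\ell<d$ you argue that Cohen--Macaulayness of $\gr_I(R)$ forces its $\ell$th local cohomology at the homogeneous maximal ideal to vanish, whence $a_\ell(\gr_I(R))=-\infty$. But the invariant $a_i(S)$ in Hoa's theorem (and in the paper's convention) is the top graded piece of $H^i_{S_+}(S)$, where $S_+$ is the \emph{irrelevant} ideal, not the homogeneous maximal ideal. Here the degree-zero piece $R/I$ has dimension $d-1\geq 1$, so these two supports are genuinely different, and Cohen--Macaulayness of $\gr_I(R)$ gives no direct control over $H^\ell_{S_+}(\gr_I(R))$. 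Your $\ell=d$ sketch has a parallel problem: $\gr_I(R)=T/t^{-1}T$ need not inherit strong $F$-regularity from $T$ (strong $F$-regularity does not pass to hypersurface sections), and it is graded over $R/I$ rather than over a field, so the negative-$a$-invariant fact you invoke is not available.

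The paper instead works with the Rees ring $S=R[It]$, which equals the symbolic Rees algebra by hypothesis and is therefore strongly $F$-regular by \cite[Lemma~3.1]{CEMS}. The key step is a Frobenius argument directly on $H^i_{S_+}(S)$: Serre vanishing produces a homogeneous $c$ of positive degree annihilating $[H^i_{S_+}(S)]_{\geq 0}$, and purity of $S\to F^e_*S$, $1\mapsto F^e_*c$, for $e\gg 0$ then forces $[H^i_{S_+}(S)]_{\geq 0}=0$, so $a_i(S)<0$ for all $2\leq i\leq d$. Cohen--Macaulayness of $S$ then gives $a_d(\gr_I(R))<0$ via \cite[Theorem~3.1]{Hoa}, and the remaining inequalities $a_i(\gr_I(R))<0$ follow by descending induction using Trung's comparison \cite[Theorem~3.1(ii)]{Trung}, which identifies $a_i(\gr_I(R))$ with $a_i(S)$ whenever $a_i(\gr_I(R))\geq a_{i+1}(\gr_I(R))$. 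In short, even once Cohen--Macaulayness is established, one still needs the Frobenius argument on $H^i_{S_+}$ together with the Trung transfer; Cohen--Macaulayness alone does not yield $a_\ell<0$.
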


\begin{proof}
By Theorem~\ref{Hoa's theorem on reduction numbers} it is enough to show that $a_\ell(\gr_I(R))<0$. In fact, we will show that $a_i(\gr_I(R))<0$ for all $2\leq i \leq d$. But first, we will show $a_i(R[It])<0$ for all $2\leq i \leq d$. Because $R[It]=\mathcal{R}_I$ we have that $S:=R[It]$ is a strongly $F$-regular graded $R$-algebra by \cite[Lemma~3.1]{CEMS}, see also \cite[Theorem~0.1]{WatanabeCyclicCover} and \cite[Main Theorem]{MaPolstraSchwedeTucker}. The cohomology groups $H^i_{S_+}(S)$ are only supported in finitely many positive degrees. Indeed, let $X=\Proj(S)$ so that $H^{i}_{S_+}(S)\cong H^{i-1}(X,\mathcal{O}_X)$ for all $i\geq 2$, see \cite[Theorem~12.41]{24hours}, and therefore $[H^{i}_{S_+}(S)]_N=H^{i-1}(X,\mathcal{O}_X(N))=0$ for all $N\gg 0$ by Serre vanishing, \cite[Theorem~5.2]{Hartshorne}. It follows that there exists a homogeneous positive degree element $c\in S$ such that $c[H^i_{S_+}(S)]_{\geq 0}=0$. Because $S$ is strongly $F$-regular the $S$-linear maps $S\xrightarrow{\cdot F^e_*c} F^e_*S$ are pure for all $e\gg 0$. Therefore the $e$th Frobneius action on $H^i_{S_+}(S)$ followed by multiplying by $c$, which is the map realized by tensoring the pure map $S\xrightarrow{\cdot F^e_*c} F^e_*S$ with $H^i_{S_+}(S)$, are injective. But the $e$th Frobenius action of $H^i_{S_+}(S)$ maps elements of degree $n$ to elements of degree $np^e$. Furthermore, $c$ was chosen to annihilate elements of non-negative degree and therefore $H^i_{S_+}(S)$ can only be supported in negative degree.

The ring $S=R[It]$ is Cohen-Macaulay and therefore $a_d(\gr_I(R))<0$ by \cite[Theorem~3.1]{Hoa}. By \cite[Theorem~3.1 (ii)]{Trung} we have that $a_i(\gr_I(R))=a_{i}(S)$ whenever $a_i(\gr_I(R))\geq a_{i+1}(\gr_I(R))$. An easy descending induction argument now tells us that $a_i(\gr_I(R))<0$ for all $2\leq i \leq d$ and this completes the proof of the theorem.
\end{proof}

\section{Koszul cohomology, local cohomology, and local cohomology bounds}\label{lcb introduction}
 
In this section $R$ denotes a commutative Noetherian ring. Unless stated otherwise, we do not make any assumptions on the characteristic of $R$. Our study of local cohomology modules is centered around the realization of local cohomology as a direct limit system of Koszul cohomologies. We are interested in understanding at what point in a direct limit system that an element of a Koszul cohomology group representing the zero element of a local cohomology group becomes zero. Key to our study of local cohomology is the notion of a local cohomology bound relative to a sequence of elements defined below in Definition~\ref{Key definition}.  

\subsection{Definition of local cohomology bound}

Suppose $M$ is a module over a ring $R$ and $\underline{x}=x_1,\ldots ,x_d$ a sequence of elements. Then for each integer $j\in \N$ we let $\underline{x}^j=x_1^j,\ldots, x_d^j$ and for each pair of integers $j_1\leq j_2$ let $\tilde{\alpha}^\bullet_{M;\underline{x};j_1;j_2}$ denote the natural map of Koszul cocomplexes
\[
K^\bullet(\underline{x}^{j_1};M)\xrightarrow{\tilde{\alpha}^\bullet_{M;\underline{x};j_1;j_2}} K^\bullet(\underline{x}^{j_2};M).
\]
The map of cocomplexes $\tilde{\alpha}^\bullet_{M;\underline{x};j_1;j_2}$ is realized as the following tensor product of maps of Koszul cocomplexes on one element:
\[
\tilde{\alpha}^\bullet_{M;\underline{x};j_1;j_2}\cong \tilde{\alpha}^\bullet_{R;x_1;j_1;j_2}\otimes \tilde{\alpha}^\bullet_{R;x_2;j_1;j_2}\otimes \cdots \otimes \tilde{\alpha}^\bullet_{R;x_d;j_1;j_2}\otimes M.
\]
We let $\alpha^i_{M;\underline{x};j_1;j_2}$ denote the induced map of Koszul cohomologies
\[
H^i(\underline{x}^{j_1};M)\xrightarrow{\alpha^i_{M;\underline{x};j_1;j_2}} H^i(\underline{x}^{j_2};M).
\]
More specifically, suppose $j_1 = j$ and $j_2=j+k$ and consider the Koszul cocomplexes $K^\bullet(\underline{x}^j;M)$ and $K^\bullet(\underline{x}^{j+k};M)$. Then the cokernel of the $d$th map of these cocomplexes are $M/(\underline{x}^j)M$ and $M/(\underline{x}^{j+k})M$ respectively. Let $\alpha_{j,k}^\bullet: K^\bullet(\underline{x}^{j};M)\to K^{\bullet}(\underline{x}^{j+k};M)$ be the natural choice of map of cocomplexes lifting the map $M/(\underline{x}^j)M\xrightarrow{\cdot (x_1\cdots x_d)^k}M/(\underline{x}^{j+k})M$. Then $\alpha_{M;\underline{x};j;j+k}$ is the induced map $\alpha^i_{j,k}$ on Koszul cohomology.
In particular, 
\[
\varinjlim_{j_1\leq j_2} \left(H^i(\underline{x}^{j_1};M)\xrightarrow{\alpha^i_{M;\underline{x};j_1;j_2}} H^i(\underline{x}^{j_2};M)\right)\cong H^i_{(\underline{x})A}(M)
\]
by \cite[Theorem~3.5.6]{BrunsHerzog}.

Denote by $\alpha^i_{M;\underline{x};j;\infty}$ the natural map
\[
H^i(\underline{x}^{j};M)\xrightarrow{\alpha^i_{M;\underline{x};j;\infty}}H^i_{(\underline{x})A}(M).
\]
Observe that $\eta\in \ker (\alpha^i_{M;\underline{x};j;\infty})$ if and only if there exists some $k\geq 0$ such that $\eta\in \ker(\alpha^i_{M;\underline{x};j;j+k}).$ If $\eta\in  \ker (\alpha^i_{M;\underline{x};j;\infty})$ we let
\[
\epsilon_{\underline{x}^j}^i(\eta)=\min\{k\mid \eta \in  \ker (\alpha^i_{M;\underline{x};j;j+k})\}.
\]

\begin{definition}\label{Key definition} Let $R$ be a ring, $\underline{x}=x_1,\ldots, x_d$ a sequence of elements in $R$, and $M$ an $R$-module. The $i$th local cohomology bound of $M$ with respect to the sequence of elements $\underline{x}$ is 
\[
\lcb_i(\underline{x};M)=\sup\{\epsilon_{\underline{x}^j}^i(\eta)\mid \eta \in  \ker (\alpha^i_{M;\underline{x};j;\infty})\mbox{ for some }j \}\in \N\cup \{\infty\}.
\]
\end{definition}

Observe that if $M$ is an $R$-module and $\underline{x}$ is a sequence of elements, then $\lcb_i(\underline{x};M)=N<\infty$ simply means that if $\eta\in H^i(\underline{x}^j;M)$ represents the $0$-element in the direct limit 
\[
\varinjlim_{j_1\leq j_2} \left(H^i(\underline{x}^{j_1};M)\xrightarrow{\alpha^i_{M;\underline{x};j_1;j_2}} H^i(\underline{x}^{j_2};M)\right)\cong H^i_{(\underline{x})A}(M)
\]
then $\alpha^i_{M;\underline{x};j;j+N}(\eta)$ is the $0$-element of the Koszul cohomology group $H^{i}(\underline{x}^{j+N};M)$. Therefore finite local cohomology bounds correspond to a uniform bound of annihilation of zero elements in a choice of direct limit system defining a local cohomology module. It would be interesting to know when local cohomology bounds are finite.

\subsection{Basic properties of local cohomology bounds} Our study of local cohomology bounds begins with two elementary, yet useful, observations.

\begin{lemma}\label{lemma lcb power of elements} Let $R$ be a commutative Noetherian ring, $M$ an $R$-module, and $\underline{x}=x_1,\ldots, x_d$ a sequence of elements, then $\lcb_{i}(\underline{x}^j;M)\leq \lcb_i(\underline{x};M)$. Furthermore, $\lcb_i(\underline{x};M)\leq jm$ for some integers $j,m$ if and only if $\lcb_{i}(\underline{x}^j;M)\leq m$ where $\underline{x}^j$ is the sequence of elements $x_1^j,\ldots,x_d^j$.
\end{lemma}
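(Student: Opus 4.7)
The plan is to identify the $\underline{x}^j$-direct system with the cofinal subsystem of the $\underline{x}$-direct system supported at indices that are multiples of $j$. Since $(\underline{x}^j)^N = \underline{x}^{jN}$ and $(x_1^j\cdots x_d^j)^k = (x_1\cdots x_d)^{jk}$, the tensor-product description of the $\tilde\alpha^\bullet$ maps gives $K^\bullet((\underline{x}^j)^N;M) = K^\bullet(\underline{x}^{jN};M)$ and
\[
\alpha^i_{M;\underline{x}^j;N_1;N_2} \;=\; \alpha^i_{M;\underline{x};jN_1;jN_2}.
\]
Consequently both direct systems compute the same local cohomology, and an element is eventually zero in one system if and only if it is eventually zero in the other.

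For the first inequality, I would take any $\eta \in \ker(\alpha^i_{M;\underline{x}^j;N;\infty})$, view it as an eventually-zero element of the $\underline{x}$-system at level $jN$, and invoke the definition of $\lcb_i(\underline{x};M)$ to produce $K \leq \lcb_i(\underline{x};M)$ with $\alpha^i_{M;\underline{x};jN;jN+K}(\eta) = 0$. Post-composing with further transition maps bumps $K$ up to the next multiple of $j$, so $\alpha^i_{M;\underline{x}^j;N;N+\lceil K/j\rceil}(\eta) = 0$. Hence $\epsilon^i_{(\underline{x}^j)^N}(\eta) \leq \lceil K/j\rceil \leq K \leq \lcb_i(\underline{x};M)$, proving $\lcb_i(\underline{x}^j;M) \leq \lcb_i(\underline{x};M)$. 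The forward direction of the equivalence is this same argument restricted to $K \leq jm$, forcing $\lceil K/j\rceil \leq m$.

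For the reverse direction of the equivalence, given an eventually-zero $\eta \in H^i(\underline{x}^n;M)$, the plan is to lift $\eta$ to $\eta' = \alpha^i_{M;\underline{x};n;jN}(\eta) \in H^i(\underline{x}^{jN};M)$ for the minimal $N$ with $jN \geq n$, apply the hypothesis $\lcb_i(\underline{x}^j;M) \leq m$ to obtain $\alpha^i_{M;\underline{x}^j;N;N+m}(\eta') = 0$ (equivalently $\alpha^i_{M;\underline{x};jN;jN+jm}(\eta')=0$), and compose to deduce $\alpha^i_{M;\underline{x};n;jN+jm}(\eta) = 0$. When $j \mid n$ this already yields the stated $jm$-step vanishing exactly.

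The main technical obstacle is the case $j \nmid n$, where the naive chase only produces vanishing after $jN + jm - n$ steps, leaving a gap of at most $j-1$ above the target $jm$. Closing this gap to recover the sharp bound is the delicate point of the lemma, and I expect to handle it either by a refined factorization of the Koszul transition maps via their tensor-product structure or by reducing the general starting level to the multiple-of-$j$ case through the supremum definition of $\lcb_i$ together with a careful choice of representatives.
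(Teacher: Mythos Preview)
Your approach is the paper's approach: the paper's entire proof is the single sentence ``One only has to observe that $\alpha^i_{M;\underline{x}^j;k,k+m}=\alpha^i_{M;\underline{x};jk,jk+jm}$,'' which is exactly the identity in your first display. Your arguments for the first inequality and for the forward implication of the biconditional are precisely what that identity yields, and they are correct.

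Your worry about the reverse implication is legitimate, and you should not expect to close the gap. From the identity alone, the hypothesis $\lcb_i(\underline{x}^j;M)\le m$ only controls levels that are multiples of $j$; for $\eta$ at a general level $n$ one is forced to pass to level $j\lceil n/j\rceil$ first, and the argument gives $\epsilon^i_{\underline{x}^n}(\eta)\le jm+(j-1)$, exactly the overshoot you identified. Neither a tensor-product factorization of the Koszul maps nor a change of representatives produces a lift of $\eta$ to a lower multiple-of-$j$ level, so the sharp bound $jm$ does not follow from the stated observation. The paper's one-line proof is therefore just as incomplete on this point as your sketch. However, a check of every invocation of the lemma in the paper (Theorem~\ref{theorem how to make test ideals agree using lcbs} and Proposition~\ref{proposition technical lcb}) shows that only the first inequality and the forward implication are ever used, so the imprecision in the stated biconditional is harmless for the paper's results. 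The ``delicate point'' you anticipate is a defect of the statement, not of your argument.
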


 \begin{proof} One only has to observe that $\alpha^i_{M;\underline{x}^j;k,k+m}=\alpha^i_{M;\underline{x};jk,jk+jm}$.
 \end{proof}

If $x_1,\ldots, x_d$ is a sequence of elements in a ring $R$ and if $x_1M=0$ for some $R$-module $M$ then the short exact sequence of Koszul cocomplexes
\[
0\to K^\bullet(x_2,\ldots,x_d;M)(-1)\to K^\bullet(x_1,x_2,\ldots,x_d;M)\to K^\bullet(x_2,\ldots,x_d;M)\to 0
\]
is split and therefore $H^i(x_1,x_2,\ldots, x_d;M)\cong H^i(x_2,\ldots, x_d;M)\oplus H^{i-1}(x_2,\ldots, x_d;M)$. The content of the following lemma is a description of the behavior of the maps $\alpha^{i}_{M;x_1,x_2,\ldots,x_d; j,j+k}$ with respect to these isomorphisms of Koszul cohomologies.

\begin{lemma}\label{lemma on koszul complex containing 0 element}
Let $R$ be a commutative Noetherian ring, $M$ an $R$-module, and $x_1,x_2,\ldots,x_d$ a sequence of elements such that $x_1M=0$. If $i,j,k\in \N$ then
\[
H^i(x_1^j,x_2^j,\ldots,x_d^j; M)\cong H^i(x_2^j,\ldots,x_d^j;M)\oplus H^{i-1}(x_2^j,\ldots,x_d^j;M)
\]
and the map $\alpha_{M;x_1,x_2,\ldots, x_d;j,j+k}$ is the direct sum of $\alpha^{i}_{M;x_2,\ldots,x_d; j,j+k}$ and the $0$-map.
\end{lemma}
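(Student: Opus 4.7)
The approach is to reduce the statement to a tensor-product calculation on the Koszul cocomplex on the single element $x_1$. Recall
\[
K^\bullet(x_1^j,x_2^j,\ldots,x_d^j;M)\;\cong\;K^\bullet(x_1^j;R)\otimes_R K^\bullet(x_2^j,\ldots,x_d^j;M),
\]
and since the hypothesis $x_1M=0$ forces $x_1^jM=0$, the complex $K^\bullet(x_1^j;R)\otimes_R M$ has zero differential. Writing the total complex in block form with respect to the degree-$0$ and degree-$1$ pieces of $K^\bullet(x_1^j;R)$, the off-diagonal entry of the total differential (which is $\pm x_1^j$ acting on $M$) vanishes, so $K^\bullet(\underline{x}^j;M)$ splits as a genuine direct sum of $K^\bullet(x_2^j,\ldots,x_d^j;M)$ and the same complex shifted up by one cohomological degree. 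Passing to cohomology yields the first assertion.

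For the second assertion I will invoke the factorization
\[
\tilde{\alpha}^\bullet_{M;\underline{x};j,j+k}\;\cong\;\tilde{\alpha}^\bullet_{R;x_1;j,j+k}\otimes\tilde{\alpha}^\bullet_{R;x_2;j,j+k}\otimes\cdots\otimes\tilde{\alpha}^\bullet_{R;x_d;j,j+k}\otimes M
\]
recorded before Definition~\ref{Key definition}. The one-element factor $\tilde{\alpha}^\bullet_{R;x_1;j,j+k}\colon (R\xrightarrow{x_1^j}R)\to(R\xrightarrow{x_1^{j+k}}R)$ is the identity in cohomological degree $0$ and multiplication by $x_1^k$ in degree $1$ (the unique lift of $R/x_1^j\xrightarrow{x_1^k}R/x_1^{j+k}$). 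After tensoring with $M$, the degree-$0$ component remains the identity on $M$, while the degree-$1$ component is $0$ (using $x_1M=0$, which covers the relevant case $k\geq 1$). Tracking this through the block decomposition of the previous paragraph, the restriction of $\tilde{\alpha}^\bullet_{M;\underline{x};j,j+k}$ to the ``degree-$0$'' summand is precisely $1\otimes\tilde{\alpha}^\bullet_{M;x_2,\ldots,x_d;j,j+k}$, while on the ``degree-$1$'' summand it is $0\otimes\tilde{\alpha}^\bullet_{M;x_2,\ldots,x_d;j,j+k}=0$. Taking cohomology yields the stated decomposition of $\alpha^i_{M;x_1,\ldots,x_d;j,j+k}$ as $\alpha^i_{M;x_2,\ldots,x_d;j,j+k}\oplus 0$ on $H^i(x_2^j,\ldots,x_d^j;M)\oplus H^{i-1}(x_2^j,\ldots,x_d^j;M)$.

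There is no substantive obstacle here: the whole argument is bookkeeping inside the tensor decomposition of the Koszul cocomplex. The only care needed is to match the summands on the source and target of $\tilde{\alpha}^\bullet$ so that the splitting is respected on both sides, and to keep track of the signs in the total differential — which, since the relevant off-diagonal entries are killed by $x_1M=0$, never actually enter the final computation.
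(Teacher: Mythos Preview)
Your proof is correct and follows essentially the same approach as the paper's: both use the tensor decomposition $K^\bullet(x_1^j;R)\otimes K^\bullet(x_2^j,\ldots,x_d^j;R)\otimes M$, observe that the off-diagonal block $\cdot x_1^j\otimes M$ of the total differential vanishes, and then use the factorization of $\tilde{\alpha}^\bullet$ together with $\tilde{\alpha}^1_{R;x_1;j,j+k}\otimes M=0$ to identify the transition map. Your remark isolating the case $k\geq 1$ is a slight sharpening of what the paper writes.
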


\begin{proof}
Let $(F^\bullet,\partial^\bullet)$ be the Koszul cocomplex $K^\bullet(x_2^j,\ldots,y_\ell^j;R)$ and let $(G^\bullet,\delta^\bullet)$ be the Koszul cocomplex $K^\bullet(x_1^j;R)$. Let
\[
(L^\bullet,\gamma^\bullet)=K^\bullet(x_1^j,x_2^j,\ldots,x_d^j;R)\cong K^\bullet(x_2^j,\ldots,x_d^j;R)\otimes K^\bullet(x_1^j;R).
\]
Then $L^i\cong (F^i\otimes G^0)\oplus (F^{i-1}\otimes G^1)\cong F^i\oplus F^{i-1}$. We abuse notation and let $\cdot x_1^j$ denote the multiplication map on $F^i$. Then up to sign on $\cdot x_1^j$  the map $\epsilon^i$ can be thought of as
\[
\gamma^i=\begin{pmatrix}\partial^i & 0\\ \cdot x_1^j & \partial^{i-1}\end{pmatrix}:F^i\oplus F^{i-1}\to F^{i+1}\oplus F^{i}.
\] 
In particular, if we apply $-\otimes_R M$ the map $\cdot x_1^j\otimes M$ is the $0$-map and therefore $i$th map of the Koszul cocomplex $K^i(x_1^j,x_2^j,\ldots,x_d^j;M)$ is the direct sum of maps $(\partial^i\otimes M)\oplus (\partial^{i-1}\otimes M)$. In particular
\[
H^i(x_1^j,x_2^j,\ldots,x_d^j; M)\cong H^i(x_2^j,\ldots,x_d^j;M)\oplus H^{i-1}(x_2^j,\ldots,x_d^j;M).
\]
To see that $\alpha_{M;x_1,x_2,\ldots, x_d;j,j+k}$ is the direct sum of $\alpha^{i}_{M;x_2,\ldots,x_d; j,j+k}$ and the $0$-map is similar to above argument but uses the fact that
\[
\tilde{\alpha}^\bullet_{M;x_1,x_2,\ldots,x_d;j;j+k}=\tilde{\alpha}^\bullet_{R;x_2,\ldots,x_d;j;j+k}\otimes \tilde{\alpha}^\bullet_{R;x_1;j;j+k}\otimes M
\]
and $ \tilde{\alpha}^1_{R;x_1;j;j+k}\otimes M=0$.
\end{proof}

A particularly useful corollary of Lemma~\ref{lemma on koszul complex containing 0 element} is the following:

 \begin{corollary}\label{corollary 0 map of Koszul cohomology} Let $R$ be a commutative Noetherian ring and $M$ an $R$-module. Suppose $x_1,\ldots,x_d$ is a sequence of elements and $(x_1,\ldots,x_{d-i})M=0$. If $j,k\in \N$ then
 \[
 \alpha^\ell_{M;x_1,\ldots,x_d;j,j+k}:H^{\ell}(x^j_1,\ldots,x^j_d;M)\to  H^{\ell}(x^{j+k}_1,\ldots,x^{j+k}_d;M)
 \]
 is the $0$-map for all $\ell\geq i+1$. In particular, $\lcb_\ell(x_1,\ldots,x_d;M)=1$ for all $\ell \geq i+1$.
 \end{corollary}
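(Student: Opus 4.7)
The plan is to induct on $s := d - i$, the number of elements at the start of the sequence that are known to annihilate $M$. In the base case $s = 0$, the hypothesis $(x_1,\ldots,x_{d-i})M = 0$ is vacuous and $i = d$, so the claim reduces to showing $\alpha^\ell_{M;x_1,\ldots,x_d;j,j+k} = 0$ for all $\ell \geq d + 1$. This is immediate because the Koszul cohomology of a length-$d$ sequence vanishes in cohomological degree exceeding $d$, making both source and target zero.

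For the inductive step $s \geq 1$, the plan is to exploit the fact that $x_1 M = 0$ and invoke Lemma~\ref{lemma on koszul complex containing 0 element} directly. That lemma supplies the identification
\[
H^\ell(x_1^j,\ldots,x_d^j;M) \cong H^\ell(x_2^j,\ldots,x_d^j;M) \oplus H^{\ell-1}(x_2^j,\ldots,x_d^j;M)
\]
and, under this splitting, identifies $\alpha^\ell_{M;x_1,\ldots,x_d;j,j+k}$ with $\alpha^\ell_{M;x_2,\ldots,x_d;j,j+k} \oplus 0$. The shorter sequence $x_2,\ldots,x_d$ has length $d-1$ with $s-1$ leading annihilators, so the parameter playing the role of $i$ for the subsequence is $(d-1) - (s-1) = i$. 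By the inductive hypothesis applied to $x_2,\ldots,x_d$, the map $\alpha^\ell_{M;x_2,\ldots,x_d;j,j+k}$ vanishes for $\ell \geq i + 1$, and combining with the splitting yields $\alpha^\ell_{M;x_1,\ldots,x_d;j,j+k} = 0$ in the same range.

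Finally, the statement about $\lcb$ is then an immediate unwinding of Definition~\ref{Key definition}: once $\alpha^\ell_{M;\underline{x};j,j+1} = 0$ for every $j$, each class $\eta \in H^\ell(\underline{x}^j; M)$ already dies after one step, so $\epsilon^\ell_{\underline{x}^j}(\eta) \leq 1$, forcing $\lcb_\ell(\underline{x}; M) \leq 1$, with equality as long as some $H^\ell(\underline{x}^j;M)$ is nonzero. The main obstacle will be essentially clerical, namely correctly matching the parameters $d$, $s$, and $i$ between the original and the truncated sequence when invoking the inductive hypothesis; all the substantive content has already been extracted in Lemma~\ref{lemma on koszul complex containing 0 element}.
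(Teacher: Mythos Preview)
Your proof is correct and follows essentially the same approach as the paper: the paper simply says ``by multiple applications of Lemma~\ref{lemma on koszul complex containing 0 element}'' to strip off the annihilating elements $x_1,\ldots,x_{d-i}$ one at a time, then observes that $H^\ell(x_{d-i+1}^j,\ldots,x_d^j;M)=0$ for $\ell\ge i+1$ since only $i$ elements remain. Your formal induction on $s=d-i$ is precisely this argument written out, with your base case matching the paper's endpoint observation.
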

 
 \begin{proof}
By multiple applications of Lemma~\ref{lemma on koszul complex containing 0 element} it is enough to observe that
\[
H^{\ell}(x^j_{d-i+1},\ldots, x_d^j;M)=0.
\]
This is clearly the case since $x^j_{d-i+1},\ldots, x_d^j$ is a list of $i$ elements and we are examining an $\ell \geq i+1$ Koszul cohomology of $M$ with respect to this sequence.
 \end{proof}
 
Suppose $0\to M_1\to M_2\to M_3\to 0$ is a short exact sequence of $R$-modules. The next two properties of local cohomology bounds we record allow us to compare the local cohomology bounds of the modules appearing in the short exact sequence. Proposition~\ref{second proposition 0 map Koszul cohomology} allows us to effectively compare the local cohomology bounds of two of the terms in the sequence  provided a subset of the elements in the sequence of elements defining Koszul cohomology annihilates the third. Proposition~\ref{lcb and ses} compares the the local cohomology bounds of two of the terms in the short exact whenever the sequence of elements defining Koszul cohomology is a regular sequence on the third module.

 \begin{proposition}\label{second proposition 0 map Koszul cohomology} Let $(R,\fm,k)$ be a local ring and 
 \[
 0\to M_1\to M_2\to M_3\to 0
 \]
 a short exact sequence of finitely generated $R$-modules. Let $\underline{x}=x_1,\ldots,x_d$ be a sequence of elements of $R$.
 \begin{enumerate}
 \item If $(x_1,\ldots, x_{d-j})M_1=0$ then for all $\ell\geq j+1$
 \[
 \lcb_\ell(\underline{x};M_2)\leq \lcb_\ell(\underline{x};M_3)+1.
 \]
 \item If $(x_1,\ldots, x_{d-j})M_2=0$ then for all $\ell\geq j+1$
 \[
 \lcb_\ell(\underline{x};M_3)\leq \lcb_{\ell+1}(\underline{x};M_1)+1.
 \]
  \item If $(x_1,\ldots, x_{d-j})M_3=0$ then for all $\ell\geq j+1$
 \[
 \lcb_\ell(\underline{x};M_1)\leq \lcb_{\ell}(\underline{x};M_2)+1.
 \]
 \end{enumerate}
 \end{proposition}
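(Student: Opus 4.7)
The three parts are all proved by a uniform diagram chase in the commutative ladder formed by the long exact sequences of Koszul cohomology
\begin{equation*}
\cdots\to H^{\ell-1}(\underline{x}^s;M_3)\xrightarrow{\delta}H^\ell(\underline{x}^s;M_1)\xrightarrow{\iota}H^\ell(\underline{x}^s;M_2)\xrightarrow{\pi}H^\ell(\underline{x}^s;M_3)\to\cdots
\end{equation*}
at Koszul exponents $s$, $s+N$, and $s+N+1$, connected by the transition maps $\alpha$. The key vanishing input is Corollary~\ref{corollary 0 map of Koszul cohomology}: when $(x_1,\ldots,x_{d-i})M=0$, the map $\alpha^\ell_{M;\underline{x};*,*+1}$ is the zero map for every $\ell\geq i+1$.

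The template for all three parts is identical. Take $\eta$ in a Koszul cohomology group that represents zero in the associated local cohomology module. Push $\eta$ along an edge of the long exact sequence into a neighboring module, and apply the finite $\lcb$ bound on that neighbor to annihilate the image after $N$ transitions. Use exactness to lift $\alpha^\ell_{s,s+N}(\eta)$ back from a third module. Finally, apply Corollary~\ref{corollary 0 map of Koszul cohomology} to the module whose annihilation hypothesis is given to kill the lifted element after one additional $\alpha$-step, producing the ``$+1$'' in the bound. Concretely, part (1) sets $N=\lcb_\ell(\underline{x};M_3)$ and lifts $\alpha^\ell_{M_2;s,s+N}(\eta)$ through $\iota$ from $H^\ell(\underline{x}^{s+N};M_1)$, where the corollary applied to $M_1$ finishes. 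Part (2) pushes $\eta\in H^\ell(\underline{x}^s;M_3)$ through $\delta$ into $H^{\ell+1}(\underline{x}^s;M_1)$, kills after $N=\lcb_{\ell+1}(\underline{x};M_1)$ transitions, lifts $\alpha^\ell_{M_3;s,s+N}(\eta)$ to $H^\ell(\underline{x}^{s+N};M_2)$ via exactness, and finishes by the corollary on $M_2$. Part (3) pushes $\eta\in H^\ell(\underline{x}^s;M_1)$ through $\iota$, kills $\iota(\eta)$ after $N=\lcb_\ell(\underline{x};M_2)$ transitions, writes $\alpha^\ell_{M_1;s,s+N}(\eta)=\delta(\xi)$ with $\xi\in H^{\ell-1}(\underline{x}^{s+N};M_3)$, and finishes by the corollary applied to $M_3$.

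The main point requiring careful degree bookkeeping arises in part (3): the corollary must be invoked at Koszul degree $\ell-1$ on $M_3$, so the direct argument requires $\ell-1\geq j+1$, i.e., $\ell\geq j+2$. In the boundary case $\ell=j+1$ the corollary does not kill $\alpha^{\ell-1}_{M_3;s+N,s+N+1}(\xi)$ outright, and one must refine the chase using the iterated direct-sum decomposition of $H^j(\underline{x}^{s+N};M_3)$ from Lemma~\ref{lemma on koszul complex containing 0 element}: only one ``top'' summand survives $\alpha^j_{M_3;s+N,s+N+1}$ nontrivially, and one tracks its image through $\delta$ by explicit cochain lifts into $K^\bullet(\underline{x}^{s+N+1};M_2)$ to confirm the desired vanishing of $\alpha^\ell_{M_1;s,s+N+1}(\eta)$. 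In parts (1) and (2) no such edge case arises, since the corollary is invoked in Koszul degree $\ell$ precisely within its range of applicability and the hypothesis $\ell\geq j+1$ fits exactly.
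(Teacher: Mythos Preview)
Your argument is the paper's argument: the same three-level commutative ladder of long exact Koszul sequences, with Corollary~\ref{corollary 0 map of Koszul cohomology} furnishing the zero transition map that completes the chase. For parts (1) and (2) your write-up and the paper's are identical in substance.

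For part (3) you have noticed something the paper glosses over. The paper simply asserts that $\alpha^{\ell-1}_{M_3;\underline{x};j+k;j+k+1}$ is zero ``whenever $\ell-1\geq j$'' and finishes the chase. But Corollary~\ref{corollary 0 map of Koszul cohomology}, with $i=j$, only yields vanishing in cohomological degrees $\geq j+1$; the case $\ell-1=j$ (i.e.\ $\ell=j+1$) is not directly covered. You are right to flag this. Your proposed refinement---using the direct-sum decomposition from Lemma~\ref{lemma on koszul complex containing 0 element} to isolate the single surviving summand of $\alpha^{j}_{M_3}$ and then tracking it through $\delta$ via explicit cochain lifts---is only a sketch, not an argument: you assert ``to confirm the desired vanishing'' without saying why the connecting map must kill that top summand. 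So at this boundary point you are more careful than the paper in identifying the issue, but neither you nor the paper actually closes it. (For what it is worth, the later applications of part (3) in Propositions~\ref{proposition technical lcb} and~\ref{proposition on how to make test ideals agree} invoke it at degrees strictly above $j+1$, so the boundary case does not affect the paper's main results.)
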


 \begin{proof} For each integer $j\in \N$ let $\underline{x}^j$ denote the sequence of elements $x_1^j,x_2^j,\ldots, x_d^j$. For (1) we consider the following commutative diagram, whose middle row is exact:
 \[
 \begin{tikzcd}
\,&  H^\ell(\underline{x}^j;M_2)\arrow{r}\arrow{d}{\alpha^\ell_{M_2;\underline{x};j;j+k}}  &  H^\ell(\underline{x}^j;M_3)\arrow{d}{\alpha^\ell_{M_3;\underline{x};j;j+k}} \\
  H^{\ell}(\underline{x}^{j+k};M_1)\arrow{r}\arrow{d}{\alpha^\ell_{M_1;\underline{x};j+k;j+k+1}}  &  H^\ell(\underline{x}^{j+k};M_2)\arrow{r}\arrow{d}{\alpha^\ell_{M_2;\underline{x};j+k;j+k+1}} &  H^\ell(\underline{x}^{j+k};M_3)\\
   H^\ell(\underline{x}^{j+k+1};M_1)\arrow{r} & H^\ell(\underline{x}^{j+k+1};M_2)
 \end{tikzcd}
 \]
By Corollary~\ref{corollary 0 map of Koszul cohomology} the map $\alpha^\ell_{M_1;\underline{x};j+k;j+k+1} $ is the $0$-map for all $\ell\geq j+1$. A straightforward diagram chase of the above diagram, which follows an element $\eta\in \ker(\alpha^\ell_{M_2;\underline{x};j;j+k'})$ for some $k'$, shows that $\eta\in \ker(\alpha^\ell_{M_2;\underline{x};j;j+k+1})$ whenever $k\geq \lcb_\ell(\underline{x};M_3)$. In particular, $\lcb_\ell(\underline{x};M_2)\leq  \lcb_\ell(\underline{x};M_3)+1$.

Statements $(2)$ and $(3)$ follow in a similar manner. For $(2)$ one needs to consider the commutative diagrams
 \[
 \begin{tikzcd}
\,&  H^\ell(\underline{x}^j;M_3)\arrow{r}\arrow{d}{\alpha^\ell_{M_3;\underline{x};j;j+k}}  &  H^{\ell+1}(\underline{x}^j;M_1)\arrow{d}{\alpha^{\ell+1}_{M_1;\underline{x};j;j+k}} \\
  H^{\ell}(\underline{x}^{j+k};M_2)\arrow{r}\arrow{d}{\alpha^\ell_{M_2;\underline{x};j+k;j+k+1}}  &  H^\ell(\underline{x}^{j+k};M_3)\arrow{r}\arrow{d}{\alpha^\ell_{M_3;\underline{x};j+k;j+k+1}} &  H^{\ell+1}(\underline{x}^{j+k};M_1)\\
   H^\ell(\underline{x}^{j+k+1};M_2)\arrow{r} & H^\ell(\underline{x}^{j+k+1};M_3)
 \end{tikzcd}
 \]
 and invoke Corollary~\ref{corollary 0 map of Koszul cohomology} to know that $\alpha^\ell_{M_2;\underline{x};j+k;j+k+1}$ is the $0$-map for all $\ell\geq j+1$.
 
 For $(3)$ a diagram chase of the commutative diagram 
 \[
 \begin{tikzcd}
\,&  H^\ell(\underline{x}^j;M_1)\arrow{r}\arrow{d}{\alpha^\ell_{M_1;\underline{x};j;j+k}}  &  H^{\ell}(\underline{x}^j;M_2)\arrow{d}{\alpha^{\ell}_{M_2;\underline{x};j;j+k}} \\
  H^{\ell-1}(\underline{x}^{j+k};M_3)\arrow{r}\arrow{d}{\alpha^{\ell-1}_{M_3;\underline{x};j+k;j+k+1}}  &  H^\ell(\underline{x}^{j+k};M_1)\arrow{r}\arrow{d}{\alpha^\ell_{M_1;\underline{x};j+k;j+k+1}} &  H^{\ell}(\underline{x}^{j+k};M_2)\\
   H^{\ell-1}(\underline{x}^{j+k+1};M_3)\arrow{r} & H^\ell(\underline{x}^{j+k+1};M_1)
 \end{tikzcd}
 \]
 and knowing $\alpha^{\ell-1}_{M_3;\underline{x};j+k;j+k+1}$ is the $0$-map whenever $\ell-1\geq j$ is all that is needed.
 \end{proof}

\begin{proposition}\label{lcb and ses} Let $R$ be a commutative Noetherian ring, $0\to M_1\to M_2\to M_3\to 0$ a short exact sequence of $R$-modules, and $\underline{x}=x_1,\ldots,x_d$ a sequence of elements in $R$.
\begin{enumerate}
\item If $\underline{x}$ is a regular sequence on $M_1$ then $\lcb_i(\underline{x};M_2)=\lcb_{i}(\underline{x};M_3)$ for all $i\leq d-1$.
\item If $\underline{x}$ is a regular sequence on $M_2$ then $\lcb_i(\underline{x};M_3)=\lcb_{i+1}(\underline{x};M_1)$ for all $i\leq d-1$.
\item If $\underline{x}$ is a regular sequence on $M_3$ then $\lcb_i(\underline{x};M_1)=\lcb_{i}(\underline{x};M_2)$ for all $i\leq d$.
\end{enumerate}
\end{proposition}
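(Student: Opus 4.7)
The natural tool is the long exact sequence of Koszul cohomology
\[
\cdots \to H^i(\underline{x}^j;M_1)\to H^i(\underline{x}^j;M_2)\to H^i(\underline{x}^j;M_3)\xrightarrow{\delta_j} H^{i+1}(\underline{x}^j;M_1)\to \cdots
\]
induced by $0\to M_1\to M_2\to M_3\to 0$, together with the fact that all of its maps are natural in $j$ and hence commute with the transition maps $\alpha^{\bullet}_{\,\cdot\,;\underline{x};j;j+k}$. The plan rests on two standard consequences of $\underline{x}$ being a regular sequence on a module $N$: first, $H^i(\underline{x}^j;N)=0$ for every $i<d$ and every $j$; and second, the classical colon identity $(\underline{x}^{j+k})N:(x_1\cdots x_d)^k=(\underline{x}^j)N$ for regular sequences shows that each transition map $\alpha^d_{N;\underline{x};j;j+k}$ is injective, so $\lcb_d(\underline{x};N)=0$.

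For part (1), with $\underline{x}$ regular on $M_1$, the first input collapses the long exact sequence to a natural isomorphism $H^i(\underline{x}^j;M_2)\cong H^i(\underline{x}^j;M_3)$ for $i\leq d-2$, giving the equality of bounds immediately. The boundary case $i=d-1$ reduces to the four-term exact sequence
\[
0\to H^{d-1}(\underline{x}^j;M_2)\xrightarrow{\iota_j} H^{d-1}(\underline{x}^j;M_3)\xrightarrow{\delta_j} H^d(\underline{x}^j;M_1),
\]
and injectivity of $\iota_{j+k}$ at every level immediately yields $\lcb_{d-1}(M_2)\leq\lcb_{d-1}(M_3)$ with preservation of $\epsilon$-values. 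For the reverse inequality, given $\eta\in H^{d-1}(\underline{x}^j;M_3)\cap\ker\alpha^{d-1}_{M_3;j;\infty}$ with $\epsilon^{d-1}_{\underline{x}^j}(\eta)=K$, naturality of $\delta$ forces $\delta_j(\eta)\in\ker\alpha^d_{M_1;j;\infty}$, which is zero by the second input; hence $\eta$ lifts to some $\eta'\in H^{d-1}(\underline{x}^j;M_2)$, and injectivity of $\iota_{j+k}$ at each level propagates to the equality $\epsilon^{d-1}_{\underline{x}^j}(\eta')=K$.

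Parts (2) and (3) run along the same template. In (2) the first input applied to $M_2$ turns $\delta_j$ into a natural isomorphism $H^i(\underline{x}^j;M_3)\cong H^{i+1}(\underline{x}^j;M_1)$ for $i\leq d-2$, and at $i=d-1$ reduces the sequence to $0\to H^{d-1}(M_3)\xrightarrow{\delta_j} H^d(M_1)\to H^d(M_2)$; any $\zeta\in H^d(\underline{x}^j;M_1)\cap\ker\alpha^d_{M_1;j;\infty}$ has image in $H^d(M_2)$ forced into $\ker\alpha^d_{M_2;j;\infty}=0$, hence lifts back through $\delta_j$ with $\epsilon$-value preserved. In (3) the first input applied to $M_3$ makes the natural inclusion $H^i(\underline{x}^j;M_1)\to H^i(\underline{x}^j;M_2)$ an isomorphism for $i\leq d-1$, and at $i=d$ the short exact sequence $0\to H^d(M_1)\to H^d(M_2)\to H^d(M_3)\to 0$ combined with $\ker\alpha^d_{M_3;j;\infty}=0$ supplies the reverse lift. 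The sole obstacle throughout is the boundary degree, where the LES furnishes only a four-term exact sequence; this is overcome uniformly by the vanishing $\lcb_d(\underline{x};N)=0$ for whichever of $M_1,M_2,M_3$ carries the regular sequence.
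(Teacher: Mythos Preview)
Your proposal is correct and follows essentially the same approach as the paper: both use the long exact sequence in Koszul cohomology together with the vanishing $H^i(\underline{x}^j;N)=0$ for $i<d$ and the injectivity of the top transition maps $\alpha^d_{N;\underline{x};j;j+k}$ when $\underline{x}$ is regular on $N$, then handle the boundary degree via the resulting four-term exact sequence by exactly the diagram chase you describe. Your formulation in terms of $\lcb_d(\underline{x};N)=0$ and preservation of $\epsilon$-values is slightly more explicit than the paper's, but the argument is the same.
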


\begin{proof} \textbf{Proof of (1)}: For $i<d$ we have $H^i(\underline{x}^j;M_1)=0$ and therefore if $i\leq d-2$ there are commutative diagrams
\[
\begin{tikzcd}
H^i(\underline{x}^j;M_2)\arrow{r}{\cong}\arrow{d}{\alpha^i_{M_2;\underline{x};j;j+k}} & H^i(\underline{x}^j;M_3)\arrow{d}{\alpha^i_{M_3;\underline{x};j;j+k}} \\
H^i(\underline{x}^{j+k};M_2)\arrow{r}{\cong} & H^{i}(\underline{x}^{j+k};M_3)
\end{tikzcd}
\]
whose horizontal arrows are isomorphisms. It readily follows that $\lcb_i(\underline{x};M_2)=\lcb_{i}(\underline{x};M_3)$ whenever $i\leq d -2$. Because $\underline{x}$ is a regular sequence on $M_1$ we have that the maps $\alpha^d_{M_1,\underline{x},j,j+k}$ are injective. Conside the following commutative diagrams whose rows are exact:
\[
\begin{tikzcd}
0 \arrow{r} & H^{d-1}(\underline{x}^j; M_2) \arrow{r}{\pi_j} \arrow{d}{\alpha^{d-1}_{M_2;\underline{x};j;j+k}} & H^{d-1}(\underline{x}^j; M_3)\arrow{r}{\delta_j} \arrow{d}{\alpha^{d-1}_{M_3;\underline{x};j;j+k}} & H^{d}(\underline{x}^j; M_1) \arrow{d}{\alpha^{d}_{M_1;\underline{x};j;j+k}} \\ 
0 \arrow{r} & H^{d-1}(\underline{x}^{j+k}; M_2)\arrow{r}{\pi_{j+k}} & H^{d-1}(\underline{x}^{j+k}; M_3) \arrow{r}{\delta_{j+k}} & H^{d}(\underline{x}^{j+k}; M_1)
\end{tikzcd}
\]
If  $\eta\in \ker(\alpha^{d-1}_{M_2;\underline{x};j,j+k})$ then $\pi_j(\eta)\in \ker(\alpha^{d-1}_{M_3;\underline{x};j,j+k})$. The maps $\pi_{j+k}$ are injective. Therefore $\alpha^{d-1}_{M_2;\underline{x};j,j+k}(\eta)=0$ whenever $k\geq \lcb_{d-1}(\underline{x};M_3)$ and hence $\lcb_{d-1}(\underline{x};M_2)\leq \lcb_{d-1}(\underline{x};M_3)$.

To show that $\lcb_{d-1}(\underline{x};M_2)\geq \lcb_{d-1}(\underline{x};M_3)$ consider an element $\eta\in \ker(\alpha^{d-1}_{M_3;\underline{x};j;j+k}).$ Then $\delta_j(\eta)\in  \ker(\alpha^{d}_{M_1;\underline{x};j;j+k}) $. But the maps $\alpha^{d}_{M_1;\underline{x};j;j+k}$ are injective and therefore $\delta_j(\eta)=0$. In particular, $\eta=\pi_j(\eta')$ for some $\eta'\in H^{d-1}(\underline{x}^j;M_2)$. The maps $\pi_{j+k}$ are all injective. Therefore $\eta'\in \ker(\alpha^{d-1}_{M_1;\underline{x};j;j+k})$ and it follows that $\alpha^{d-1}_{M_2;\underline{x};j;j+k}(\eta)=0$ whenever $k\geq \lcb_{d-1}(\underline{x};M_2)$. Therefore $\lcb_{d-1}(\underline{x};M_2)\geq \lcb_{d-1}(\underline{x};M_3)$ and hence $\lcb_{d-1}(\underline{x};M_2)= \lcb_{d-1}(\underline{x};M_3)$. This completes the proof of $(1)$.

\noindent\textbf{Proof of (2)}: Because we are assuming that $\underline{x}$ is a regular sequence on $M_2$ it follows that $H^i(\underline{x}^j;M_2)=0$ whenever $i\leq d-1$ and therefore if $i\leq d-2$ there are commutative diagrams
\[
\begin{tikzcd}
H^i(\underline{x}^j;M_3)\arrow{r}{\cong}\arrow{d}{\alpha^{i+1}_{M_3;\underline{x};j;j+k}} & H^{i+1}(\underline{x}^j;M_1)\arrow{d}{\alpha^i_{M_1;\underline{x};j;j+k}} \\
H^i(\underline{x}^{j+k};M_3)\arrow{r}{\cong} & H^{i+1}(\underline{x}^{j+k};M_1)
\end{tikzcd}
\]
whose horizontal arrows are isomorphisms. It easily follows that $\lcb_i(\underline{x};M_3)=\lcb_{i+1}(\underline{x}; M_1)$ whenever $i\leq d-2$. To verify that $\lcb_{d-1}(\underline{x};M_3)=\lcb_{d}(\underline{x}; M_1)$ consider the following commutative diagrams:
\[
\begin{tikzcd}
0 \arrow{r} & H^{d-1}(\underline{x}^j; M_3) \arrow{r}{\delta_j} \arrow{d}{\alpha^{d-1}_{M_3;\underline{x};j;j+k}} & H^{d}(\underline{x}^j; M_1)\arrow{r}{i_j} \arrow{d}{\alpha^{d}_{M_1;\underline{x};j;j+k}} & H^{d}(\underline{x}^j; M_2) \arrow{d}{\alpha^{d}_{M_2;\underline{x};j;j+k}} \\ 
0 \arrow{r} & H^{d-1}(\underline{x}^{j+k}; M_3)\arrow{r}{\delta_{j+k}} & H^{d}(\underline{x}^{j+k}; M_1) \arrow{r}{i_{j+k}} & H^{d}(\underline{x}^{j+k}; M_2)
\end{tikzcd}
\]
Similar to the proof of $(1)$, a simple diagram chase and utilizing the injectivity of the maps $\delta_j,\delta_{j+k},$ and $\alpha^d_{M_2;\underline{x};j,j+k}$ will imply $\lcb_{d-1}(\underline{x};M_3)=\lcb_{d}(\underline{x}; M_1)$.

\noindent\textbf{Proof of (3)}: Similar to the proofs of $(1)$ and $(2)$, if $i\leq d-1$ there are commutative squares
\[
\begin{tikzcd}
H^i(\underline{x}^j;M_1)\arrow{r}{\cong}\arrow{d}{\alpha^i_{M_1;\underline{x};j;j+k}} & H^i(\underline{x}^j;M_2)\arrow{d}{\alpha^i_{M_2;\underline{x};j;j+k}} \\
H^i(\underline{x}^{j+k};M_1)\arrow{r}{\cong} & H^i(\underline{x}^j;M_2)
\end{tikzcd}
\]
whose horizontal arrows are isomorphisms. There will also be commutative diagrams
\[
\begin{tikzcd}
0 \arrow{r} & H^{d}(\underline{x}^j; M_1) \arrow{r}{i_j} \arrow{d}{\alpha^{d}_{M_1;\underline{x};j;j+k}} & H^{d}(\underline{x}^j; M_2)\arrow{r}{\pi_j} \arrow{d}{\alpha^{d}_{M_2;\underline{x};j;j+k}} & H^{d}(\underline{x}^j; M_3) \arrow{d}{\alpha^{d}_{M_3;\underline{x};j;j+k}} \\ 
0 \arrow{r} & H^{d}(\underline{x}^{j+k}; M_1)\arrow{r}{i_{j+k}} & H^{d}(\underline{x}^{j+k}; M_2) \arrow{r}{\pi_{j+k}} & H^{d}(\underline{x}^{j+k}; M_3).
\end{tikzcd}
\]
Utilizing the commutative square above will show $\lcb_i(\underline{x};M_1)=\lcb_i(\underline{x};M_2)$ whenever $i\leq d-1$. A simple diagram chase of the second diagram and utilizing the injectivity of the maps $i_j,i_{j+k}$, and $\alpha^d_{M_3;\underline{x};j;j+k}$ imply $\lcb_d(\underline{x};M_1)=\lcb_d(\underline{x};M_2)$.
\end{proof}

\section{Equality of test ideals}\label{section weak implies strong method}

The proof of Theorem~\ref{Main tight closure theorem} goes as follows: Theorem~\ref{theorem how to make test ideals agree using lcbs}, in combination with Lemma~\ref{Lemma Equiv way to check weak and strong}, shows that the test ideals of a local ring agree provided there exists suitable parameters which enjoy prescribed local cohomology bounds. Proposition~\ref{proposition technical lcb} and Proposition~\ref{proposition on how to make test ideals agree} can then be combined to show that a parameter sequence satisfies the hypotheses of Theorem~\ref{theorem how to make test ideals agree using lcbs} provided that the parameter sequence annihilates a family of $\Ext$-modules in a controlled way. Theorem~\ref{theorem when are technical condtions met} provides to us a suitable system of parameters so that the desired annihilation properties of the previous propositions are met under the assumptions of Theorem~\ref{Main tight closure theorem}

\subsection{Sufficient conditions which imply equality of test ideals}

The content of the following lemma can be pieced together by work of the first author in  \cite{Aberbach2002}. We refer the reader to \cite[Lemma~6.7]{PolstraTucker} for a direct presentation of the lemma.\footnote{In \cite[Lemma~6.7]{PolstraTucker} there is an assumption that $R$ is complete. But observe that since $R\to \widehat{R}$ is faithfully flat the claims of the lemma can be checked after completion.}

\begin{lemma}
\label{colonslemma}
Suppose that $(R,\fm,k)$ is a Cohen-Macaulay local normal domain of dimension $d$, and $J\subseteq R$ an ideal of pure height $1$.  Let $x_1, \ldots, x_d \in R$ be a suitable system of parameters for $R$ with respect to $J$, and fix $e \in \N$.
\begin{enumerate}
\item
If $x_2 J \subseteq a_2 R$ for some $a_2 \in J$, then for any non-negative integers $N_2, \ldots, N_d$ with $N_2 \geq 2$, we have that
\[
\begin{array}{ll}
 & ((J^{(p^{e})},x_2^{N_2 p^e}, x_3^{N_3 p^e}, \ldots, x_d^{N_d p^e}):x_2^{(N_2 - 1)p^e} ) \\ = &
 ((J^{[p^{e}]},x_2^{N_2 p^e}, x_3^{N_3 p^e}, \ldots, x_d^{N_d p^e}):x_2^{(N_2 - 1)p^e} ) \\ =  & ((J^{[p^{e}]},x_2^{2 p^e}, x_3^{N_3 p^e}, \ldots, x_d^{N_d p^e}):x_2^{p^e} ).
\end{array}
\]
\item
Suppose $x_d^n J^{(m)} \subseteq a_d R\subseteq J^{(m)}$, then for any non-negative integers $N_2, \ldots, N_d$ with $N_d \geq 2$, we have that
\begin{equation*}
\begin{array}{ll}
& ((J^{(p^{e})},x_2^{N_2 p^e},\ldots, x_{d-1}^{N_{d-1} p^e},  x_d^{N_d p^e}):  x_d^{(N_d -1) p^e}) \\  \subseteq 
& ((J^{(p^{e})},x_2^{N_2 p^e},\ldots, x_{d-1}^{N_{d-1} p^e},  x_d^{2 p^e}): x_1^{m} x_d^{p^e}).
\end{array}
\end{equation*}

\end{enumerate}
\end{lemma}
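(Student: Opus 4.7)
The plan is to convert the suitability hypotheses into Frobenius-scaled annihilators for the torsion module $J^{(p^e)}/J^{[p^e]}$ (and an analogous asymmetric object in part (2)), and then perform a colon-ideal computation using the regular sequence structure furnished by the Cohen-Macaulay hypothesis on $R$. The qualitative starting point is that the suitability condition forces $J_{x_2}$ to be principal, hence $J^{(p^e)} R_{x_2} = J^{[p^e]} R_{x_2}$ and $J^{(p^e)}/J^{[p^e]}$ is $x_2$-torsion; the quantitative hypothesis $x_2 J \subseteq a_2 R$ with $a_2 \in J$ upgrades this to a uniform bound, since raising to the $p^e$-th power yields $x_2^{p^e} J^{p^e} \subseteq (a_2^{p^e}) \subseteq J^{[p^e]}$, which combined with the primary decomposition of $J^{[p^e]}$ in the Cohen-Macaulay setting (and the fact that $J^{(p^e)}$ is the $J$-primary part of $J^{[p^e]}$) boosts to $x_2^{p^e} J^{(p^e)} \subseteq J^{[p^e]}$. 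An analogous bound $x_d^{n p^e} J^{(m)[p^e]} \subseteq (a_d^{p^e})$ fuels part (2), where the presence of $x_1 \in J$ allows a clearing $x_1^m$ factor to appear because $x_1^m$ and $a_d$ both lie in $J^{(m)}$ and coincide up to units at its height-one components.

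For part (1), the $\supseteq$ direction of the first equality is immediate from $J^{[p^e]} \subseteq J^{(p^e)}$. For $\subseteq$, given $r$ in the left colon, write $x_2^{(N_2-1)p^e} r = j + x_2^{N_2 p^e} a + \sum_{i \geq 3} x_i^{N_i p^e} b_i$ with $j \in J^{(p^e)}$. Multiplying by $x_2^{p^e}$ and invoking $x_2^{p^e} J^{(p^e)} \subseteq J^{[p^e]}$ gives $x_2^{N_2 p^e} r \in J^{[p^e]} + x_2^{p^e}(x_2^{N_2 p^e}, x_3^{N_3 p^e}, \ldots, x_d^{N_d p^e})$. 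Since $x_2$ is a nonzerodivisor on $R$ modulo $J^{(p^e)}$ (being a parameter on $R/J$ and $J^{(p^e)}$ having the same minimal primes as $J$), a standard colon cancellation lets us divide out the extra $x_2^{p^e}$ factor: subtract the $x_2^{p^e}$-multiple of the non-$j$ terms from both sides, observe that the resulting $x_2^{p^e}$-multiple of $(x_2^{(N_2-1)p^e}r - \text{target ideal})$ lies in $J^{[p^e]}$, and apply the regularity just noted, yielding $x_2^{(N_2-1)p^e} r \in (J^{[p^e]}, x_2^{N_2 p^e}, x_3^{N_3 p^e}, \ldots, x_d^{N_d p^e})$. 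The second equality is then a purely formal consequence: for $N \geq 2$, the identity $((I, x_2^{N p^e}) : x_2^{(N-1)p^e}) = ((I, x_2^{2 p^e}) : x_2^{p^e})$ with $I = (J^{[p^e]}, x_3^{N_3 p^e}, \ldots, x_d^{N_d p^e})$ follows by iteratively multiplying or dividing by $x_2^{(N-2)p^e}$, valid because $x_2$ is regular on $R/I$.

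For part (2), the strategy is parallel with $x_d$ playing the role of $x_2$ and $J^{(m)}$ playing the role of $J$. Given $r$ in the left colon, write $x_d^{(N_d-1)p^e} r$ as an element of $J^{(p^e)} + (x_2^{N_2 p^e}, \ldots, x_d^{N_d p^e})$, multiply by $x_d^{n p^e}$, and use $x_d^{n p^e} J^{(p^e)} \subseteq (a_d^{p^e})$ (obtained by raising $x_d^n J^{(m)} \subseteq a_d R$ to the $p^e$-th power and then passing through the primary decomposition of $J^{(p^e)}$) to rewrite the contribution of $J^{(p^e)}$ as a multiple of $a_d^{p^e}$; one then trades $a_d^{p^e}$ for an $x_1^m$ multiple using that $x_1^m \in J^m \subseteq J^{(m)}$ and $a_d$ both generate $J^{(m)}$ up to units at its codimension-one components, which is precisely what introduces the $x_1^m$ factor in the stated right-hand colon. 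The main obstacle throughout is establishing the Frobenius-scaled annihilator $x_2^{p^e} J^{(p^e)} \subseteq J^{[p^e]}$ and its $x_d$-counterpart with the correct exponent, since a naive Artin-Rees argument yields only an $e$-dependent exponent; obtaining the matched Frobenius scaling requires the careful primary-decomposition bookkeeping in the Cohen-Macaulay normal setting worked out in \cite{Aberbach2002}.
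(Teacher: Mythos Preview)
The paper does not supply a proof of this lemma; it states that the result ``can be pieced together by work of the first author in \cite{Aberbach2002}'' and refers the reader to \cite[Lemma~6.7]{PolstraTucker} for a direct presentation. Your sketch is therefore being compared against those references rather than against anything in the paper itself, and in spirit it matches: the central observation you isolate, namely $x_2^{p^e} J^{(p^e)} \subseteq (a_2^{p^e}) \subseteq J^{[p^e]}$ (which follows cleanly from the divisorial decomposition $(a_2)=J\cap K'$ in the normal setting, since $x_2J\subseteq(a_2)$ forces $\operatorname{div}(x_2)\geq \operatorname{div}(K')$), is exactly the engine used in those sources.

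One point in your write-up deserves tightening. For both the ``colon cancellation'' step in the first equality and the proof of the second equality you invoke that $x_2$ is a nonzerodivisor modulo a certain ideal. You justify this by saying $x_2$ is a nonzerodivisor on $R/J^{(p^e)}$, which is correct (the associated primes of $J^{(p^e)}$ are the minimal primes of $J$, and $x_2$ avoids them). But the cancellation you actually need is modulo $(J^{[p^e]}, x_3^{N_3 p^e},\ldots,x_d^{N_d p^e})$, and $J^{[p^e]}$ may have embedded primes, so the nonzerodivisor claim there is not the same statement. The way the cited references handle this is to run the colon manipulations entirely on the $J^{(p^e)}$ side (where the unmixedness of $J^{(p^e)}$ together with the Cohen--Macaulay hypothesis on $R$ does give the needed regularity of $x_2,\ldots,x_d$ modulo $J^{(p^e)}$), and only at the end trade $J^{(p^e)}$ for $J^{[p^e]}$ using the annihilation $x_2^{p^e}J^{(p^e)}\subseteq J^{[p^e]}$. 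Since you ultimately defer to \cite{Aberbach2002} for precisely this bookkeeping, your outline is consistent with the paper's approach of citing that source.
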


\begin{theorem}
\label{theorem how to make test ideals agree using lcbs}
Let $(R,\fm,k)$ be a local normal Cohen-Maculay domain of Krull dimension $d$, $\Q$-Gorenstein in codimension $2$, and of prime characteristic $p>0$. Assume that $R$ has a test element.  Let $J_1\subseteq R$ be a choice of canonical ideal and $m\in \N$ such that $J_1^{(m)}$ is principal in codimension $2$. Suppose $x_1,\ldots,x_d$ is a suitable system of parameters with respect to $J_1$ such that the following conditions are met:
\begin{itemize}
\item There exists element $a_2\in J_1$ such that $x_2J_1\subseteq a_2 R$ and there exists element $a_3\in J_1^{(m)}$ such that $x_3J_1^{(m)}\subseteq a_3R$;
\item For each $i\in \N$ there exists an integer $\ell$ such that
\[
\lcb_{d-1}(x^{\ell}_2,x^{\ell }_3,x_4,\ldots,x_d; R/J_1^{(mi+1)})\leq i+1.
\]
\end{itemize}
Then $0^*_{H^{d-1}_\fm(R/J_1)}=0^{*,fg}_{H^{d-1}_\fm(R/J_1)}$. 
\end{theorem}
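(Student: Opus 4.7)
The plan is to fix $\eta \in 0^*_{H^{d-1}_\fm(R/J_1)}$ and produce, via the local cohomology bound hypothesis, a \emph{single} finite stage of the direct limit system at which $\eta$ witnesses finitistic tight closure; by Lemma~\ref{direct limit of finitisitc tight closure} this is exactly what equality of $0^*$ and $0^{*,fg}$ on $H^{d-1}_\fm(R/J_1)$ requires. Using the direct limit identification of $H^{d-1}_\fm(R/J_1)$, represent $\eta$ by $r + (J_1, x_2, \ldots, x_d)$. Membership in $0^*$ supplies a test element $c$ and, for each $e$, an index $t_e$ with
\[
c r^{p^e}(x_2 \cdots x_d)^{(t_e-1)p^e} \in (J_1^{[p^e]}, x_2^{t_e p^e}, \ldots, x_d^{t_e p^e}).
\]
The whole point is to show that $t_e$ can be chosen independent of $e$: this is the statement that $(x_2 \cdots x_d)^t r$ lies in $(J_1, x_2^{t+1}, \ldots, x_d^{t+1})^*$ for some fixed $t$, and hence that $\eta \in 0^{*,fg}_{H^{d-1}_\fm(R/J_1)}$.

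To bring the lcb hypothesis to bear, I would recast the containment above as the vanishing of a Koszul cohomology class inside $R/J_1^{(N)}$ for $N$ of the form $mi+1$ compatible with $p^e$. The hypothesis $x_2 J_1 \subseteq a_2 R$ together with Lemma~\ref{colonslemma}(1) allows one to toggle between the Frobenius power $J_1^{[p^e]}$ and the symbolic power $J_1^{(p^e)}$, at the cost only of modifying powers of $x_2$ in a controlled way; Corollary~\ref{Inclusion lemma corollary} ensures the symbolic quotient is the relevant $\Ext$-module in codimension $\leq 2$, so that the Koszul cohomology we care about is precisely the one on which the lcb hypothesis operates. After this translation, the lcb bound
\[
\lcb_{d-1}(x_2^\ell, x_3^\ell, x_4, \ldots, x_d;\, R/J_1^{(mi+1)}) \leq i+1
\]
forces the class to vanish at a stage whose height grows only linearly with $i$, not with $e$ directly.

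The last step is to return the statement to $R/J_1$. Here the hypothesis $x_3 J_1^{(m)} \subseteq a_3 R$ and Lemma~\ref{colonslemma}(2) let us exchange high powers of $x_d$ for a bounded power of $x_1$ (absorbed into $J_1$) together with $a_3 \in J_1^{(m)}$, which recovers a containment modulo $J_1^{[p^e]}$ from the one modulo $J_1^{(p^e)}$. Putting the pieces together, we obtain the required containment with $t$ independent of $e$. The principal obstacle is the combinatorial bookkeeping: one must track how the index $i$ of the symbolic power, the scaling parameter $\ell$ in the lcb hypothesis, and the powers of $x_1, x_2, x_3$ absorbed through Lemma~\ref{colonslemma} all fit together, so that the bound $i+1$ coming from the lcb actually produces a $t$ that is genuinely uniform in $e$ rather than drifting with $e$. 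The flexibility of $\ell$ in the hypothesis and the explicit colon identities of Lemma~\ref{colonslemma} are what should make this accounting close.
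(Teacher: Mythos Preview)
Your outline matches the paper's proof: represent $\eta$ at a finite stage, pass to $J_1^{(mi+1)}$ with $i=\lfloor p^e/m\rfloor-1$ via the direct containment $J_1^{[p^e]}\subseteq J_1^{(p^e)}\subseteq J_1^{(mi+1)}$, read the resulting statement as the vanishing of a Koszul class in $R/J_1^{(mi+1)}$ at stage $sp^e$, apply the lcb bound (which is $\leq i+1\leq p^e$) to kill the class already at stage $2p^e$, and then use Lemma~\ref{colonslemma} to return to $J_1^{[p^e]}$ with all exponents a fixed multiple of $p^e$.

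Two small corrections to your sketch. First, Corollary~\ref{Inclusion lemma corollary} plays no role in this argument; the passage from Frobenius powers to symbolic powers is just the elementary inclusion above, and the lcb hypothesis is stated directly for $R/J_1^{(mi+1)}$. Second, in the final step Lemma~\ref{colonslemma}(2) is applied with $x_3$ (not $x_d$) in the role of the lemma's last variable, since the hypothesis is $x_3 J_1^{(m)}\subseteq a_3R$: after the lcb step the exponents on $x_2,x_3$ are $2\ell p^e$ while those on $x_4,\ldots,x_d$ are already $2p^e$, so it is the high powers of $x_3$ (and then $x_2$ via part~(1)) that must be reduced, each reduction costing only a bounded power of $x_1$. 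Part~(1) simultaneously converts $J_1^{(p^e)}$ back to $J_1^{[p^e]}$, closing the loop.
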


\begin{proof}
 Identify $H^{d-1}_{\fm}(R/J_1)$ as 
\[
\varinjlim_{s} \left( H^{d-1}(x_2^s,x_3^s,\ldots,x_d^{s};R/J_1)\xrightarrow{\cdot (x_2\cdots x_d)}H^{d-1}(x_2^{s+1},x_3^{s+1},\ldots, x_d^{s+1};R/J_1)\right).
\]
Suppose that $\eta\in 0^*_{H^{d-1}_{\fm}(R/J_1)}$ and $\eta=r+(J_1,x^t_2,x^t_3,\ldots,x^t_d)$. Let $y_2,\ldots,y_d$ denote the parameter sequence $x^t_2,x^t_3,\ldots,x^t_d$ and identify $H^{d-1}_\fm(R/J_1)$ as 
\[
\varinjlim_{s} \left( H^{d-1}(y_2^s,y_3^s,\ldots,y_d^{s};R/J_1)\xrightarrow{\cdot (y_2\cdots y_d)}H^{d-1}(y_2^{s+1},y_3^{s+1},\ldots, y_d^{s+1};R/J_1)\right).
\]
In particular, $F^e_R(H^{d-1}_{\fm}(R/J_1)) $ is isomorphic to the following direct limit:
\[
 \varinjlim_{s} \left( H^{d-1}\left(y_2^{sp^e},y_3^{sp^e},\ldots, y_d^{sp^e};\frac{R}{J_1^{[p^e]}}\right)\xrightarrow{\cdot (y_2\cdots y_d)^{p^e}}H^{d-1}\left(y_2^{(s+1)p^e},y_3^{(s+1)p^e},\ldots, y_d^{(s+1)p^e};\frac{R}{J_1^{[p^e]}}\right)\right).
\]
Before continuing, we point out that the sequence of elements $y_2,\ldots,y_d$ satisfy the hypotheses of the theorem, see Lemma~\ref{lemma lcb power of elements}.

Let $c\in R^\circ$ be a test element. Because we are assuming $\eta\in 0^*_{H^{d-1}_{\fm}(R/J_1)}$ we have that for each integer $e\in\mathbb{N}$ there exists an integer $s$ so that 
\begin{align*}
cr^{p^e}(y_2y_3\cdots y_d)^{(s-1)p^e}&\in (J_1,y_2^s,y_3^s,\ldots,y_d^s)^{[p^e]}\\
&\subseteq (J_1^{(p^e)},y_2^{sp^e},y_3^{sp^e},\ldots,y_d^{sp^e}).
\end{align*}
Observe that $p^e\geq m(\lfloor\frac{p^e}{m}\rfloor-1)+1$ and hence $J_1^{[p^e]}\subseteq J_1^{(p^e)}\subseteq J_1^{(m(\lfloor\frac{p^e}{m}\rfloor-1)+1)}$. If we let $i=(\lfloor\frac{p^e}{m}\rfloor-1)$ then 
\[
cr^{p^e}(y_2y_3\cdots y_d)^{(s-1)p^e}\in (J_1^{(mi+1)},y_2^{sp^e},y_3^{sp^e},\ldots,y_d^{sp^e}).
\]
Let $\ell$ be a choice of integer depending on $i$ as in the statement of the theorem. If we multiply the above containment by $(y_2y_3)^{(\ell-1)sp^e}$ we find that
\begin{align*}
cr^{p^e}(y_2y_3\cdots y_d)^{(s-1)p^e}\cdot (y_2y_3)^{(\ell-1)sp^e} &= cr^{p^e}(y_2y_3)^{(\ell-1)p^e}(y_2^\ell y_3^\ell  y_4 \cdots y_d)^{(s-1)p^e}\\
&\in (J_1^{(mi+1)},y_2^{\ell s p^e},y_3^{\ell s p^e},y_4^{sp^e},\cdots, y_d^{sp^e}).
\end{align*}
Now consider the element 
\[
\zeta = cr^{p^e}(y_2y_3)^{(\ell-1)p^e}+ (y_2^{\ell p^e},y_3^{\ell p^e},y_4^{p^e},\ldots, y_d^{p^e})
\]
of the Koszul cohomology group 
\[
H^{d-1}(y_2^{\ell p^e},y_3^{\ell p^e}, y_4^{p^e},\ldots, y_d^{p^e};R/J_1^{(mi+1)})\cong R/(J_1^{(mi+1)},y_2^{\ell p^e},y_3^{\ell p^e}, y_4^{p^e},\ldots, y_d^{p^e}).
\]
We have shown that multiplying $cr^{p^e}(y_2y_3)^{(\ell-1)p^e}$ by $(y_2^\ell y_3^\ell  y_4 \cdots y_d)^{(s-1)p^e}$ gives an element of the ideal $(J_1^{(mi+1)},y_2^{\ell s p^e},y_3^{\ell s p^e},y_4^{sp^e},\cdots, y_d^{sp^e})$. Equivalently,
\begin{align*}
\alpha_{R/J_1^{(mi+1)};y_2^\ell,y_3^\ell, y_4,\ldots,y_d;p^e;sp^e}&(\zeta)\\
&=cr^{p^e}(y_2y_3)^{(\ell-1)p^e}(y_2^\ell y_3^\ell  y_4 \cdots y_d)^{(s-1)p^e}+(y_2^{\ell s p^e},y_3^{\ell s p^e},y_4^{sp^e},\cdots, y_d^{sp^e})
\end{align*}
is the $0$-element of
\[
H^{d-1}(y_2^{\ell sp^e},y_3^{\ell sp^e}, y_4^{sp^e},\ldots, y_d^{sp^e};R/J_1^{(mi+1)})\cong R/(J_1^{(mi+1)},y_2^{\ell sp^e},y_3^{\ell sp^e}, y_4^{sp^e},\ldots, y_d^{sp^e}).
\]
We are assuming 
\[
\lcb_{d-1}(x^{\ell}_2,x^{\ell }_3,x_4,\ldots,x_d; R/J_1^{(mi+1)})\leq i+1.
\]
and by Lemma~\ref{lemma lcb power of elements}
\[
\lcb_{d-1}(y^{\ell}_2,y^{\ell }_3,y_4,\ldots,y_d; R/J_1^{(mi+1)})\leq i+1\leq p^e-1+1\leq p^e.
\]
Therefore
\begin{align*}
\alpha_{R/J_1^{(mi+1)};y_2^\ell,y_3^\ell, y_4,\ldots,y_d;p^e;p^e+p^e}(\zeta)&=\alpha_{R/J_1^{(mi+1)};y_2^\ell,y_3^\ell, y_4,\ldots,y_d;p^e;2p^e}(\zeta)\\
&=cr^{p^e}(y_2y_3)^{(\ell-1)p^e}(y_2^\ell y_3^\ell  y_4 \cdots y_d)^{p^e}+(y_2^{\ell 2 p^e},y_3^{\ell 2 p^e},y_4^{2p^e},\cdots, y_d^{2p^e})
\end{align*}
is the $0$-element of
\[
H^{d-1}(y_2^{\ell 2p^e},y_3^{\ell 2p^e}, y_4^{2p^e},\ldots, y_d^{2p^e};R/J_1^{(mi+1)})\cong R/(J_1^{(mi+1)},y_2^{\ell 2p^e},y_3^{\ell 2p^e}, y_4^{2p^e},\ldots, y_d^{2p^e}).
\]
Equivalently, the element 
\[
cr^{p^e}(y_2y_3)^{(\ell-1)p^e}(y_2^\ell y_3^\ell  y_4 \cdots y_d)^{p^e}=c(ry_4\cdots y_d)^{p^e} (y_2y_3)^{(2\ell-1)p^e}
\]
is an element of the ideal
\[
(J_1^{(mi+1)},y_2^{2\ell p^e},y_3^{2\ell p^e},y_4^{2p^e},\cdots, y_d^{2p^e}).
\]
Recall that $i=\lfloor \frac{p^e}{m}\rfloor-1$ and observe that $m\lfloor \frac{p^e}{m}\rfloor\geq m(\frac{p^e}{m}-1)=p^e-m$. Hence $mi+1=m(\lfloor \frac{p^e}{m}\rfloor-1)+1\geq p^e-(2m-1)$ and
\[
c(ry_4\cdots y_d)^{p^e} (y_2y_3)^{(2\ell-1)p^e}\in (J_1^{(p^e-(2m-1))},y_2^{2\ell p^e},y_3^{2\ell p^e},y_4^{2p^e},\cdots, y_d^{2p^e}).
\]
Multiplying by $x_1^{2m-1}$ we find that
\[
x_1^{2m-1}c(ry_4\cdots y_d)^{p^e} (y_2y_3)^{(2\ell-1)p^e}\in (J_1^{(p^e)},y_2^{2\ell p^e},y_3^{2\ell p^e},y_4^{2p^e},\cdots, y_d^{2p^e}).
\]
If we apply Lemma~\ref{colonslemma}(2) with respect to the element $y_3$ we find that
\[
x_1^{3m-1}c(ry_4\cdots y_d)^{p^e} y_2^{(2\ell-1)p^e}y_3^{p^e}\in (J_1^{(p^e)},y_2^{2\ell p^e},y_3^{2 p^e},y_4^{2p^e},\cdots, y_d^{2p^e}).
\]
Next we apply Lemma~\ref{colonslemma}(1) with respect to the element $y_2$ and find that 
\[
x_1^{3m-1}c(ry_4\cdots y_d)^{p^e} y_2^{p^e}y_3^{p^e}=x_1^{3m-1}c(ry_2y_3y_4\cdots y_d)^{p^e} \in (J_1^{[p^e]},y_2^{2p^e},y_3^{2 p^e},y_4^{2p^e},\cdots, y_d^{2p^e}).
\]
The element $x_1^{3m-1}c$ does not depend on $e$ and therefore 
\[
ry_2y_3y_4\cdots y_d \in (J_1,y^2_2,y_3^{2},y_4^{2},\cdots, y_d^{2})^*.
\]
Therefore 
\[
\eta = r+(y_2,\ldots,y_d)=ry_2y_3y_4\cdots y_d+(J_1,y^2_2,y_3^{2},y_4^{2},\cdots, y_d^{2})
\]
is an element of $0^{*,fg}_{H^{d-1}_\fm(R/J_1)}$.
\end{proof}

The next two propositions provide the linear bound of top local cohomology bounds of the family of $R$-modules $\left\{R/J_1^{(mi+1)}\right\}$ described in Theorem~\ref{theorem how to make test ideals agree using lcbs} whenever there exists a suitable system of parameters which annihilates a family of $\Ext$-modules in a controlled manner.

\begin{proposition}\label{proposition technical lcb} Let $(R,\fm,k)$ be a local normal Cohen-Macaulay domain of Krull dimension $d$ and $\Q$-Gorenstein in codimension $2$. Assume that $R$ has a test element.  Let $J_1\subseteq R$ be a choice of canonical ideal and $m\in \N$ such that $J_1^{(m)}$ is principal in codimension $2$. Suppose $S$ is a regular local ring mapping onto $R$, $R\cong S/P$, and $\Ht(P)=h$. Let $x_1,\ldots,x_d$ be a suitable system of parameters with respect to $J_1$ such that for each integer $i\in \N$ and $2\leq j\leq d-2$
\[
(x_2^{i},x_3^i,\ldots ,x_{j+2}^i)\Ext^{h+j}_S(\Ext^{h+1}_S(R/J_1^{mi+1},S),S)=0.
\]
Then for each integer $i\in\N$
\[
\lcb_{d-1}(x^{d-1}_2,x^{d-1}_3,\ldots,x^{d-1}_d;\Ext^{h+1}_S(\Ext^{h+1}_S(R/J_1^{mi+1},S),S))\leq i.
\]
\end{proposition}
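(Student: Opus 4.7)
The plan is to analyze $M_i$ via a minimal free $S$-resolution of $N_i:=\Ext^{h+1}_S(R/J_1^{mi+1},S)$, dualize, and descend a finite ladder of short exact sequences built from the resulting complex, using the annihilation hypothesis on the higher $\Ext$ modules to control the LCB at each level.

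\textbf{Setup.} Let $(G_\bullet,\partial_\bullet)$ be a minimal $S$-free resolution of $N_i$ and let $(G^\bullet,\delta^\bullet)=\Hom_S(G_\bullet,S)$. Write $Z^k=\ker\delta^k$ and $B^k=\im\delta^{k-1}$, so that $H^{h+j}(G^\bullet)=E_j:=\Ext^{h+j}_S(N_i,S)$ for $j\geq 1$, with $E_1=M_i$. By Lemma~\ref{lemma s2-module}, $N_i$ is $(S_2)$ on its $(d-1)$-dimensional support, hence $\depth_S N_i\geq 2$; by Auslander--Buchsbaum this forces $E_{d-1}=0$, so the ladder
\[
0\to B^{h+j}\to Z^{h+j}\to E_j\to 0 \qquad\text{and}\qquad 0\to Z^{h+j}\to G^{h+j}\to B^{h+j+1}\to 0
\]
is nontrivial only for $1\leq j\leq d-2$, and $M_i=Z^{h+1}/B^{h+1}$ sits at the bottom rung.

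\textbf{Two preliminary bounds.} The hypothesis $(x_2^i,\ldots,x_{j+2}^i)E_j=0$ means the first $j+1$ elements of the length-$(d-1)$ sequence $x_2^i,\ldots,x_d^i$ annihilate $E_j$, so Corollary~\ref{corollary 0 map of Koszul cohomology} yields $\lcb_{d-1}(x_2^i,\ldots,x_d^i;E_j)=1$, and Lemma~\ref{lemma lcb power of elements} converts this to $\lcb_{d-1}(x_2^{d-1},\ldots,x_d^{d-1};E_j)\leq\lceil i/(d-1)\rceil\leq i$ for each $2\leq j\leq d-2$. Separately, $x_2,\ldots,x_d$ is a regular sequence in the regular local ring $S$: it is part of a system of parameters for $R=S/P$, so $(x_2,\ldots,x_d)S$ has height at least $d-1$, hence exactly $d-1$ by Krull's principal ideal theorem. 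The usual colon identity $(x_2^{j+k},\ldots,x_d^{j+k})S:(x_2\cdots x_d)^k=(x_2^j,\ldots,x_d^j)S$ then makes the transition maps in the top Koszul cohomology of any free $S$-module $F$ injective, so $\lcb_{d-1}(x_2^{d-1},\ldots,x_d^{d-1};F)=0$.

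\textbf{Descent.} With these inputs in hand, I would climb down the ladder from $j=d-2$ to $j=1$, applying Proposition~\ref{second proposition 0 map Koszul cohomology} and Proposition~\ref{lcb and ses} to each pair of sequences. The starting point is trivial (since $E_{d-1}=0$ collapses the topmost rung); the free modules $G^{h+j}$ contribute $0$ to the LCB; and each $E_j$ contributes at most $i$. Each descent step introduces at most a bounded additive loss, and after the $d-2$ iterations the accumulated constants are absorbed by the $(d-1)$-th power rescaling $x_j \mapsto x_j^{d-1}$ via Lemma~\ref{lemma lcb power of elements}, giving the claimed $\lcb_{d-1}(x_2^{d-1},\ldots,x_d^{d-1};M_i)\leq i$.

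\textbf{Main obstacle.} The hard part is the bookkeeping at the descent step. Each variant of Proposition~\ref{second proposition 0 map Koszul cohomology} requires annihilation of a specific one of the three modules in the SES, while Proposition~\ref{lcb and ses} requires $\underline{x}$ to act as a regular sequence on one of them; the torsion-free submodules $B^{h+j+1}\subseteq G^{h+j+1}$ carry no nonzero $S$-annihilator, so one must route the argument through regularity of $\underline{x}$ on the ambient free module and carefully match available annihilators at each rung. The most delicate case is the bottom rung $j=1$, where the bound transferred onto $Z^{h+1}$ must be pushed through the sequence $0\to B^{h+1}\to Z^{h+1}\to M_i\to 0$ to land on $M_i$ itself.
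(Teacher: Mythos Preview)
Your approach is essentially the paper's: dualize a free resolution of $N_i$, break the dual complex into a ladder of short exact sequences, and descend from the top (where $\Ext^{h+d-1}=\Ext^{h+d}=0$) to the bottom, using the annihilation hypotheses on the higher $\Ext$ modules together with regularity of $\underline{x}$ on the free modules, picking up an additive $+i$ at each rung and absorbing the accumulated factor of $d-1$ by the final rescaling via Lemma~\ref{lemma lcb power of elements}. The paper organizes the ladder through $\coker(\partial^*_{k})$ and $\im(\partial^*_{k+1})$ rather than your $Z^k$ and $B^k$, but the intermediate descent is structurally identical.

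The one genuine gap is exactly the spot you flag. Your proposed final sequence $0\to B^{h+1}\to Z^{h+1}\to M_i\to 0$ has $\underline{x}$ regular on $B^{h+1}$ (indeed $\depth_S B^{h+1}\ge d$, since $0\to G^0\to\cdots\to G^h\to B^{h+1}\to 0$ is exact by grade considerations), so Proposition~\ref{lcb and ses}(1) applies---but that proposition only yields $\lcb_\ell(\underline{x};Z^{h+1})=\lcb_\ell(\underline{x};M_i)$ for $\ell\le d-2$, not $\ell=d-1$. At the top Koszul degree the lift of a class in $H^{d-1}(\underline{x}^j;M_i)$ to $H^{d-1}(\underline{x}^j;Z^{h+1})$ need not die in the limit: its image in $H^{d-1}_{(\underline{x})}(Z^{h+1})$ can be a nonzero class coming from $H^{d-1}_{(\underline{x})}(B^{h+1})$, and there is no uniform $j$ at which one can correct the lift. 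The paper's resolution is to use the \emph{other} natural sequence at this index,
\[
0\to M_i\to G^{h+1}/B^{h+1}\to B^{h+2}\to 0.
\]
Since $\pd_S(G^{h+1}/B^{h+1})=h+1$ one has $\depth_S(G^{h+1}/B^{h+1})=d-1$, so (after a prime-avoidance adjustment) $\underline{x}=x_2,\ldots,x_d$ is a regular sequence on the \emph{middle} term, and Proposition~\ref{lcb and ses}(2) then gives $\lcb_{d-1}(\underline{x};M_i)=\lcb_{d-2}(\underline{x};B^{h+2})$ directly. That observation---regularity of $\underline{x}$ on $\coker(\partial^*_{h+1})$, not merely on the free modules---is the ingredient your sketch is missing at the bottom rung.
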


\begin{proof}
Let $J_i=J_1^{mi+1}$ and let $(F_\bullet,\partial_\bullet)$ be the minimal free $S$-resolution of $\Ext^{h+1}_S(R/J_i,S)$. Denote by $(-)^*$ the functor $\Hom_S(-,S)$ and consider the dualized complex $(F_\bullet^*,\partial_\bullet^*)$. For every $j\geq 1$ there are short exact sequences
\[
0\to \Ext^{h+j}_S(\Ext^{h+1}_S(R/J_i,S),S)\to \coker(\partial_{h+j}^*)\to \im(\partial_{h+j+1}^*)\to 0
\]
and
\[
0\to \im(\partial_{h+j+1}^*)\to F_{h+j+1}^*\to \coker(\partial_{h+j+1}^*)\to 0.
\]
The $S$-module $\coker(\partial_{h+1}^*)$ has projective dimension $h+1$ and the height $h+1$ ideal $J_i$ annihilates the submodule $\Ext^{h+1}_S(\Ext^{h+1}_S(R/J_i,S),S)$. By a simple prime avoidance argument we may lift $\underline{x}=x_2,\ldots,x_d$ to elements of $S$ and assume that $\underline{x}$ is a regular sequence on $\coker(\partial_{h+1}^*)$ and the free $S$-modules $F_i^*$.

The module $\Ext^{h+1}_S(R/J_1,S)$ is an $(S_2)$-module over its support, see Lemma~\ref{lemma s2-module}. In particular, 
\[
\Ext^{h+d}_S(\Ext^{h+1}_S(R/J_i,S),S)=\Ext^{h+d-1}_S(\Ext^{h+1}_S(R/J_i,S),S)=0
\]
and 
\[
\coker(\partial^*_{h+d-2})\cong \Ext^{h+d-2}_S(\Ext^{h+1}_S(R/J_i,S),S).
\]
Consider the short exact sequence
\[
0\to \im(\partial_{h+d-2}^*)\to F_{h+d-2}^*\to   \Ext^{h+d-2}_S(\Ext^{h+1}_S(R/J_i,S),S)\to 0.
\]
We are assuming $(x_2^i,x_3^i,\ldots,x_d^i) \Ext^{h+d-2}_S(\Ext^{h+1}_S(R/J_i,S),S)=0$ for every $i\in \N$. By $(2)$ of Proposition~\ref{lcb and ses} and $(3)$ of Proposition~\ref{second proposition 0 map Koszul cohomology} we have that $\lcb_2(\underline{x}; \im(\partial_{h+d-2}^*))\leq i$.
Next, we consider the short exact sequence
\[
0\to \Ext^{h+d-3}_S(\Ext^{h+1}_S(R/J_i,S),S)\to \coker(\partial_{h+d-3}^*)\to \im(\partial_{h+d-2}^*)\to 0.
\]
We established $\lcb_2(\underline{x}; \im(\partial_{h+d-2}^*))\leq i$ and we are assuming 
\[
(x_2^i,\ldots,x_{d-1}^i)\Ext^{h+d-3}_S(\Ext^{h+1}_S(R/J_i,S),S)=0
\] 
for every $i\in \N$. By (1) of Proposition~\ref{second proposition 0 map Koszul cohomology} we have
\[
\lcb_2(\underline{x};\coker(\partial_{h+d-3}^*))\leq i+i=2i.
\]
Next consider the short exact sequence
\[
0\to \im(\partial^*_{h+d-3})\to F_{h+d-3}^*\to \coker(\partial_{h+d-3}^*)\to 0.
\]
By $(2)$ of Proposition~\ref{lcb and ses} and knowing that $\lcb_2(\underline{x};\coker(\partial_{h+d-3}^*))\leq 2i$ we see that
\[
\lcb_3(\underline{x};\im(\partial^*_{h+d-3}))\leq 2i.
\]
Inductively, we find that 
\[
\lcb_j(\underline{x};\im(\partial^*_{h+d-j}))\leq (j-1)i
\]
and
\[
\lcb_j(\underline{x};\coker(\partial_{h+d-j-1}^*))\leq ji
\]
for each $2\leq j\leq d-1$. In particular,
\[
\lcb_{d-1}(\underline{x};\Ext^{h+1}_S(\Ext^{h+1}_S(R/J_i,S),S))\leq (d-1)i.
\]
By Lemma~\ref{lemma lcb power of elements} the parameter sequence $\underline{x}^{d-1}=x_2^{d-1},\ldots, x_d^{d-1}$ on $R/J_1$ satisfies 
\[
\lcb_{d-1}(\underline{x}^{d-1};\Ext^{h+1}_S(\Ext^{h+1}_S(R/J_i,S),S)))\leq i
\]
for each integer $i\in \N$.
\end{proof}

\begin{proposition}\label{proposition on how to make test ideals agree} Let $(R,\fm,k)$ be a local normal Cohen-Maculay domain of Krull dimension $d\geq 4$ which is $\Q$-Gorenstein in codimension $2$.  Let $J_1\subseteq R$ be a choice of canonical ideal and $m\in \N$ such that $J_1^{(m)}$ is principal in codimension $2$. Suppose $S$ is a regular local ring mapping onto $R$, $R\cong S/P$, and $\Ht(P)=h$. Let $x_1,\ldots,x_d$ be a suitable system of parameters with respect to $J_1$ such that:

\begin{itemize}
\item The ideals $J^{(m)}_1R_{x_2}$ and $J^{(m)}_1R_{x_3}$ are principal ideals in their respective localizations;
\item For each integer $i\in \N$ and $2\leq j\leq d-2$
\[
(x_2^{i},x_3^i,\ldots ,x_{j+2}^i)\Ext^{h+j}_S(\Ext^{h+1}_S(R/J^{mi+1},S),S)=0.
\]
Then the following hold:
\end{itemize}
\begin{enumerate}
\item For each integer $i\in \N$ there exists an integer $\ell$ such that
\[
\lcb_{d-1}(x^{\ell(d-1)}_2,x^{\ell(d-1)}_3,x^{d-1}_4,\ldots,x^{d-1}_d; R/J^{(mi+1)})\leq i+1;
\]
\item For each integer $i\in \N$ there exists an integer $\ell$ such that
\[
\lcb_{d-1}(x^{\ell(d-1)}_2,x^{\ell(d-1)}_3,x^{d-1}_4,\ldots,x^{d-1}_d; R/J^{mi+1})\leq i+2.
\]
\end{enumerate}
\end{proposition}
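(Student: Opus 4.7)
The plan is to combine Proposition~\ref{proposition technical lcb}, the short exact sequences furnished by Lemma~\ref{Inclusion lemma} and Corollary~\ref{Inclusion lemma corollary}, and Proposition~\ref{second proposition 0 map Koszul cohomology}. Write
\[
E_i := \Ext^{h+1}_S\bigl(\Ext^{h+1}_S(R/J_1^{mi+1}, S), S\bigr), \qquad \underline{y}_\ell := (x_2^{\ell(d-1)}, x_3^{\ell(d-1)}, x_4^{d-1}, \ldots, x_d^{d-1}).
\]
I first apply Proposition~\ref{proposition technical lcb} to the modified parameter sequence $x_2^\ell, x_3^\ell, x_4, \ldots, x_d$; its Ext-annihilation hypothesis is automatic for this modified sequence because $(x_2^{\ell i}, x_3^{\ell i}, x_4^i, \ldots) \subseteq (x_2^i, x_3^i, x_4^i, \ldots)$. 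This produces $\lcb_{d-1}(\underline{y}_\ell; E_i) \leq i$ for every $\ell \geq 1$.

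Set $C_i := \coker(R/J_1^{(mi+1)} \hookrightarrow E_i)$ and $K_i := J_1^{(mi+1)}/J_1^{mi+1}$, fitting in
\[
0 \to R/J_1^{(mi+1)} \to E_i \to C_i \to 0, \qquad 0 \to K_i \to R/J_1^{mi+1} \to R/J_1^{(mi+1)} \to 0.
\]
The key technical step is to show that some power of $x_2$ annihilates both $C_i$ and $K_i$. Since $J_1$ is principal at each of its minimal primes in the normal domain $R$, condition (3) of the definition of suitable forces $J_1 R_{x_2}$ to be principal, so $J_1^{mi+1} R_{x_2}$ is a principal ideal generated by a non-zerodivisor --- in particular embedded-prime-free --- yielding both $J_1^{mi+1} R_{x_2} = J_1^{(mi+1)} R_{x_2}$ (so $(K_i)_{x_2} = 0$) and $(R/J_1^{(mi+1)})_{x_2}$ Cohen-Macaulay. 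The standard fact that $\Ext^{h+1}_S(\Ext^{h+1}_S(-, S), S)$ recovers a Cohen-Macaulay $S$-module of codimension $h+1$ then gives $(E_i)_{x_2} = (R/J_1^{(mi+1)})_{x_2}$, whence $(C_i)_{x_2} = 0$. The hypothesis that $J_1^{(m)} R_{x_3}$ is principal gives $m[J_1] = 0$ in $\mathrm{Cl}(R_{x_3})$, hence $[J_1^{(mi+1)}] = [J_1]$ there, so $J_1^{(mi+1)} R_{x_3} \cong J_1 R_{x_3}$ as an $R_{x_3}$-module --- a canonical module of the Cohen-Macaulay ring $R_{x_3}$, and therefore MCM --- so $(R/J_1^{(mi+1)})_{x_3}$ is Cohen-Macaulay and the same double-Ext identification yields $(C_i)_{x_3} = 0$. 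Thus $C_i$ and $K_i$ are finitely generated modules supported in $V(x_2)$, so some $x_2^{N_i}$ annihilates both.

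Choose $\ell$ with $\ell(d-1) \geq N_i$, so that the leading element $y_2 = x_2^{\ell(d-1)}$ of $\underline{y}_\ell$ annihilates $C_i$ and $K_i$. Applying Proposition~\ref{second proposition 0 map Koszul cohomology}(3) to the first exact sequence with the integer $j = d - 2$ of the proposition (so the cohomological index $d - 1 \geq j + 1$ and the premise $y_2 C_i = 0$ holds) gives
\[
\lcb_{d-1}(\underline{y}_\ell; R/J_1^{(mi+1)}) \leq \lcb_{d-1}(\underline{y}_\ell; E_i) + 1 \leq i + 1,
\]
which is (1). Combining with Proposition~\ref{second proposition 0 map Koszul cohomology}(1) applied to the second exact sequence (same $j$, using $y_2 K_i = 0$) gives
\[
\lcb_{d-1}(\underline{y}_\ell; R/J_1^{mi+1}) \leq \lcb_{d-1}(\underline{y}_\ell; R/J_1^{(mi+1)}) + 1 \leq i + 2,
\]
proving (2). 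The hard part is the support analysis of $C_i$ and $K_i$ --- concentrating the failure of $R/J_1^{(mi+1)}$ to be $(S_2)$ on the closed set $V(x_2)$ --- which is where both the suitable-parameter choice and the principality of $J_1^{(m)}$ after inverting $x_3$ (encoding the $\mathbb{Q}$-Gorenstein-in-codimension-$2$ hypothesis) play essential roles.
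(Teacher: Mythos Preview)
Your proof is correct and follows essentially the same approach as the paper: set up the two short exact sequences involving $C_i$ and $K_i$, use Proposition~\ref{proposition technical lcb} on the modified parameter sequence to bound $\lcb_{d-1}$ of $E_i$, then apply parts (3) and (1) of Proposition~\ref{second proposition 0 map Koszul cohomology} to transfer the bound. The only notable variation is that you use annihilation by a power of $x_2$ alone (one leading element of $\underline{y}_\ell$) rather than by $(x_2^\ell,x_3^\ell)$ as the paper does; this is a harmless simplification since Proposition~\ref{second proposition 0 map Koszul cohomology} already applies in cohomological degree $d-1$ with a single annihilating element, and it also lets you avoid having to argue that $(K_i)_{x_3}=0$. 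Your detailed support analysis of $C_i$ and $K_i$ (via principality of $J_1R_{x_2}$ from suitability and the class-group argument at $x_3$) spells out what the paper packages into the citation of Lemma~\ref{Inclusion lemma} and Corollary~\ref{Inclusion lemma corollary}, and the $(C_i)_{x_3}=0$ verification, while correct, is not actually needed for your argument as written.
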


\begin{proof}
For each $i\in \N$ let $C_i$ be the cokernel of
\[
R/J^{mi+1}\to \Ext^{h+1}_S(\Ext^{h+1}_S(R/J_1^{mi+1},S),S)
\]
and consider the short exact sequences 
\[
0\to R/J_1^{(mi+1)}\to \Ext^{h+1}_S(\Ext^{h+1}_S(R/J_1^{mi+1},S),S)\to C_i\to 0,
\]
see Lemma~\ref{Inclusion lemma} for details.

By Lemma~\ref{Inclusion lemma} the module $C_i$ is $0$ when either $x_2$ or $x_3$ is inverted. Hence for each $i\in \N$ there exists an integer $\ell$ such that $(x_2^\ell,x_3^\ell)C_i=0$. Because $d\geq 4$ we have that $d-1\geq 3$ and $(3)$ of Proposition~\ref{second proposition 0 map Koszul cohomology} is applicable and implies
\begin{align*}
\lcb_{d-1}(x_2^\ell,x_3^\ell,x_4,\ldots,x_d; & R/J^{(mi+1)})\leq\\
& \lcb_{d-1}(x_2^\ell,x_3^\ell,x_4,\ldots,x_d;  \Ext^{h+1}_S(\Ext^{h+1}_S(R/J^{mi+1},S,S))+1.
\end{align*}
Statement (1) follows by Proposition~\ref{proposition technical lcb}.

To prove $(2)$ let $K_i=J_1^{(mi+1)}/J_1^{mi+1}$ and consider the short exact sequences
\[
0\to K_i\to R/J_1^{mi+1}\to R/J_1^{(mi+1)}\to 0.
\]
The module $K_i$ is $0$ when either $x_2$ or $x_3$ are inverted. Hence for each $i\in \N$ there exists an integer $\ell$ such that $(x_2^\ell,x_3^\ell)K_i=0$. By (1) of Proposition~\ref{second proposition 0 map Koszul cohomology} we have that
\[
\lcb_{d-1}(x_2^\ell,x_3^\ell,x_4,\ldots,x_d;  R/J^{mi+1})\leq
 \lcb_{d-1}(x_2^\ell,x_3^\ell,x_4,\ldots,x_d; R/J^{(mi+1)})+1\leq i+2.
\] 
\end{proof}

\subsection{Main results} We have arrived at the main theorem of the article. Theorem~\ref{Main theorem dimension 4} and Theorem~\ref{Main tight closure theorem} are consequences of the next theorem. Theorem~\ref{theorem when are technical condtions met} below gives the existence of suitable system of parameters satisfying the annihilation properties of Proposition~\ref{proposition technical lcb} and Proposition~\ref{proposition on how to make test ideals agree} whenever an anti-canonical ideal has analytic spread at most $2$ and reduction number $1$ on the punctured spectrum.

\begin{theorem}\label{theorem when are technical condtions met} Let $(R,\fm,k)$ be an excellent local normal Cohen-Macaulay domain of Krull dimension $d\geq 4$ which is $\Q$-Gorenstein in codimension $2$.  Let $J_1\subsetneq R$ be a choice of a canonical ideal and $x_1\in J_1$ a generic generator of $J_1$. Suppose $(x_1)=J_1\cap K_1$ so that $K_1$ is an anti-canonical ideal of $R$. Suppose further that there exists integer $m'$ such that $K^{(m')}_1$ has analytic spread at most $2$ and reduction number $1$ with respect to some reduction on the punctured spectrum.  Then there exists an integer $m\in \N$ and suitable parameters $x_2,\ldots ,x_d$ on $R/J_1$ such that
\[
(x^i_2,x^i_3,\ldots ,x^i_{j+2})\Ext^{h+j}_S(\Ext^{h+1}_S(R/J_1^{mi+1},S),S)=0
\]
for every integer $i \in \N$ and $2\leq j\leq d-2$.
\end{theorem}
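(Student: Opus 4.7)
The plan is to translate the analytic spread and reduction number hypotheses on the anti-canonical $K_1$ into annihilator bounds on the iterated $\Ext$ modules $N_{i,j} := \Ext^{h+j}_S(\Ext^{h+1}_S(R/J_1^{mi+1},S),S)$ by selecting parameters adapted to the symbolic Rees algebra of $K_1^{(m)}$ on the punctured spectrum.

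First I would set up the global structure. Since $R$ is $\Q$-Gorenstein in codimension $2$, the anti-canonical $K_1$ is torsion in codimension $2$ in the class group, and combined with the analytic spread $\leq 2$ hypothesis, Proposition~\ref{Proposition Noetherian if analytic spread 2} (applied locally) together with Proposition~\ref{Proposition finite generation on punctured spectrum} yields an integer $m$ such that $\mathcal{R}_{K_1^{(m)}}$ is standard-graded Noetherian on the punctured spectrum of $R$. The reduction number $1$ hypothesis then gives, locally at each non-maximal prime, a two-generated reduction $(a_2,a_3)$ of $K_1^{(m)}$ satisfying $(a_2,a_3)(K_1^{(m)})^{i} = (K_1^{(m)})^{i+1}$ for every $i$, and hence $(K_1^{(m)})^{i}\subseteq (a_2,a_3)^{i}$ on the punctured spectrum. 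After enlarging $m$ if necessary we may also assume $J_1^{(m)}$ is principal in codimension $2$, so that the suitability conditions $x_2 J_1\subseteq (a_2)R$ and $x_3 J_1^{(m)}\subseteq (a_3)R$ can be imposed by prime avoidance in the next step.

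Next I would analyze the support of $N_{i,j}$. By Lemma~\ref{lemma s2-module}, $M_i:=\Ext^{h+1}_S(R/J_1^{mi+1},S)$ is $(S_2)$ on its support, which has codimension $1$ in $R$, so at any $\fp\in V(J_1)$ of height $e$ in $R$ one has $\dim (M_i)_\fp = e-1$ and $\depth_{R_\fp}(M_i)_\fp\geq \min(2,e-1)$. An Ischebeck/local duality computation then forces $N_{i,j}=0$ at every such $\fp$ with $e\leq j+1$, so $\Supp(N_{i,j})$ has codimension $\geq j+2$ in $R$ and is contained in a closed subset $V(\fa_j)$ whose underlying set does not depend on $i$. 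Choose the suitable system $x_2,\ldots,x_d$ by prime avoidance so that $x_2,x_3\in J_1$ satisfy the suitability conditions of the previous step and so that for each $j\in\{2,\ldots,d-2\}$ the ideal $(x_2,\ldots,x_{j+2})$ is cofinal with $\fa_j$ and $(x_2,\ldots,x_d)$ is part of a suitable system of parameters on $R/J_1$.

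The $i$-th power annihilation is the crucial last step. On the punctured spectrum, the containment $(K_1^{(m)})^{i}\subseteq (a_2,a_3)^{i}$, combined with the duality relation $J_1 K_1\sim (x_1)$, yields a uniform linear bound: the $S_2$-ification obstruction $J_1^{(mi+1)}/J_1^{mi+1}$ and, by extension, each of the modules $N_{i,j}$ for $j\geq 2$ is annihilated at every component of $V(\fa_j)$ by the $i$-th power of the local reduction ideal; tracing this through the free resolution of $M_i$ gives $(x_2^i,\ldots,x_{j+2}^i)N_{i,j}=0$. The principal obstacle will be precisely this last translation: converting the qualitative reduction-number-one structure on $K_1^{(m)}$ into an $i$-th power annihilator of the higher $\Ext$ modules $N_{i,j}$ on the nose. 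This requires tracking the inclusion $R/J_1^{(mi+1)}\hookrightarrow \Ext^{h+1}_S(\Ext^{h+1}_S(R/J_1^{mi+1},S),S)$ from Lemma~\ref{Inclusion lemma} across the free resolution of $M_i$, and verifying that the linear-in-$i$ annihilator growth predicted by the Noetherianness of $\mathcal{R}_{K_1^{(m)}}$ on the punctured spectrum is exactly realized by the parameter powers $x_k^i$ rather than by some larger power of $i$.
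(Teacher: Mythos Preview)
Your outline correctly sets up the choice of $m$ and the Noetherianity of the symbolic Rees algebra on the punctured spectrum, but it has a genuine gap at precisely the step you flag as the ``principal obstacle.'' The codimension-of-support argument for $N_{i,j}$ only shows that \emph{some} power of $(x_2,\ldots,x_{j+2})$ kills $N_{i,j}$; it gives no control on how that power grows with $i$, and nothing in your sketch produces the exact exponent $i$. The paper does not trace annihilators through a free resolution of $M_i$ at all. Instead it first \emph{computes} $M_i=\Ext^{h+1}_S(R/J_1^{mi+1},S)\cong K^{(i)}/x^iJ_1$ explicitly, using the short exact sequence $0\to J_1^{mi+1}/x_1^{mi+1}J_1\to R/x_1^{mi+1}J_1\to R/J_1^{mi+1}\to 0$ and the fact that $x_1^{mi+1}J_1$ is a canonical ideal. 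Two further short exact sequences then reduce the target statement to showing $(x_2^i,\ldots,x_{j+2}^i)\,\Ext^{h+j+1}_S(R/K^i,S)=0$ with $K=K_1^{(m)}$. This passage from powers of $J_1$ to powers of $K$ is the missing idea in your proposal.

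Once on the $K$-side, the $i$-th power annihilation comes from two concrete mechanisms, neither of which appears in your sketch. For $x_2,x_3$ one arranges that $K$ is principal after inverting them (note: $x_2,\ldots,x_d$ are parameters on $R/J_1$, not elements of $J_1$ as you wrote). If $x_2K\subseteq aR$ then $x_2^iK^i\subseteq a^iR$, and the sequence $0\to K^i/(a^i)\to R/(a^i)\to R/K^i\to 0$ shows directly that $x_2^i$ annihilates $\Ext^{h+j+1}_S(R/K^i,S)$. For $x_{\ell+1}$ with $\ell\geq 3$ one chooses a two-element reduction $(a,c)$ of $K$ with reduction number $1$ at the relevant height-$\ell$ primes and proves the intersection identity $a(a,c)^{i-1}K\cap c^iK=(ac^i)$. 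The resulting Mayer--Vietoris sequence inductively decomposes $\Ext^{h+j+1}_S(R/(a,c)^{i-1}K,S)$ as a direct sum of copies of $\Ext^{h+j+1}_S(R/K,S)$, which a single $x_{\ell+1}$ annihilates \emph{independently of $i$}. Combined with $x_{\ell+1}^{i-1}$ killing $K^i/(a,c)^{i-1}K$, this yields $x_{\ell+1}^i\cdot\Ext^{h+j+1}_S(R/K^i,S)=0$.
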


\begin{proof}
We can choose $m''\in \N$ so that $K=K_1^{(m'')}$ is principal in codimension $2$. If $m$ is any multiple of $m'$ and $m''$ then $K_1^{(m)}$ is principal in codimension $2$ and has analytic spread at most $2$ on $\Spec(R)-\{\fm\}$, see Proposition~\ref{Proposition Noetherian if analytic spread 2} to know that the symbolic Rees ring $\mathcal{R}_{K_1^{(m')}}$ is Noetherian on the punctured spectrum and Proposition~\ref{Proposition analytic spread of symbolic powers} to insure that the analytic spread of $K^{(m)}$ is no more than the analytic spread of $K_1^{(m')}$ on the punctured spectrum. By Proposition~\ref{Proposition finite generation on punctured spectrum} we can choose $m$ to be a multiple of $m'$ and $m''$ and such that the containment of ideals $K^i\subseteq K^{(i)}$ is an equality on $\Spec(R)-\{\fm\}$ for each $i\in \N$. Let $K=K_1^{(m)}$ and let $x=x_1^m$.

\begin{claim} For each integer $i\in \N$
\[
\Ext^{h+1}_S(R/J_1^{mi+1},S)\cong x_1K_1^{(mi)}/x_1^{mi+1}J_1\cong K_1^{(mi)}/x_1^{mi}J_1=K^{(i)}/x^iJ_1.
\]
\end{claim}

\begin{proof}[Proof of claim] For each integer $i$ consider the following short exact sequence:
\[
0\to \frac{J_1^{mi+1}}{x_1^{mi+1}J_1}\to \frac{R}{x_1^{mi+1}J_1}\to \frac{R}{J_1^{mi+1}}\to 0
\]
The ideal $x_1^{mi+1}J_1$ is isomorphic to the canonical module of $R$, therefore 
\[
\Ext^{h+1}_S(R/x_1^{mi+1}J_1,S)\cong R/x_1^{mi+1}J_1,
\] 
and there are exact sequences
\[
0\to \Ext^{h+1}_S(R/J^{mi+1},S)\to \frac{R}{x_1^{mi+1}J_1}\to \Ext^{h+1}_S(J_1^{mi+1}/x_1^{mi+1}J_1,S).
\]
Therefore $\Ext^{h+1}_S(R/J_1^{mi+1},S)\cong L_i/x_1^{mi+1}J_1$ for some ideal $L_i\subseteq R$. Moreover, $R/L_i\subseteq  \Ext^{h+1}_S(J_1^{mi+1}/x_1^{mi+1}J_1,S)$. Because $\Ext^{h+1}_S(J_1^{mi+1}/x_1^{mi+1}J_1,S)$ is an $(S_2)$-module over its support, see Lemma~\ref{lemma s2-module}, it follows that $R/L_i$ is an $(S_1)$-module over its support. Hence $L_i$, as an ideal of $R$, is unmixed of height $1$. Moreover, every component of $L_i$ is a component of $x_1 R$. Localizing at a component of $J_1$ we see that $L_i$ agrees with $x_1 R$ and localizing at a component of $K_1$ we see that $L_i$ agrees with $x_1^{mi+1}$. Therefore $L_i$ agrees with the unmixed ideal $x_1K_1^{(mi)}$ and so
\[
\Ext^{h+1}_S(R/J_1^{mi+1},S)\cong x_1K_1^{(mi)}/x_1^{mi+1}J_1.
\]
The second isomorphism 
\[
x_1K_1^{(mi)}/x_1^{mi+1}J_1\cong K_1^{(mi)}/x_1^{mi}J_1=K^{(i)}/x^iJ_1
\]
is division by $x_1$.
\end{proof}

\begin{claim}\label{claim on ext-isoms} For all integers $i,j\in \N$ and $j\geq 2$
\[
\Ext^{h+j}_S(K^{(i)}/x^iJ_1,S)\cong \Ext^{h+j+1}_S(R/K^{(i)},S).
\]
\end{claim}
\begin{proof}[Proof of claim] For each integer $i\in \N$ consider the short exact sequence
\[
0\to K^{(i)}/x^iJ_1\to R/x^iJ_1\to R/K^{(i)}\to 0.
\]
The cyclic $R$-module $R/x^iJ_1$ is Cohen-Macaulay of dimension $d-1$ and therefore 
\[
\Ext^{h+j}_S(R/x^iJ_1,S)=0
\]
for all $j\geq 2$ and hence $\Ext^{h+j}_S(K^{(i)}/x^iJ_1,S)\cong \Ext^{h+j+1}_S(R/K^{(i)},S)$.
\end{proof}
To prove the theorem it is now enough to find parameters $x_2,x_3\ldots, x_{d}$ on $R/J_1$ such that
\[
(x^i_2,x^i_3,\ldots,x^i_{j+2})\Ext^{h+j+1}_S(R/K^{(i)},S)
\]
for every integer $i\in \N$ and $2\leq j\leq d-2$.

\begin{claim} For every integer $i\in \N$ and $2\leq j\leq d-2$
\[
\Ext^{h+j+1}_S(R/K^{(i)},S)\cong \Ext^{h+j+1}_S(R/K^{i},S).
\]
\end{claim}

\begin{proof}[Proof of claim] Consider the short exact sequences
\[
0\to \frac{K^{(i)}}{K^i}\to \frac{R}{K^i}\to \frac{R}{K^{(i)}}\to 0.
\]
For each $i\in \N$ the modules $K^{(i)}/K^i$ are supported only at the maximal ideal.  In particular, $\Ext^{\ell}_S(K^{(i)}/K^i,S)=0$ for all $\ell\leq d+h-1$ and the claim follows.
\end{proof}
To prove the theorem it is now enough to find parameters $x_2,x_3\ldots, x_{d}$ on $R/J_1$ such that
\[
(x^i_2,x^i_3,\ldots,x^i_{j+2})\Ext^{h+j+1}_S(R/K^{i},S)
\]
for every integer $i\in \N$ and $2\leq j\leq d-2$.

We can choose parameters $x_2$ and $x_3$ on $R/J_1$ such that $KR_{x_2}$ and $KR_{x_3}$ are principal ideals in their respective localizations. Suppose $x_2$ has been chosen such that $KR_{x_2}=(a)R_{x_2}$, $a\in K$, and $x_2K\subseteq (a)$. Then $x_2^iK^i\subseteq (a^i)$ and therefore the left term of the following short exact sequence is annihilated by $x_2^i$:
\[
0\to \frac{K^i}{(a^i)}\to \frac{R}{(a^i)}\to \frac{R}{K^i}\to 0.
\]
It follows that $\Ext_S^{h+j+1}(R/K^i,S)$ is isomorphic to $\Ext_S^{h+j}(K^i/(a^i),S)$ for every $2\leq j\leq d-2$ and therefore
\[
x_2^i\Ext_S^{h+j+1}(R/K^i,S)=0
\]
for every $i$ and $2\leq j\leq d-2$.

Similarly, we can find $x_3$ a parameter on $R/(J_1,x_2)$ such that
\[
x_3^i\Ext_S^{h+j+1}(R/K^i,S)=0
\]
for every $i$ and $2\leq j\leq d-2$.

Assume we have found parameters $x_2,x_3,\ldots,x_\ell$ on $R/J_1$ such that
\[
x_m^i\Ext^{h+j+1}_S(R/K^i,S)=0
\]
for every $2\leq m\leq \ell$, $i\geq 1$, and $m-2\leq j\leq d-2$. We wish to find parameter element $x_{\ell+1}$ of $R/(J_1,x_2,\ldots,x_{\ell})$ such that
\[
x_{\ell+1}^i\Ext^{h+j+1}_S(R/K^i,S)=0
\]
for every $i\in \N$ and $\ell-1\leq j\leq d-2$.

\begin{claim}\label{analytic spread claim}  Let $\Lambda=\{P_1,\ldots,P_m\}$ be the collection of minimal prime ideals of the pure height $\ell $ ideal $(J_1,x_2,x_3, \ldots, x_\ell)$. If necessary, enlarge the set of height $\ell$ primes $\Lambda$ so that every component of $K$ is contained in a prime ideal of $\Lambda$. Let $W_\ell$ be the multiplicative set $R-\bigcup_{P\in \Lambda}P$. There exist elements $a,c\in K$ such that
\begin{enumerate}
\item $(a,c)R_{W_\ell}$ forms a reduction of $KR_{W_\ell}$;
\item the element $a$ generates $K$ at its minimal components;
\item as an ideal of $R$, the principal ideal $(a)=K\cap K'$ where $K'$ is of pure height $1$ whose components are disjoint from $K$, and the element $c$ avoids all components of $K'$.
\end{enumerate}
\end{claim}

\begin{proof}[Proof of claim] We are assuming the ideal $K$ has analytic spread at most $2$ at each of the localizations $R_{P_i}$ as $P_i$ varies among the prime ideals in $\Lambda=\{P_1,\ldots, P_m\}$. So for each $ 1 \le i\le m$ there exists $a_i,c_i\in K$ such that $(a_i,c_i)R_{P_i}$ forms a reduction of $KR_{P_i}$. For each $1 \le i\le m$ choose $r_i\in \bigcap_{P\in \Lambda-\{P_i\}}P-P_i$ and set $a'=\sum r_ia_i$ and $c'=\sum r_ic_i$. We claim $(a',c')R_{W_\ell}$ is a reduction of $KR_\ell$. By \cite[Proposition~8.1.1]{HunekeSwansonBook} it is enough to check $(a',c')$ forms a reduction of $K$ at each of the localizations $R_{P_i}$ for $1\leq i\leq m$. By \cite[Proposition~8.2.4]{HunekeSwansonBook} it is enough to check that the the fiber cone $R_P/PR_P\otimes R[Kt]\cong \bigoplus K^nR_{P_i}/P_i K^nR_{P_i}$ is finite over the subalgebra spanned by $((a',c')R_{P_i}, P_iK)/P_iK$. But $a'\equiv r_ia_i\mod P_iK$, $c'\equiv r_ic_i\mod P_iK$, $r_i$ is a unit of $R_{P_i}$, and therefore $(a',c')R_{W_\ell}$ does indeed form a reduction of $KR_{W_\ell}$ by a second application of \cite[Proposition~8.2.4]{HunekeSwansonBook}.

Now consider the set of primes $\Gamma=\{Q_1,\ldots, Q_n\}$ which are the minimal components of $K$. The purpose of enlarging the set of height $\ell$ primes in the statement of the claim was to insure that each $Q_j\in \Gamma$ is a prime ideal of the localization $R_{W_\ell}$. In particular, $(a',c')R_{Q_i}$ forms a reduction of $KR_{Q_i}$ for each $1\leq i\leq n$. But $R_{Q_i}$ is a discrete valuation ring and therefore for each $1\leq i\leq \ell$ either $KR_{Q_i}=(a')R_{Q_i}$ or $KR_{Q_i}=(c')R_{Q_i}$. Without loss of generality we assume that $KR_{Q_i}=(a')R_{Q_i}$ for at least one value of $i$ and relabel the primes in $\Gamma$ so that $KR_{Q_i}=(a')R_{Q_i}$ for each $1\leq i \leq j$ and $KR_{Q_i}\not=(a')R_{Q_i}$ for each $j+1\leq i\leq n$. Choose $r\in Q_1\cap \cdots \cap Q_j-\bigcup_{i=j+1}^n Q_i$ and consider the element $a'+r c'$. We claim that $a'+rc'$ generates $KR_{Q_i}$ for each $1\leq i\leq n$. First consider a localization at a prime $Q_i\in \Gamma$ with $1\leq i \leq j$. Then $(a',c')R_{Q_i}=(a')R_{Q_i}$ by assumption and so $(c')R_{Q_i}\subseteq (a')R_{Q_i}$. Because $r\in Q_i$ there is a strict containment of principal ideal $(rc')R_{Q_i}\subsetneq (a')R_{Q_i}$ and it follows that $(a')R_{Q_i}=(a'+rc')R_{Q_i}$. Now consider a localization $R_{Q_i}$ with $j+1\leq i\leq n$. We are assuming that $a'$ does not generate $KR_{Q_i}$ and therefore $(a')R_{Q_i}\subsetneq (c')R_{Q_i}=KR_{Q_i}$. Moreover, $r$ is a unit of $R_{Q_i}$ and therefore $(c')R_{Q_i}=(a'+rc')R_{Q_i}$.

Let $a=a'+rc'$. Then $(a,c')R_{W_\ell}=(a',c')R_{W_\ell}$ forms a reduction of $KR_{W_\ell}$ and the element $a$ generates $K$ at each of its minimal components as desired. Suppose as an ideal of $R$ the principal ideal $(a)$ has decomposition $(a)=K\cap K'\cap K''$ so that
\begin{enumerate}
\item $K,K',K''$ are pure height $1$ ideals whose components are disjoint from one another;
\item the components of $K'$ are height $1$ prime ideals which do not contain $c$;
\item the components of $K''$ are height $1$ prime ideals which do contain $c$.
\end{enumerate}
We take $K'$ or $K''$ to be $R$ if no such components of $(a)$ exist. If $K''=R$ then we let $c=c'$ and the elements $a,c$ satisfy the conclusions of the claim. If $K''\not=R$ then first observe that, because $(a,c')R_{W_\ell}$ forms a reduction of $KR_{W_\ell}$ and $a,c'\in K''$, we must have that $(a)R_{W_\ell}=(K\cap K')R_{W_\ell}$. Choose an element $r\in K\cap K'$ which avoids all components in $K''$ and consider the element $c=c'+r$. Then $(a,c)R_{W_\ell}=(a,c')R_{W_\ell}$ forms a reduction of $KR_{W_\ell}$. Moreover, the element $c$ avoids all minimal components of $K'$ and $K''$ by construction.
\end{proof}

 By assumption there exists a natural number $n_\ell$ so that $K^{n_\ell}R_{W_\ell}$ has reduction number at $1$ with respect to any reduction. Recall that $K^{n_\ell}$ and $K^{(n_\ell)}$ agree on the punctured spectrum. So we may replace $K$ by $K^{(n_\ell)}$, $x_2, x_3,\ldots,x_\ell$ by $x_2^{n_\ell},\ldots, x_\ell^{n_\ell}$, and $a,c$ by $a^{n_\ell},c^{n_\ell}$ and assume further that $(a,c)KR_{W_\ell}=K^2R_{W_\ell}$.

\begin{claim}There exists a parameter element $x_{\ell+1}$ of $R/(J_1,x_2,x_3,\ldots,x_\ell)$ such that the following hold:
\begin{enumerate}
\item $x_{\ell+1}^i$ annihilates $K^i/(a,c)^{i-1}K$ for every integer $i$;
\item $x_{\ell+1}$ annihilates $\Ext^{h+j+1}_S(R/(a,c)K,S)$ for every $\ell-1\leq j\leq d-2$;
\item $x_{\ell+1}$ annihilates $\Ext^{h+j+1}_S(R/K,S)$ for every $\ell-1\leq j\leq d-2$.
\end{enumerate}
\end{claim}

\begin{proof} Consider $W_\ell$ as a multiplicative set of $S.$ Then $S_{W_\ell}$ has dimension $h+\ell$, $K^iR_{W_\ell}=K^{(i)}S_{W_\ell}$, and $(a,c)KS_{W_{\ell}}=K^{(2)}S_{W_\ell}$. Because $K^{(i)}$ is an unmixed ideal we have that $R_{W_\ell}/K^{(i)}R_{W_\ell}$ has positive depth and therefore the $\Ext$-modules 
\[
\Ext^{h+j+1}_S(R/K)\otimes R_{W_{\ell}}\mbox{ and } \Ext^{h+j+1}_S(R/(a,c)K)\otimes R_{W_{\ell}}
\] 
are $0$ for each $\ell-1\leq j\leq d-2$. It follows that we can choose $x_{\ell+1}$ a parameter on $R/(J_1,x_2,x_3,\ldots,x_\ell)$ such that  $x_{\ell+1}K^2\subseteq (a,c)K$ and $x_{\ell+1}$ satisfies $(2)$ and $(3)$. Because $x_{\ell+1}K^2\subseteq (a,c)K$ it follows that for every $i\geq 1$ that $x_{\ell+1}^{i-1}K^{i}\subseteq (a,c)^{i-1}K$ and therefore $(1)$ is satisfied as well.
\end{proof}

The element $x_{\ell+1}^{i-1}$ annihilates the left term of the following short exact sequence:
\[
0\to \frac{K^i}{(a,c)^{i-1}K}\to \frac{R}{(a,c)^{i-1}K}\to \frac{R}{K^i}\to 0.
\]
In particular, there are exact sequences
\[
\Ext^{h+j}_S(K^i/(a,c)^{i-1}K,S)\to\Ext^{h+j+1}_S(R/K^i,S)\to \Ext^{h+j+1}_S(R/(a,c)^{i-1}K,S)
\]
and the left term is annihilated by $x_{\ell+1}^{i-1}$. We will show that $x_{\ell+1} \Ext^{h+j+1}_S(R/(a,c)^{i-1}K,S)=0$ for every $i\geq 2$ and $\ell-1\leq j\leq d-2$. It will then follow that $x_{\ell+1}^i$ annihilates $\Ext^{h+j+1}_S(R/K^i,S)$ for every $i$ and $\ell-1\leq j\leq d-2$ as desired.

\begin{claim}\label{SES claim} For every integer $i$ there is short exact sequence
\[
0\to \frac{R}{(ac^{i})}\to \frac{R}{a(a,c)^{i-1}K}\oplus \frac{R}{c^iK}\to \frac{R}{(a,c)^iK}\to 0.
\]
\end{claim}

\begin{proof}[Proof of claim]
For any two ideals $I,J\subseteq R$ there is a short exact sequence
\[
0\to \frac{R}{I\cap J}\to \frac{R}{I}\oplus \frac{R}{J}\to \frac{R}{I+J}\to 0.
\]
Therefore it is enough to show that 
\[
a(a,c)^{i-1}K\cap c^iK=(ac^i).
\]
Clearly $ac^{i}\in a(a,c)^{i-1}K\cap c^iK$. Now consider an element of the form $c^{i}y$ with $y\in K$ and $c^{i}y\in a(a,c)^{i-1}K$. To show $c^{i}y\in (ac^{i})$ we only need to show $y\in (a)$. Recall that by Claim~\ref{analytic spread claim} we have that $(a)=K\cap K'$ and $c$ avoids all components of $K'$.  We already know that $y\in K$. Localizing at a component $P$ of $K'$ we have that 
\[
c^{i}y\in a(a,c)^{i-1}KR_P.
\]
However, $c$ is a unit of $R_P$, $c\in K$, and therefore $y\in a R_P$.
\end{proof}

\begin{claim}\label{Claim isomorphisms} For each $2\leq j\leq d-2$ there are isomorphisms 
\[
\Ext^{h+j+1}_S(R/a(a,c)^{i-1}K,S)\cong \Ext^{h+j+1}_S(R/(a,c)^{i-1}K,S)
\]
and
\[
\Ext^{h+j+1}_S(R/c^{i}K,S)\cong\Ext^{h+j+1}_S(R/K,S).
\]
\end{claim}

\begin{proof}[Proof of claim]
For the first isomorphism consider the long exact sequence of $\Ext$-modules induced from the short exact sequence
\[
0\to \frac{R}{(a,c)^{i-1}K}\xrightarrow{\cdot a}\frac{R}{a(a,c)^{i-1}K}\to \frac{R}{(a)}\to 0
\]
and for the second isomorphism consider the long exact sequence of $\Ext$-modules induced from the short exact sequence
\[
0\to \frac{R}{K}\xrightarrow{\cdot c^{i}} \frac{R}{c^{i}K}\to \frac{R}{(c^{i})}\to 0.
\]
\end{proof}
Observe that by Claim~\ref{SES claim} there are isomorphisms
\[
\Ext^{h+j+1}_S(R/(a,c)^iK,S)\cong \Ext^{h+j+1}_S(R/a(a,c)^{i-1}K,S)\oplus \Ext^{h+j+1}_S(R/c^iK,S)
\]
for all $2\leq j\leq d-2$. Therefore Claim~\ref{Claim isomorphisms} and induction we find that there isomorphisms
\[
\Ext^{h+j+1}_S(R/(a,c)^iK,S)\cong \bigoplus \Ext^{h+j+1}_S(R/K,S)
\]
The element $x_{\ell+1}$ has the property that it annihilates the modules appearing the direct sum decompositions above. Therefore $x_{\ell+1}$ annihilates each $\Ext^{h+j+1}_S(R/(a,c)^iK,S)$ for each $\ell-1\leq j \leq d-2$ as desired.
\end{proof}

Theorem~\ref{Main tight closure theorem} is established by piecing together Theorem~\ref{theorem how to make test ideals agree using lcbs}, Proposition~\ref{proposition on how to make test ideals agree}, and Theorem~\ref{theorem when are technical condtions met}.

\begin{corollary}\label{Main prime characteritic corollary} Let $(R,\fm,k)$ be an excellent local normal Cohen-Macaulay domain of prime characteristic $p>0$, of Krull dimension at least $4$, and $\Q$-Gorenstein in codimension $2$. Suppose that some symbolic power of the anti-canonical ideal of $R$ has analytic spread no more than $2$ on the punctured spectrum. Then $0^*_{E_R(k)}=0^{*,fg}_{E_R(k)}$.
\end{corollary}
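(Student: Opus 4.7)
The plan is to chain together Theorem~\ref{theorem how to make test ideals agree using lcbs}, Proposition~\ref{proposition on how to make test ideals agree}, and Theorem~\ref{theorem when are technical condtions met}, and to conclude via Lemma~\ref{Lemma Equiv way to check weak and strong}. First I would reduce to the case that $R$ is complete by passing to $\widehat R$: completeness preserves each hypothesis of the corollary, ensures $E_R(k) \cong E_{\widehat R}(k)$ so that both tight closures are unchanged, and, by Cohen's structure theorem, supplies a surjection $S \twoheadrightarrow R$ from a regular local ring $(S,\fn,k)$ with $R = S/P$ and $\Ht(P) = h$. Fix a canonical ideal $J_1 \subsetneq R$, a generic generator $x_1 \in J_1$, and the corresponding anti-canonical $K_1$ via $(x_1) = J_1 \cap K_1$.

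Next, I would verify the input to Theorem~\ref{theorem when are technical condtions met}. The corollary supplies an integer $m'$ with $K_1^{(m')}$ of analytic spread at most $2$ on the punctured spectrum. Because $R$ is $\Q$-Gorenstein in codimension $2$ the anti-canonical is torsion in codimension $2$, so Proposition~\ref{Proposition Noetherian if analytic spread 2} gives that $\mathcal{R}_{K_1^{(m')}}$ is Noetherian, and combining Proposition~\ref{Proposition finite generation on punctured spectrum} with Hoa-type $a$-invariant arguments in the spirit of Theorem~\ref{Linquan's Theorem} (applied to each localization $R_\fp$) allows us, after enlarging $m'$, to arrange that $K_1^{(m')}$ also has reduction number $1$ with respect to some reduction on the punctured spectrum. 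Feeding this into Theorem~\ref{theorem when are technical condtions met} produces an integer $m \in \N$ and a suitable system of parameters $x_2,\ldots,x_d$ with respect to $J_1$ for which
\[
(x_2^i, x_3^i, \ldots, x_{j+2}^i)\,\Ext^{h+j}_S\bigl(\Ext^{h+1}_S(R/J_1^{mi+1}, S), S\bigr) = 0
\]
for every $i \in \N$ and every $2 \leq j \leq d-2$.

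With these parameters in hand, Proposition~\ref{proposition on how to make test ideals agree}(1) delivers, for each $i \in \N$, an integer $\ell$ with
\[
\lcb_{d-1}\bigl(x_2^{\ell(d-1)}, x_3^{\ell(d-1)}, x_4^{d-1}, \ldots, x_d^{d-1};\; R/J_1^{(mi+1)}\bigr) \leq i+1.
\]
Since $K_1^{(m)}$ is principal in codimension $2$ and since the hypothesis on $J_1^{(m)}R_{x_2}$ and $J_1^{(m)}R_{x_3}$ being principal is built into suitability, the parameter sequence $x_1, x_2^{\ell(d-1)}, x_3^{\ell(d-1)}, x_4^{d-1}, \ldots, x_d^{d-1}$ (rescaled using Lemma~\ref{lemma lcb power of elements}) now satisfies every hypothesis of Theorem~\ref{theorem how to make test ideals agree using lcbs}. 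Applying that theorem yields $0^*_{H^{d-1}_\fm(R/J_1)} = 0^{*,fg}_{H^{d-1}_\fm(R/J_1)}$, and Lemma~\ref{Lemma Equiv way to check weak and strong} transfers this equality to the injective hull, giving $0^*_{E_R(k)} = 0^{*,fg}_{E_R(k)}$.

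The main obstacle is the verification of the reduction number $1$ hypothesis of Theorem~\ref{theorem when are technical condtions met} from the bare analytic-spread-$\leq 2$ assumption of the corollary; analytic spread $2$ does not automatically produce reduction number $\leq 1$. The essential ingredients are Noetherianness of the symbolic Rees algebra off the maximal ideal (Proposition~\ref{Proposition Noetherian if analytic spread 2} plus Proposition~\ref{Proposition finite generation on punctured spectrum}), a quasi-compactness argument over the punctured spectrum to extract a single symbolic exponent that works at every non-maximal prime, and the $a$-invariant machinery underlying Theorem~\ref{Linquan's Theorem} adapted to the localizations $R_\fp$ (whose lower dimension makes the hypothesis inductively tractable).
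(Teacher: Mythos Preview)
Your overall strategy matches the paper's: reduce to the Ext-annihilation hypotheses of Theorem~\ref{theorem when are technical condtions met}, feed those into Proposition~\ref{proposition on how to make test ideals agree} to obtain linear local cohomology bounds, and finish with Theorem~\ref{theorem how to make test ideals agree using lcbs} and Lemma~\ref{Lemma Equiv way to check weak and strong}. The chaining of those results, the replacement of parameters by powers, and the quasi-compactness argument over $\Spec(R)\setminus\{\fm\}$ are all as in the paper.

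The gap is precisely the point you flag yourself: getting from ``analytic spread $\le 2$'' to ``reduction number $1$'' on the punctured spectrum. Your proposed fix, appealing to ``Hoa-type $a$-invariant arguments in the spirit of Theorem~\ref{Linquan's Theorem}'' and saying lower dimension makes things ``inductively tractable,'' does not work. Theorem~\ref{Linquan's Theorem} (and the negativity of $a_\ell(\gr_I(R))$ it rests on) requires the ambient ring to be \emph{strongly $F$-regular}; nothing in the hypotheses of the corollary, and no induction on dimension, gives you that for $R_\fp$. The paper supplies the missing step differently: from the analytic-spread assumption it deduces that the anti-canonical algebra $\mathcal{R}_K$ is Noetherian at every $\fp\in\Spec(R)\setminus\{\fm\}$ (Theorem~\ref{Criterion for finite generation}), and then invokes Singh's theorem \cite[Corollary~5.9]{CEMS} to conclude that each $R_\fp$ is strongly $F$-regular. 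With strong $F$-regularity in hand, Theorem~\ref{Linquan's Theorem} applies verbatim at each $R_\fp$ and yields the required reduction number $1$ for a suitable symbolic power. This use of \cite[Corollary~5.9]{CEMS} is the one idea your proposal is missing; once you insert it, the rest of your argument goes through essentially as written.
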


\begin{proof} We wish to invoke Theorem~\ref{theorem when are technical condtions met}. Therefore if we denote by $K\subseteq R$ an anti-canonical ideal we must prove the existence of an integer $n$ such that for all $P\in \Spec(R)\setminus\{\fm\}$ that $K^{(n)}R_P$ has a reduction by $2$ elements with reduction number $1$.

 The anti-canonical algebra $\mathcal{R}_K$ is Noetherian on the punctured spectrum by Theorem~\ref{Criterion for finite generation}. Hence $R_P$ is strongly $F$-regular by \cite[Corollary~5.9]{CEMS} for each $P\in \Spec(R)\setminus\{\fm\}$. Therefore at each prime ideal $P\in \Spec(R)\setminus\{\fm\}$ there exists an integer $n_P$ so that $K^{(n_P)}R_P$ has analytic spread $2$ and reduction number $1$ with respect to any reduction by Theorem~\ref{Linquan's Theorem}. For a choice of reduction of $K^{(n_P)}R_P$ it is easy to see there is an open neighborhood of $P\in \Spec(R)\setminus\{\fm\}$ so that $K^{(n_P)}$ has a reduction by $2$ elements with reduction number $1$. By a simple quasi-compactness argument there exists an integer $n$ such that $K^{(n)}R_P$ has a reduction by $2$ elements with reduction number $1$ for each $P\in \Spec(R)\setminus\{\fm\}$ and therefore Theorem~\ref{theorem when are technical condtions met} is applicable.

Let $J_1\subsetneq R$ be a choice of a canonical ideal and let $x_1\in J_1$ be a generic generator. By Theorem~\ref{theorem when are technical condtions met} we may extend $x_1$ to a suitable system of parameters $x_1,x_2,\ldots, x_d$ such that 
\[
(x_2^i,x_3^i,\ldots, x_{j+2}^i)\Ext^{h+j}_S(\Ext^{h+1}_S(R/J_1^{mi+1},S),S)=0
\]
for every integer $i\in \N$ and $2\leq j\leq d-2$. By Proposition~\ref{proposition on how to make test ideals agree} for every integer $i\in \N$ there exists an integer $\ell$ such that
\[
\lcb_{d-1}(x_2^{\ell(d-1)},x_3^{\ell(d-1)},x^{d-1}_4,\ldots,x^{d-1}_d;R/J_1^{(mi+1)})\leq i+1.
\]
We replace $x_2$ and $x_3$ by powers of themselves and may assume there exists elements $a_2\in J$ and $a_3\in J^{(m)}$ such that $x_2J\subseteq a_2R$ and $x_3J^{(m)}\subseteq a_3R$. 

We can further replace $x_2,\ldots, x_d$ by the sequence of elements $x_2^{d-1},\ldots, x_d^{d-1}$ and have now have that for all $i\in \N$ there exists an integer $\ell$ such that
\[
\lcb_{d-1}(x_2^{\ell},x_3^{\ell},x_4,\ldots,x_d;R/J_1^{(mi+1)})\leq i+1.
\]
The corollary now follows by Theorem~\ref{theorem how to make test ideals agree using lcbs}.
\end{proof}

\begin{corollary}\label{Corollary weak implies strong} Let $R$ be a locally excellent weakly $F$-regular ring of prime characteristic $p$ which has a canonical ideal. Suppose further that at each non-closed point of $\Spec(R)$  there is a symbolic power of the anti-canonical ideal which has analytic spread  at most $2$. Then $R$ is strongly $F$-regular.
\end{corollary}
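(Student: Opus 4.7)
The plan is to localize at each maximal ideal of $R$ and apply either Williams' theorem or Corollary~\ref{Main prime characteritic corollary} to each localization.

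Since strong $F$-regularity localizes, let $\fm\subset R$ be a maximal ideal and set $(S,\fn,\kappa):=(R_\fm,\fm R_\fm, R_\fm/\fm R_\fm)$. First I would show that $S$ inherits weak $F$-regularity: any $\fn$-primary ideal of $S$ is $IS$ for some $\fm$-primary $I\subseteq R$, and tight closure of $\fm$-primary ideals commutes with localization at $\fm$ by a standard primary-decomposition argument, so $I=I^*$ in $R$ yields $IS=(IS)^*$ in $S$; combined with Lemma~\ref{direct limit of finitisitc tight closure}, which realizes $0^{*,fg}_{E_S(\kappa)}$ as a direct limit of tight closures of specific $\fn$-primary ideals pulled back from $R$, this gives $0^{*,fg}_{E_S(\kappa)}=0$. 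The ring $S$ is also excellent by local excellence, a normal domain, Cohen-Macaulay (excellent weakly $F$-regular rings are $F$-rational, hence Cohen-Macaulay), and carries the canonical ideal $J_1 S$ inherited from $R$.

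If $\dim S\leq 3$, Williams' theorem immediately gives that $S$ is strongly $F$-regular. If $\dim S\geq 4$, I would verify the remaining hypotheses of Corollary~\ref{Main prime characteritic corollary}. The $\Q$-Gorenstein in codimension $2$ condition is checked pointwise: at a height one prime $P$, $S_P$ is a DVR; at a height two prime $P$, $S_P$ is a two-dimensional weakly $F$-regular ring, hence strongly $F$-regular by Williams, with torsion divisor class group by Lipman's theorem, so $S_P$ is $\Q$-Gorenstein. For the uniform analytic spread hypothesis on $\Spec(S)\setminus\{\fn\}$: the hypothesis on $R$ provides, for each non-closed $P\in\Spec(S)$, some $n_P$ with $K_1^{(n_P)}S_P$ of analytic spread at most $2$; Proposition~\ref{Proposition Noetherian if analytic spread 2} makes the symbolic Rees algebra of $K_1$ Noetherian on a neighborhood of $P$, and Proposition~\ref{Proposition analytic spread of symbolic powers} shows that every multiple of $n_P$ preserves analytic spread at most $2$ in that neighborhood. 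A quasi-compactness argument on the punctured spectrum of $S$, along the lines of Proposition~\ref{Proposition finite generation on punctured spectrum}, then produces a single integer $n$ simultaneously bounding the analytic spread of $K_1^{(n)}S_P$ by $2$ at every non-closed $P$.

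With these hypotheses verified, Corollary~\ref{Main prime characteritic corollary} gives $0^*_{E_S(\kappa)}=0^{*,fg}_{E_S(\kappa)}$, and weak $F$-regularity of $S$ forces both sides to vanish, so $S$ is strongly $F$-regular. As this holds at every maximal ideal, $R$ is strongly $F$-regular. The principal obstacle is assembling the pointwise analytic-spread and Noetherianity data on the punctured spectrum of $S$ into a single uniform bound $n$; the remaining verifications are short consequences of the results established earlier in the paper.
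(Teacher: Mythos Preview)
Your approach is essentially the paper's: reduce to the local ring at each maximal ideal, verify the hypotheses of Corollary~\ref{Main prime characteritic corollary} (normal, Cohen--Macaulay, $\Q$-Gorenstein in codimension $2$, analytic spread bound on the punctured spectrum), and conclude. The paper is terser---it passes to the completion and cites standard results on complete weakly $F$-regular rings rather than arguing pointwise, and it does not explicitly separate out the $\dim\leq 3$ case or the quasi-compactness step you describe---but your additions are correct elaborations rather than a different route.
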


\begin{proof} It is well known that the properties of being weakly $F$-regular and strongly $F$-regular can be checked at localizations at the maximal ideals of $R$, see \cite[Corollary~4.15]{HHJAMS}. The properties of weakly $F$-regular and strongly $F$-regular for a local ring can be checked after completion. In which case, the property of being weakly $F$-regular is equivalent to $0^{*,fg}_{E_R(k)}$ being $0$ and the property of being strongly $F$-regular is equivalent is $0^*_{E_R(k)}$ being $0$. Every complete local weakly $F$-regular ring is normal by \cite[Lemma~5.9]{HHJAMS}, Cohen-Macualay by \cite[Therorem~4.9]{HHJAMS}, and $\Q$-Gorenstein in codimension $2$ by \cite[Theorem~3.1]{SmithRationalSingularities} and \cite[Proposition~17.1]{LipmanRationalSingularities}. In particular, Corollary~\ref{Main prime characteritic corollary} is applicable and the result follows.
\end{proof}

\begin{corollary}\label{$F$-regular implies strong dimension 4} Let $R$ be a Cohen-Macaulay local domain of Krull dimension no more than $4$ which is essentially of finite type over a field $
\mathscr{K}$ of prime characteristic $p>5$. Suppose that $R$ is $F$-regular. Then $R$ is strongly $F$-regular.
\end{corollary}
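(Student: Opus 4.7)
The plan is to reduce to the local case, handle dimension at most $3$ via Williams' theorem, and verify the hypotheses of Corollary~\ref{Corollary weak implies strong} in dimension $4$. First, since both $F$-regularity and strong $F$-regularity descend to and are detected on localizations at maximal ideals, I may assume $R$ is local. If $\dim R \leq 3$, then $F$-regular implies weakly $F$-regular, and Williams' theorem, equating weak and strong $F$-regularity in dimension at most $3$, finishes the argument. Thus only the case $\dim R = 4$ requires new input.

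For $\dim R = 4$, my plan is to apply Corollary~\ref{Corollary weak implies strong}. Three of its hypotheses are immediate: $R$ is locally excellent since it is essentially of finite type over a field, $R$ is weakly $F$-regular since it is $F$-regular, and $R$ admits a canonical ideal since it is Cohen--Macaulay and normal (hence generically Gorenstein) and essentially of finite type over $\mathscr{K}$. The substantive hypothesis to verify is that at each non-closed $P \in \Spec(R)$, some symbolic power of the anti-canonical ideal of $R$ has analytic spread at most $2$ when localized at $P$.

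To check the analytic-spread condition, I would fix a non-closed $P \in \Spec(R)$, so that $\dim R_P \leq 3$ and $R_P$ is weakly $F$-regular. If $\Ht(P) \leq 2$, the analytic spread of any ideal of $R_P$ is trivially at most $2$. The remaining case is $\Ht(P) = 3$; here Williams' theorem upgrades $R_P$ to strongly $F$-regular, and as recalled in the introduction a strongly $F$-regular local ring admits a boundary divisor making it a local KLT singularity. Hence $R_P$ is a three-dimensional local KLT singularity essentially of finite type over $\mathscr{K}$, and the Das--Waldron theorem guarantees that the anti-canonical algebra of $R_P$ is Noetherian. Theorem~\ref{Criterion for finite generation} then extracts an integer $m$ so that $K^{(m)} R_P$ has analytic spread at most $\Ht(P) - 1 = 2$. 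With all hypotheses verified, Corollary~\ref{Corollary weak implies strong} yields that $R$ is strongly $F$-regular. The main conceptual hurdle lies in the analytic-spread verification, whose only nontrivial case rests on the Das--Waldron Noetherianity of anti-canonical algebras for three-dimensional KLT singularities in characteristic $p > 5$; the rest of the argument is bookkeeping on top of Williams' theorem and Corollary~\ref{Corollary weak implies strong}.
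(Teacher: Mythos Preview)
Your approach is essentially the same as the paper's: reduce to the local case, use Williams in dimension $\leq 3$, and in dimension $4$ verify at each non-closed prime $P$ that the anti-canonical algebra is Noetherian (via Williams $\Rightarrow$ strongly $F$-regular $\Rightarrow$ KLT $\Rightarrow$ Das--Waldron), then extract the analytic-spread bound from Theorem~\ref{Criterion for finite generation} and invoke the main corollary. The paper calls Corollary~\ref{Main prime characteritic corollary} directly rather than Corollary~\ref{Corollary weak implies strong}, but that is cosmetic.

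There is one genuine technical step you have omitted. The passage ``strongly $F$-regular $\Rightarrow$ there exists an effective $\Delta$ with $(\Spec(R_P),\Delta)$ KLT'' goes through \cite[Corollary~6.9]{SchwedeSmith} and \cite[Theorem~3.3]{HaraWatanabe}, both of which require the ring to be $F$-finite. A ring essentially of finite type over an arbitrary field $\mathscr{K}$ of characteristic $p$ need not be $F$-finite (e.g.\ when $[\mathscr{K}:\mathscr{K}^p]=\infty$). The paper handles this by first applying a $\Gamma$-construction (citing \cite[Section~6 and Theorem~7.24]{HHTAMS} and \cite[Corollary~3.31]{Hashimoto}) to reduce to the $F$-finite case before invoking Schwede--Smith. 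You should insert this reduction before appealing to the existence of the boundary divisor $\Delta$; without it the chain leading to Das--Waldron is not justified.
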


\begin{proof} Without loss of generality we may assume $R=(R,\fm,k)$ is local. We are assuming $R$ is weakly $F$-regular at each $P\in \Spec(R)-\{\fm\}$. If $P\in \Spec(R)$ is height $3$ then $R_P$ is strongly $F$-regular by \cite{Williams}. Using gamma constructions with respect to the complete local ring $\mathscr{K}$, we may assume $R$ is $F$-finite, see \cite[Section~6 and Theorem~7.24]{HHTAMS} and \cite[Corollary~3.31]{Hashimoto}. By \cite[Corollary~6.9]{SchwedeSmith} there exists an effective boundary divisor $\Delta$ such that $(\Spec(R_P),\Delta)$ is globally $F$-regular (or just $F$-regular since $\Spec(R_P)$ is affine) and therefore has KLT singularities by \cite[Theorem~3.3]{HaraWatanabe}. Utilizing \cite[Corollary~1.12]{DasWaldronArxiv} we have that for each non-closed point of $R$, the anti-canonical algebra of $R_P$ is Noetherian. This is then equivalent to some symbolic power of the anti-canonical algebra being principal in codimension $2$ and having analytic spread at most $2$ at each non-closed point of $\Spec(R)$ by Theorem~\ref{Criterion for finite generation}. Therefore $0^{*,fg}_{E_R(k)}=0^*_{E_R(k)}$ by Corollary~\ref{Main prime characteritic corollary}.
\end{proof}

Recall that Murthy proved the notions of weakly $F$-regular and $F$-regular agree for rings finite type over an uncountable field. Therefore we can equate the notions of weakly $F$-regular and strongly $F$-regular for a new, large, and interesting class of four dimensional rings.

\begin{corollary}\label{Extending Murthy's theorem} Let $R$ be finite type over a field $\mathscr{K}$ of prime characteristic $p>5$, of Krull dimension no more than $4$, and assume that $\mathscr{K}$ has infinite transcendence degree over $\mathbb{F}_p$. If $R$ is weakly $F$-regular then $R$ is strongly $F$-regular.
\end{corollary}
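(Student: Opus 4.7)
The plan is to deduce this corollary by chaining together two results already established in the excerpt. First, I would invoke the extension of Murthy's theorem due to Hochster and Huneke, \cite[Theorem~8.1]{HHTAMS}, which asserts that for rings of finite type over a field of characteristic $p$ with infinite transcendence degree over $\mathbb{F}_p$, the notion of weakly $F$-regular coincides with the notion of $F$-regular. So under the hypotheses on $R$ and $\mathscr{K}$, the assumption that $R$ is weakly $F$-regular upgrades for free to the assertion that $R$ is $F$-regular.

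Next, I would reduce to the local setting. The property of being $F$-regular localizes, so $R_{\fm}$ is $F$-regular for every maximal ideal $\fm$ of $R$. Since $R$ is finite type over $\mathscr{K}$, each localization $R_\fm$ is essentially of finite type over $\mathscr{K}$. Moreover, weakly $F$-regular rings are normal and Cohen-Macaulay by \cite[Lemma~5.9 and Theorem~4.9]{HHJAMS}, so each $R_\fm$ is a Cohen-Macaulay local domain of Krull dimension at most $4$ essentially of finite type over a field of characteristic $p>5$.

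At this point the hypotheses of Corollary~\ref{$F$-regular implies strong dimension 4} are met for each $R_\fm$, and that corollary yields that $R_\fm$ is strongly $F$-regular. Since strong $F$-regularity can be verified locally at maximal ideals (see \cite[Corollary~4.15]{HHJAMS} and the discussion in the proof of Corollary~\ref{Corollary weak implies strong}), we conclude that $R$ itself is strongly $F$-regular. The proof is essentially just the composition of two results, so there is no real obstacle; the only subtlety worth mentioning is the passage between global and local $F$-regularity notions, which is handled by the standard reductions already used in the proof of Corollary~\ref{Corollary weak implies strong}.
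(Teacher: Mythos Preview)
Your proposal is correct and follows essentially the same approach as the paper: invoke \cite[Theorem~8.1]{HHTAMS} to upgrade weakly $F$-regular to $F$-regular, then apply Corollary~\ref{$F$-regular implies strong dimension 4}. The paper's proof is terser and omits the explicit localization and verification of the Cohen--Macaulay domain hypotheses that you spell out, but the underlying argument is identical.
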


\begin{proof}
The notions of weakly $F$-regular and $F$-regular are equivalent for rings of finite type over fields which have infinite transcendence degree over the prime field, \cite[Theorem~8.1]{HHTAMS}. Therefore the notions of weakly $F$-regular and strongly $F$-regular are equivalent for such rings by Corollary~\ref{$F$-regular implies strong dimension 4}.
\end{proof}

We end this section with some remarks concerning the assumptions of Theorem~\ref{theorem when are technical condtions met}. As of now, we can only equate $F$-regular and strongly $F$-regular rings which are $4$ dimensional and essentially of finite type over a ring of prime characteristic $p>5$, we can not equate weakly $F$-regular with strongly $F$-regular for such rings. Unlike the $3$ dimensional case, it is not clear at all if the property of being weakly $F$-regular localizes. We do however know the property of being $F$-rational localizes, which is all that is needed in the three dimensional case to invoke Lipman's results from \cite{LipmanRationalSingularities} to know symbolic Rees rings of pure height $1$ ideals are Noetherian on the punctured spectrum. In dimension $4$ we only know that $R$ is $F$-rational and $F$-split at $3$ dimensional points and it is unlikely that symbolic Rees rings of pure height $1$ will be Noetherian for such rings. See \cite{CutkoskyDuke} for an example of a $3$-dimensional rational singularity defined over $\mathbb{C}$ which admits pure height $1$ ideals whose symbolic Rees ring is non-Noetherian.

\section{$F$-signature and relative Hilbert-Kunz multiplicity}\label{section $F$-signature and relative Hilbert-Kunz multiplicity}

\subsection{Background on $F$-signature and Hilbert-Kunz multiplicity} We summarize some basic properties of Frobenius splitting numbers, $F$-signature, and Hilbert-Kunz multiplicity. For an introduction to these concepts we refer the reader to \cite{HunekeHK, PolstraTucker}. Let $(R,\fm,k)$ be a local $F$-finite domain of prime characteristic $p>0$ and Krull dimension $d$. For each $e\in \N$ let $a_e(R)$ be the largest rank of a free $R$-module $G$ for which there exists an onto $R$-linear map $F^e_*R\to G$. The $F$-signature of $R$ is the limit 
\[
s(R)=\lim_{e\to \infty}\frac{a_e(R)}{\rank_R(F^e_*R)},
\]
a limit which always exists by \cite[Main Result]{Tucker}. The ring $R$ is strongly $F$-regular if and only if $s(R)>0$ by \cite[Main Theorem]{AberbachLeuschke}. For each integer $e\in \N$ we denote by $I_e$ the $e$th Frobenius degeneracy ideal. Specifically,
\[
I_e=\{r\in R\mid \varphi(F^e_*r)\in \fm, \forall\varphi\in\Hom_R(F^e_*R,R)\}.
\]
The ideals $I_e$ satisfy the following properties:
\begin{enumerate}
\item $\fm^{[p^e]}\subseteq I_e$;
\item For each integer $e_0\in \N$, $I_e^{[p^{e_0}]}\subseteq I_{e+e_0}$;
\item $\displaystyle \frac{a_e(R)}{\rank(F^e_*R)}=\frac{\lambda(R/I_e)}{p^{ed}}$;
\item $\displaystyle s(R)=\lim_{e\to \infty}\frac{\lambda(R/I_e)}{p^{e\dim(R)}}$.
\end{enumerate}

Suppose $I\subseteq R$ is an $\fm$-primary ideal. The Hilbert-Kunz multiplicity of the ideal $I\subseteq R$ is the limit 
\[
\e(I)=\lim_{e\to \infty}\frac{\lambda(R/I^{[p^e]})}{p^{ed}},
\]
a limit which exists by \cite[Theorem~1.8]{Monsky}. By \cite[Proposition~2.1]{Kunz2} we have that for each $\fm$-primary ideal $I\subseteq R$,
\[
\frac{\lambda(R/I^{[p^e]})}{p^{ed}}=\frac{\lambda(F^e_*R/IF^e_*R)}{\rank(F^e_*R)}.
\]
Therefore the Hilbert-Kunz multiplicity of an $\fm$-primary ideal agrees with the limit
\[
\e(R)=\lim_{e\to \infty }\frac{\lambda(F^e_*R/IF^e_*R)}{\rank(F^e_*R)}.
\]
Suppose that $F^e_*R\cong R^{\oplus a_e(R)}\oplus M_e$. Then for each $\fm$-primary ideal $\lambda(F^e_*R/IF^e_*R)=a_e(R)\lambda(R/I)+\lambda(M_e/IM_e)$. If $I\subsetneq J$ are $\fm$-primary it is then easy to see that
\[
a_e(R)\lambda(J/I)\leq \lambda(F^e_*R/IF^e_*R)-\lambda(F^e_*R/JF^e_*R)
\]
and therefore for each pair of $\fm$-primary ideals $I\subsetneq J$ we have that
\[
s(R)\leq \frac{\e(I)-\e(J)}{\lambda(J/I)}.
\]
Work of the second author and Tucker show that $F$-signature of a local ring is realized as the infimum of relative Hilbert-Kunz multiplicities.

\begin{theorem}{\cite[Theorem~A]{PolstraTucker}}
If $(R,\fm,k)$ is an $F$-finite local ring, then
 \[
 s(R) = \inf_{\substack{I \subseteq J  \subseteq R, \;\lambda(R/I) < \infty  \\ I \neq J, \; \lambda(R/J) < \infty} }\frac{e_{HK}(I) - e_{HK}(J)}{\lambda(J/I)} = \inf_{\substack{I \subseteq R, \; \lambda(R/I) < \infty \\  x \in R, \; ( I : x )=\fm}} e_{HK}(I) - e_{HK}((I,x)).
 \]
\end{theorem}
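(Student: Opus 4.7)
The plan has three stages: first, establish the lower bound $\geq s(R)$ for both infima using an $F$-signature decomposition; second, show the two infima agree via composition series refinements; third, realize the infimum using the Frobenius degeneracy ideals $I_e$.

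For the lower bound, decompose $F^e_*R \cong R^{\oplus a_e(R)} \oplus M_e$ where $M_e$ has no free $R$-summand. For any $\fm$-primary $I \subsetneq J$,
\[
\lambda(F^e_*R/IF^e_*R) - \lambda(F^e_*R/JF^e_*R) = a_e(R)\lambda(J/I) + \lambda(M_e/IM_e) - \lambda(M_e/JM_e),
\]
where the last two terms contribute non-negatively because $M_e/IM_e$ surjects onto $M_e/JM_e$. Using the identity $\lambda(F^e_*R/IF^e_*R)/\rank(F^e_*R) = \lambda(R/I^{[p^e]})/p^{e\dim R}$, dividing by the rank and letting $e \to \infty$ yields $e_{HK}(I) - e_{HK}(J) \geq s(R)\,\lambda(J/I)$. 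Hence both infima are bounded below by $s(R)$.

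For the reduction to the second infimum, refine any chain $I \subsetneq J$ through a composition series $I = J_0 \subsetneq J_1 \subsetneq \cdots \subsetneq J_n = J$ with $J_{i+1} = (J_i, x_i)$ and $(J_i : x_i) = \fm$. Telescoping gives
\[
\frac{e_{HK}(I) - e_{HK}(J)}{\lambda(J/I)} = \frac{1}{n}\sum_{i=0}^{n-1}\bigl(e_{HK}(J_i) - e_{HK}((J_i, x_i))\bigr),
\]
exhibiting the first infimum as an average of quantities from the second, so the first infimum is $\geq$ the second; the reverse is immediate.

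For the final stage, for each $e\in\N$ pick $x_e \in R$ whose image is a nonzero socle element of $R/I_e$, so that $(I_e : x_e) = \fm$ and $\lambda((I_e,x_e)/I_e) = 1$. The key reduction is
\[
e_{HK}(I_e) - e_{HK}((I_e, x_e)) = \lim_{e' \to \infty} \frac{\lambda(R/(I_e^{[p^{e'}]} : x_e^{p^{e'}}))}{p^{e'\dim R}}.
\]
For any $\varphi \in \Hom_R(F^{e'}_*R, R)$ the identity $\varphi(F^{e'}_*(r x_e^{p^{e'}})) = x_e\,\varphi(F^{e'}_*r)$ combined with $(I_e : x_e) = \fm$ forces $(I_e^{[p^{e'}]} : x_e^{p^{e'}}) \subseteq I_{e'}$, which recovers the lower bound $\geq s(R)$. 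The main obstacle is the matching upper estimate: showing the length of $R$ modulo this colon ideal is asymptotic to $\lambda(R/I_{e'})$ modulo lower order in $p^{e'\dim R}$, so that the limit equals $\lim_{e'}\lambda(R/I_{e'})/p^{e'\dim R} = s(R)$. This will require the uniform convergence $\lambda(R/I_{e'})/p^{e'\dim R} \to s(R)$ together with a reflexivity/duality argument for $F^{e'}_*R$ that controls the discrepancy between the colon ideal and $I_{e'}$, thereby yielding $\inf_2 \leq s(R)$ and completing the proof.
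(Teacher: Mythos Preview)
The paper does not prove this statement; it is quoted verbatim as \cite[Theorem~A]{PolstraTucker} and used as a black box. So there is no ``paper's own proof'' to compare against, and I can only assess your proposal on its merits.

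Your first two stages are correct and standard: the lower bound via the splitting $F^e_*R\cong R^{\oplus a_e(R)}\oplus M_e$ is exactly the argument the paper records just before stating the theorem, and the composition-series reduction showing the two infima coincide is routine.

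The third stage, however, is not a proof but a sketch with the essential step missing. You correctly verify the containment $(I_e^{[p^{e'}]}:x_e^{p^{e'}})\subseteq I_{e'}$ via the identity $\varphi(F^{e'}_*(r x_e^{p^{e'}}))=x_e\,\varphi(F^{e'}_*r)$, but this only yields $e_{HK}(I_e)-e_{HK}((I_e,x_e))\geq s(R)$, which is the lower bound you already have from stage one. The content of the theorem is the \emph{upper} bound $\inf\leq s(R)$, and here you write only that it ``will require the uniform convergence \ldots\ together with a reflexivity/duality argument \ldots\ that controls the discrepancy,'' without saying what that argument is. There is no indication of how to bound $\lambda\bigl(R/(I_e^{[p^{e'}]}:x_e^{p^{e'}})\bigr)$ from above by something asymptotic to $\lambda(R/I_{e'})$, and naive containments go the wrong way. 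The actual argument in \cite{PolstraTucker} is more delicate than what you have outlined, and the phrase ``reflexivity/duality argument'' does not by itself supply it. As written, the proposal establishes only $s(R)\leq\inf$, which is the easy direction.
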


\subsection{$F$-signature and relative Hilbert-Kunz multiplicity}

Our proof of Theorem~\ref{Main theorem prime char invariants} begins with the following well known lemma concerning the Frobenius splitting numbers of a local ring. We refer the reader to \cite[Lemma~6.2]{PolstraTucker} for a direct proof.

\begin{lemma}\label{Lemma on Frobenius splitting numbers} Let $(R,\fm,k)$ be an $F$-finite local domain of prime characteristic $p>0$ and Krull dimension $d$. Suppose that $J_1\subsetneq R$ is a choice of canonical ideal, $0\neq x_1\in J_1$, $x_2,\ldots,x_d$ parameters on $R/J_1$, and $u\in R$ generates the socle mod $(J_1,x_2,\ldots, x_d)$. For each integer $t\in \N$ let $I_t=(x_1^{t-1}J_1,x_2^t,\ldots,x_d^t)$ and $u_t=u(x_1\cdots x_d)^{t-1}$. Then for each $e\in \N$ the sequence of ideals $\{(I_t^{[p^e]}:u_t^{p^e})\}_{t\in \N}$ forms an ascending chain of ideals which stabilizes at the Frobenius degeneracy ideal $I_e$. In particular, for each $e\in \N$ there exists a $t\in \N$ such that
\[
\frac{a_e(R)}{\rank(F^e_*R)}=\frac{\lambda(R/(I_t,u_t)^{[p^e]})-\lambda(R/(I_t,u_t)^{[p^e]})}{p^{ed}}.
\]
\end{lemma}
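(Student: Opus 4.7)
The plan is to prove the chain is ascending, identify its stable value with $I_e$, and then read off the length formula.

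For the chain property, I would first observe that $(x_1\cdots x_d)I_t\subseteq I_{t+1}$: this is immediate from generators, since $(x_1\cdots x_d)\cdot x_1^{t-1}J_1\subseteq x_1^tJ_1$ and $(x_1\cdots x_d)\cdot x_i^t\in I_{t+1}$. Combined with $u_{t+1}^{p^e}=(x_1\cdots x_d)^{p^e}u_t^{p^e}$, multiplying $ru_t^{p^e}\in I_t^{[p^e]}$ by $(x_1\cdots x_d)^{p^e}$ gives $ru_{t+1}^{p^e}\in\bigl((x_1\cdots x_d)I_t\bigr)^{[p^e]}\subseteq I_{t+1}^{[p^e]}$, establishing the chain property.

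For $(I_t^{[p^e]}:u_t^{p^e})\subseteq I_e$, recall $R/I_t$ is zero-dimensional Gorenstein with socle generator $u_t$, so $(I_t:_R u_t)=\fm$. Given $r$ with $ru_t^{p^e}\in I_t^{[p^e]}$, write $ru_t^{p^e}=\sum b_jg_j^{p^e}$ for generators $g_j$ of $I_t$. Then for any $\varphi\in\Hom_R(F^e_*R,R)$, using $R$-linearity with the twisted action $g\cdot F^e_*b=F^e_*(g^{p^e}b)$,
\[
u_t\varphi(F^e_*r)=\varphi(u_t\cdot F^e_*r)=\varphi\bigl(F^e_*(u_t^{p^e}r)\bigr)=\sum g_j\varphi(F^e_*b_j)\in I_t,
\]
so $\varphi(F^e_*r)\in(I_t:u_t)=\fm$ and $r\in I_e$.

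The hard direction, and the main obstacle, is $I_e\subseteq(I_t^{[p^e]}:u_t^{p^e})$ for some (hence all large) $t$. I would reduce to $R$ complete (lengths and colons behave well under faithful flat base change) and invoke Matlis duality. Identify
\[
F^e_*R\otimes_R E_R(k)=\varinjlim\bigl(F^e_*R\otimes_R R/I_t\bigr)=F^e_*\bigl(\varinjlim R/I_t^{[p^e]}\bigr),
\]
where the transition maps on the right are multiplication by $(x_1\cdots x_d)^{p^e}$; under this identification, for $\eta$ a socle generator of $E_R(k)$ represented by $[u_t]\in R/I_t$, the element $F^e_*r\otimes\eta$ corresponds at level $t$ to the class of $u_t^{p^e}r$ in $F^e_*(R/I_t^{[p^e]})$. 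The Artinian module $F^e_*R\otimes E_R(k)$ has Matlis dual $\Hom_R(F^e_*R,R)$, and the natural evaluation pairing sends $F^e_*r\otimes\eta$ to $\varphi\mapsto\varphi(F^e_*r)\eta$, which vanishes iff $\varphi(F^e_*r)\in\fm$ for every $\varphi$. Hence $F^e_*r\otimes\eta=0$ iff $r\in I_e$; translating to the direct-limit description, $r\in I_e$ forces $u_t^{p^e}r\in I_t^{[p^e]}$ for all sufficiently large $t$, and Noetherianity collapses the ascending chain to $I_e$ at a finite stage. The main technical subtlety is keeping straight the two $R$-module structures on $F^e_*R\otimes R/I_t$ during this identification.

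Finally, the length formula follows from the short exact sequence
\[
0\to R/(I_t^{[p^e]}:u_t^{p^e})\xrightarrow{\cdot u_t^{p^e}} R/I_t^{[p^e]}\to R/(I_t^{[p^e]},u_t^{p^e})\to 0:
\]
for $t$ beyond stabilization one gets $\lambda(R/I_e)=\lambda(R/I_t^{[p^e]})-\lambda(R/(I_t^{[p^e]},u_t^{p^e}))$, which combined with the identity $(I_t,u_t)^{[p^e]}=(I_t^{[p^e]},u_t^{p^e})$ and property~(3), namely $a_e(R)/\rank(F^e_*R)=\lambda(R/I_e)/p^{ed}$, yields the displayed identity (with the intended first numerator $\lambda(R/I_t^{[p^e]})$ in place of the obvious typographical duplication).
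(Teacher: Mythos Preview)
The paper does not supply its own proof of this lemma; it merely states that the result is well known and refers the reader to \cite[Lemma~6.2]{PolstraTucker}. Your argument is correct and is essentially the standard one: the ascending-chain step via $(x_1\cdots x_d)I_t\subseteq I_{t+1}$, the inclusion $(I_t^{[p^e]}:u_t^{p^e})\subseteq I_e$ from the socle identity $(I_t:u_t)=\fm$, and the reverse inclusion by recognizing $I_e$ as the annihilator of the image of the socle generator under $F^e_*R\otimes_R(-)$ applied to $E_R(k)=\varinjlim R/I_t$, then reading this vanishing in the direct limit. One small point worth making explicit is that the identification $(I_t:u_t)=\fm$ and the compatibility of the $u_t$ under the transition maps rely on $R$ being Cohen-Macaulay, which is implicit in the hypothesis that $R$ admits a canonical ideal and is how the paper sets up the direct-limit description of $E_R(k)$ earlier.
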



\begin{theorem}\label{Theorem Watanabe-Yoshida problem}
Let $(R,\fm,k)$ be a local strongly $F$-regular $F$-finite domain of prime characteristic $p>0$ such that some symbolic power of the anti-canonical ideal has analytic spread at most $2$ on the punctured spectrum. Then there exists an irreducible $\fm$-primary ideal $I$ and $u\in R$ which generates the socle mod $I$ such that for each integer $e\in \N$
\[
I_e=(I^{[p^e]}:u^{p^e}).
\]
It follows that for all $e\in \N$
\[
\frac{a_e(R)}{\rank(F^e_*R)}=\frac{\lambda(R/I^{p^e})-\lambda(R/(I,u)^{[p^e]})}{p^{e{\dim R}}}
\]
and therefore
\[
s(R)=\e(I)-\e((I,u)).
\]
\end{theorem}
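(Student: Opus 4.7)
The plan is to identify a pair $(I,u)$ from Lemma~\ref{Lemma on Frobenius splitting numbers} for which $I_e = (I^{[p^e]} : u^{p^e})$ holds for every $e$. Invoke Theorem~\ref{theorem when are technical condtions met} together with the parameter modifications from Corollary~\ref{Main prime characteritic corollary} to produce a canonical ideal $J_1 \subsetneq R$ with generic generator $x_1$, an integer $m$ such that $J_1^{(m)}$ is principal in codimension $2$, and a suitable system of parameters $x_1, x_2, \ldots, x_d$ satisfying the Ext-annihilation hypotheses needed by Proposition~\ref{proposition on how to make test ideals agree}. Let $u$ generate the socle of $R/(J_1, x_2, \ldots, x_d)$, and for each $t$ set $I_t = (x_1^{t-1}J_1, x_2^t, \ldots, x_d^t)$ and $u_t = u(x_1 \cdots x_d)^{t-1}$.

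Since $x_1, \ldots, x_d$ is an $R$-regular sequence, cancelling the common factor $x_1^{(t-1)p^e}$ reduces the membership $r \in (I_t^{[p^e]} : u_t^{p^e})$ to the Koszul-cohomology condition $r u^{p^e}(x_2 \cdots x_d)^{(t-1)p^e} \in (J_1^{[p^e]}, x_2^{tp^e}, \ldots, x_d^{tp^e})$; equivalently, the class $[ru^{p^e}] \in H^{d-1}(x_2^{p^e}, \ldots, x_d^{p^e};\, R/J_1^{[p^e]})$ dies by Koszul level $t$. By Lemma~\ref{Lemma on Frobenius splitting numbers}, the membership $r \in I_e$ corresponds to the statement that this class is eventually zero in the colimit $H^{d-1}_\fm(R/J_1^{[p^e]})$. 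Thus the theorem reduces to choosing a stabilization level $t$ that is uniform in $e$.

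To produce this uniform bound, we mirror the closing computation in the proof of Theorem~\ref{theorem how to make test ideals agree using lcbs}. For $r \in I_e$, set $i = \lfloor p^e/m \rfloor - 1$ so that $J_1^{[p^e]} \subseteq J_1^{(mi+1)}$; reduce the Koszul containment modulo $J_1^{(mi+1)}$; apply the lcb bound $\lcb_{d-1}(\ldots;\, R/J_1^{(mi+1)}) \leq i+1 \leq p^e$ from Proposition~\ref{proposition on how to make test ideals agree}(1), which via Lemma~\ref{lemma lcb power of elements} kills the reduced class within a constant number of steps of the $p^e$-th power sequence; multiply by $x_1^{2m-1}$ to lift from modulo $J_1^{(mi+1)}$ to modulo $J_1^{(p^e)}$; and successively apply Lemma~\ref{colonslemma}(1) and (2) to lift further to modulo $J_1^{[p^e]}$, producing $x_1^{3m-1}\, r u^{p^e}(x_2 \cdots x_d)^{p^e} \in (J_1, x_2^2, \ldots, x_d^2)^{[p^e]}$.

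The main obstacle is absorbing the extraneous factor $x_1^{3m-1}$: in the tight-closure setting of Theorem~\ref{theorem how to make test ideals agree using lcbs} this factor is absorbed by the tight closure operation, but here we require the literal equality $I_e = (I^{[p^e]}: u^{p^e})$ for each $e$. Because $R$ is strongly $F$-regular and $x_1 \neq 0$, there exists a fixed $e_0$ for which the map $R \to F^{e_0}_*R$, $1 \mapsto F^{e_0}_*(x_1^{3m-1})$, is pure; splitting this map and composing it with the stabilization argument at Frobenius level $e + e_0$ absorbs the stray $x_1^{3m-1}$ at the cost of inflating $t$ by a constant depending only on $e_0$ and $m$. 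This yields a fixed $t$ realizing $(I_t^{[p^e]} : u_t^{p^e}) = I_e$ for every $e$; setting $I = I_t$ and $u = u_t$, the socle-generator property of $u$ modulo $I$ gives the short exact sequence $0 \to R/I_e \xrightarrow{\cdot u^{p^e}} R/I^{[p^e]} \to R/(I,u)^{[p^e]} \to 0$, whence $\lambda(R/I_e) = \lambda(R/I^{[p^e]}) - \lambda(R/(I,u)^{[p^e]})$; dividing by $p^{ed}$, invoking $a_e(R)/\rank(F^e_*R) = \lambda(R/I_e)/p^{ed}$, and passing to the limit yields $s(R) = \e(I) - \e((I,u))$.
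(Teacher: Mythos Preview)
Your proof is correct and follows essentially the same route as the paper. Both arguments set up suitable parameters via Theorem~\ref{theorem when are technical condtions met} and Proposition~\ref{proposition on how to make test ideals agree}, then replay the computation of Theorem~\ref{theorem how to make test ideals agree using lcbs} (with $ru^{p^e}$ in place of $cr^{p^e}$) to obtain a containment $I_e \subseteq (I_t^{[p^e]}:u_t^{p^e}):x_1^{C}$ for a fixed $t$ and a fixed exponent $C$ independent of $e$. The only difference is in how the stray factor $x_1^{C}$ is absorbed: the paper notes that, using $I_e^{[p^{e_0}]}\subseteq I_{e+e_0}$, one has $x_1^{C}(u_t^{p^e}r)^{p^{e_0}}\in (I_t^{[p^e]})^{[p^{e_0}]}$ for \emph{every} $e_0$, hence $u_t^{p^e}r\in (I_t^{[p^e]})^*=I_t^{[p^e]}$ since all ideals are tightly closed; you instead fix a single $e_0$ for which $1\mapsto F^{e_0}_*x_1^{C}$ splits and apply the splitting. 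These are equivalent invocations of strong $F$-regularity. Note that your splitting argument actually lands you back in $(I_t^{[p^e]}:u_t^{p^e})$ with the \emph{same} $t$, so no inflation of $t$ occurs and that remark can be dropped.
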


\begin{proof} 
 Following the proof of Theorem~\ref{theorem how to make test ideals agree using lcbs} and utilizing Theorem~\ref{theorem when are technical condtions met}, Proposition~\ref{proposition on how to make test ideals agree}, and Proposition~\ref{proposition technical lcb}, if $J_1\subsetneq R$ is a choice of canonical ideal there exists $0\not=x_1\in J_1$, parameters $x_2,\ldots,x_d$ on $R/J_1$ and $m\in \N$ such that if we let $\{I_t\}, \{u_t\}$ be as in Lemma~\ref{Lemma on Frobenius splitting numbers} then for each integer $t\in \N$
\begin{align*}
(I_t^{[p^e]}:u_t^{p^e})=(J_1^{[p^e]},x_2^{tp^e},\cdots, x_d^{tp^e}):u(x_2\cdots x_d)^{(t-1)p^e}\subseteq \\
(J_1^{[p^e]},x^{4 p^e}_2,x_3^{4 p^e},x_4^{4p^e},\ldots, x_d^{4p^e}):x_1^{4m-1}u(x^3_2x^3_3(x_4\cdots x_d)^3)^{p^e}=(I_3^{[p^e]}:u_3^{p^e}):x_1^{4m-1}.
\end{align*}

Because the sequence of ideals $\{(I_t^{[p^e]}:u_t^{p^e})\}$ is an ascending chain of ideals which stabilizes at the Frobenius degeneracy ideal $I_e$ we see that there are containments
\[
(I_3^{[p^e]}:u_3^{p^e})\subseteq I_e\subseteq (I_3^{[p^e]}:u_3^{p^e}):x_1^{4m-1}
\]
We claim that the inclusion $(I_3^{[p^e]}:u_3^{p^e})\subseteq I_e$ is an equality for each $e\in\N$. Suppose $r\in I_e$, then $r^{p^{e_0}}\in I_e^{[p^{e_0}]}\subseteq I_{e+e_0}\subseteq (I_3^{[p^{e+e_0}]}:u_3^{p^{e+e_0}}):x_1^{4m-1}$. Therefore $x_1^{4m+1}(u_3^{p^e}r)^{p^{e_0}}\in (I_3^{[p^e]})^{[p^{e_0}]}$ for all $e_0\in \N$. Hence $u_3^{p^e}r\in (I_3^{[p^{e}]})^*=I_3^{[p^e]}$, i.e. $r\in (I_3^{[p^e]}:u_3^{p^e})$ as claimed.
\end{proof}

Suppose $(R,\fm,k)$ is an $F$-finite normal domain of Krull dimension $d$. Then for each $\fm$-primary ideal $I\subseteq R$ there is a real number $\beta_I$ such that $\lambda(R/I^{[p^e]})=\e(R)p^{ed}+\beta_Ip^{e(d-1)}+O(p^{e(d-2)})$ by \cite[Theorem~1]{HunekeMcDermottMonsky}.

\begin{corollary} Let $(R,\fm,k)$ be a local strongly $F$-regular $F$-finite domain of prime characteristic $p>0$, of Krull dimension $d$, and such that some symbolic power of the anti-canonical ideal has analytic spread at most $2$ on the punctured spectrum. Then there exists a real number $\tau\in \mathbb{R}$ such that
\[
\lambda(R/I_e)=s(R)p^{ed}+\tau p^{e(d-1)}+O(p^{e(d-2)}).
\]
\end{corollary}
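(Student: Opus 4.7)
The plan is to combine Theorem~\ref{Theorem Watanabe-Yoshida problem} with the second-order expansion of Hilbert--Kunz length functions due to Huneke--McDermott--Monsky. First I would apply Theorem~\ref{Theorem Watanabe-Yoshida problem} to produce an irreducible $\fm$-primary ideal $I$ together with a socle generator $u$ mod $I$ with the property that
\[
I_e=(I^{[p^e]}:u^{p^e})
\]
for every $e\in\N$. A strongly $F$-regular $F$-finite ring is automatically a normal Cohen--Macaulay domain, so the hypotheses of \cite[Theorem~1]{HunekeMcDermottMonsky} are in force for $R$.

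Next, from the short exact sequence
\[
0\longrightarrow R/(I^{[p^e]}:u^{p^e})\xrightarrow{\;\cdot u^{p^e}\;} R/I^{[p^e]}\longrightarrow R/(I,u)^{[p^e]}\longrightarrow 0
\]
I obtain the length identity
\[
\lambda(R/I_e)=\lambda(R/I^{[p^e]})-\lambda(R/(I,u)^{[p^e]}).
\]
By \cite[Theorem~1]{HunekeMcDermottMonsky} applied to each of the $\fm$-primary ideals $I$ and $(I,u)$, there exist real numbers $\beta_I$ and $\beta_{(I,u)}$ such that
\[
\lambda(R/I^{[p^e]})=\e(I)p^{ed}+\beta_I p^{e(d-1)}+O(p^{e(d-2)})
\]
and
\[
\lambda(R/(I,u)^{[p^e]})=\e((I,u))p^{ed}+\beta_{(I,u)} p^{e(d-1)}+O(p^{e(d-2)}).
\]

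Subtracting these two expansions and using Theorem~\ref{Theorem Watanabe-Yoshida problem} once more, which gives $s(R)=\e(I)-\e((I,u))$, produces
\[
\lambda(R/I_e)=s(R)p^{ed}+(\beta_I-\beta_{(I,u)})p^{e(d-1)}+O(p^{e(d-2)}),
\]
so the conclusion holds with $\tau:=\beta_I-\beta_{(I,u)}$. There is no real obstacle here: all the substantial work has already been done in Theorem~\ref{Theorem Watanabe-Yoshida problem}, and the only additional input is the existence of the second-order term in the Hilbert--Kunz expansion, which is supplied by \cite{HunekeMcDermottMonsky} since $R$ is a normal $F$-finite domain under our hypotheses.
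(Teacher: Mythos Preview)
Your proposal is correct and follows essentially the same approach as the paper: invoke Theorem~\ref{Theorem Watanabe-Yoshida problem} to write $\lambda(R/I_e)=\lambda(R/I^{[p^e]})-\lambda(R/(I,u)^{[p^e]})$, then apply \cite{HunekeMcDermottMonsky} to each term (using that strongly $F$-regular implies normal) and subtract. The paper's proof is terser but identical in substance.
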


\begin{proof}
By Theorem~\ref{Theorem Watanabe-Yoshida problem} there exists $\fm$-primary ideal $I\subseteq R$ and $u\in R$ such that 
\[
\frac{a_e(R)}{\rank(F^e_*R)}=\frac{\lambda(R/I_e)}{p^{ed}}=\frac{\lambda(R/I^{[p^e]})-\lambda(R/(I,u)^{[p^e]})}{p^{ed}}
\]
for all $e\in \N$. Every strongly $F$-regular local ring is normal and therefore the results of \cite{HunekeMcDermottMonsky} are applicable.
\end{proof}

\section{Questions}\label{section open problems}

Lyubeznik and Smith proved that if $R$ is an $F$-finite $\mathbb{N}$-graded ring then the finitistic test ideal and test ideal of $R$ agree, \cite[Corollary~3.4]{LyubeznikSmith}. This article shows equality of test ideals for local rings whenever a certain family of $\Ext$-modules are annihilated in a controlled way. It is therefore natural to ask when the $\Ext$-annihilation properties established in Theorem~\ref{theorem when are technical condtions met} hold for graded rings. For example, we ask the following:

\begin{question} Let $S=k[T_1,\ldots,T_n]$ be a polynomial ring over a field $k$ of prime characteristic $p>0$, $P\subseteq R$ a homogeneous prime ideal of height $h$, and $S=R/P$. Suppose that the Krull dimension of $R$ is $d$ and $J_1\subsetneq R$ a canonical ideal.  Does there exist an integer $m\in \N$ and parameters $x_2,\ldots ,x_d$ on $R/J_1$ such that
\[
(x^i_2,x^i_3,\ldots ,x^i_{j+2})\Ext^{h+j}_S(\Ext^{h+1}_S(R/J_1^{mi+1},S),S)=0
\]
for every integer $i \in \N$ and $2\leq j\leq d-2$?
\end{question}

Under mild hypotheses, this article equates the finitistic test ideal and test ideal of a ring under the assumption that the anti-canonical ideal has analytic spread at most $2$ on the punctured spectrum. For rings of Krull dimension at most $4$ this is equivalent to the anti-canonical algebra being Noetherian on the punctured spectrum.

\begin{question} Can the techniques of this article be extended to show equality of test ideals whenever the anti-canonical algebra of a ring is assumed to be Noetherian on the punctured spectrum?
\end{question}

The critical point of the argument where the analytic spread $2$ assumption is being used is in Claim~\ref{SES claim}. In Claim~\ref{SES claim} we find families of ideals which intersect principally, so that the higher $\Ext$-modules of the cyclic modules defined by these ideals vanish.

There are interesting connections between the theory of multiplier ideals in the birational geometry of complex varieties and test ideals of varieties defined over a field of prime characteristic. Suppose $R$ is an $F$-finite normal domain. Following the methods of \cite{HaraYoshida, Takagi} one can develop a tight closure theory of triples $(R,\Delta,\fa^t)$ where $\Delta\geq 0$ is an effective $\Q$-divisor, $\fa\subseteq R$ an ideal, and $t\geq 0$ a real number. Suppose that $K$ is the fraction field of $R$. Then for each $e\in \N$ consider the fractional ideal $R((p^e-1)\Delta)\subseteq K$ generated by nonzero elements $f\in K$ such that $\mbox{div}(f)+(p^e-1)\Delta$ is effective. For each $e\in \N$ we consider the extension of scalars functors $F^e_\Delta: \Mod(R)\to \Mod(R)$ sending a module $M\mapsto {^{e}R((p^e-1)\Delta)}\otimes_R M$. An element $m\in M$ is mapped to $F^e_\Delta(m)=m^{p^e}:=1\otimes m\in {^{e}R((p^e-1)\Delta)}\otimes_R M$. If $N\subseteq M$ are $R$-modules we say that an element $m$ is in the $(\Delta,\fa^t)$-tight closure of $N$, denoted by $N_M^{(\Delta,\fa^t)*}$, if there exists $c\in R^\circ$ such that the submodule $\fa^{\lceil tp^e\rceil}m$ of $M$ is contained in the kernel of the following maps for all $e\gg0$;
\[
M\to M/N\to F^{e}_{\Delta}(M/N)\xrightarrow{\cdot c}F^{e}_{\Delta}(M/N).
\]
The finitistic $(\Delta,\fa^t)$-tight closure of $N\subseteq M$ is denoted by $N^{(\Delta,\fa^t)*,fg}_M$ and is $\bigcup (N\cap M')_{M'}^{(\Delta,\fa^t)*}$ where the union is taken over all finitely generated submodules $M'$ of $M$. If $\Delta=0$ and $\fa=R$ then $(\Delta,\fa^t)$-tight closure agrees with the usual tight closure and finitistic $(\Delta,\fa^t)$-tight closure agrees with the usual finitistic tight closure.

\begin{question} To what extent can the results of this article be extended to show equality of test ideals of pairs or triples? Specifically, if $(R,\Delta,\fa^t)$ is a triple and $(R,\fm,k)$ is local, then when may we conclude that 
\[
0^{(\Delta,\fa^t)*}_{E_R(k)}=0^{(\Delta,\fa^t)*,fg}_{E_R(k)}?
\] 
\end{question}

For a partial answer to the above question see \cite[Theorem~2.8]{Takagi} for a proof that $0^{(\Delta,\fa^t)*}_{E_R(k)}=0^{(\Delta,\fa^t)*,fg}_{E_R(k)}$ when $\fa=R$ and $K_X+\Delta$ is assumed to be a $\Q$-Cartier divisor, where $K_X$ is a canonical divisor on $X=\Spec(R)$.

\begin{center}{Acknowledgements} \end{center}

We thank Chris Hacon, Karl Schwede, Anurag Singh, and Joe Waldron for valuable correspondence during the duration of this project. We thank Jonathan Monta\~{n}o for useful discussions concerning analytic spread and symbolic powers of ideals. We especially thank Linquan Ma for feedback on previous drafts of this article. We are remarkably grateful for Craig Huneke and Sarasij Maitra for detailed discussions concerning the material of the article which led to an improved exposition of the main arguments.

\bibliographystyle{alpha}
\bibliography{References}

\end{document}